\documentclass[11pt,a4paper,reqno]{article}
\usepackage[T1]{fontenc}
\usepackage[margin=2.5cm]{geometry}
\usepackage{microtype}
\usepackage{amsmath,amsfonts,amssymb,amsthm,mathrsfs}
\usepackage{mathtools}
\usepackage{booktabs}
\usepackage{graphicx}
\usepackage[colorlinks=true,linkcolor=cyan,citecolor=blue,urlcolor=blue]{hyperref}
\usepackage{enumitem}
\setlist{noitemsep,topsep=3pt}

\newtheorem{theorem}{Theorem}[section]
\newtheorem{lemma}[theorem]{Lemma}
\newtheorem{proposition}[theorem]{Proposition}
\newtheorem{corollary}[theorem]{Corollary}
\newtheorem{conjecture}[theorem]{Conjecture}
\newtheorem{question}[theorem]{Question}
\theoremstyle{definition}
\newtheorem{definition}[theorem]{Definition}

\theoremstyle{remark}
\newtheorem{remark}[theorem]{Remark}

\newcommand\U{{\mathrm U}}
\newcommand\Z{{\mathbb Z}}
\newcommand\C{{\mathbb C}}
\newcommand\R{{\mathbb R}}
\newcommand\Q{{\mathbb Q}}
\newcommand\E{{\mathbb E}}
\newcommand\Tr{{\mathrm{Tr}}}
\newcommand\Pbb{{\mathbb P}}
\newcommand\cA{{\mathsf C}}
\newcommand\Hb{{\mathsf H}}
\newcommand\Pcal{{\mathcal P}}
\newcommand\Dcal{{\mathcal D}}
\newcommand\one{{\mathbf 1}}
\newcommand\dd{{\mathrm d}}
\newcommand\ellpart{{\ell}}
\newcommand\Aut{{\mathrm{Aut}}}

\makeatletter
\newcommand{\subjclass}[2][2020]{%
\let\@oldtitle\@title%
\gdef\@title{\@oldtitle\footnotetext{#1 \emph{Mathematics subject classification.} #2}}%
}
\newcommand{\keywords}[1]{%
\let\@@oldtitle\@title%
\gdef\@title{\@@oldtitle\footnotetext{\emph{Key words and phrases.} #1.}}%
}
\makeatother

\title{Elliptic $b$-Hurwitz theory and Jack heat trace}

\author{Thibaut Lemoine\thanks{Universit\'e de Strasbourg, CNRS, UMR 7501 -- Institut de Recherche Math\'ematique Avanc\'ee, 7 rue Ren\'e Descartes, 67000 Strasbourg, France. thibaut.lemoine@math.unistra.fr}}

\subjclass[2020]{Primary 05E05, 14N10;
Secondary 14H30, 33C52, 58J50}
\keywords{Jack polynomials, $b$-Hurwitz numbers, elliptic Hurwitz theory, real branched coverings, Jack heat trace, asymptotic expansions}
\hypersetup{
pdftitle={Elliptic b-Hurwitz theory and Jack heat trace},
pdfauthor={Thibaut Lemoine}
}

\begin{document}

\maketitle

\begin{abstract}
We study a deformation of the central heat trace on $\mathrm U(N)$ obtained by deforming Schur polynomials to Jack polynomials, and prove that it admits an asymptotic expansion to arbitrary order. Its coefficients are governed by elliptic $b$-Hurwitz numbers, which are genus-one counterparts of the $b$-Hurwitz theory of Chapuy and Do\l\k{e}ga \cite{ChapuyDolega22}. We construct the associated elliptic $b$-Hurwitz theory by means of generalized coverings on a torus and identify it with the genus-one closure of the genus-zero simple $b$-Hurwitz theory. At $b=0$ the construction recovers ordinary elliptic Hurwitz theory, while at $b=1$ it gives an automorphism-weighted geometric interpretation of the connected and disconnected twisted elliptic Hurwitz numbers of Hahn--Markwig \cite{HahnMarkwig26}. Our results extend the topological expansion obtained in the classical case in \cite{LemMai25,LM2} and take the form of a coupling between chiral and antichiral elliptic $b$-Hurwitz generating functions.
\end{abstract}

\tableofcontents

\section{Introduction}

The present paper is motivated by two questions which, a priori, belong to rather different parts of mathematics. On the one hand, in the realm of asymptotic spectral theory, the central heat trace on $\U(N)$ is the trace of the heat kernel on $\U(N)$ on the space of square integrable central functions, and its large-$N$ asymptotic expansion was derived in \cite{LemMai25}. It was shown later in \cite{LM2} that the expansion involved the ordinary elliptic Hurwitz theory (and even elliptic Gromov--Witten theory, thanks to Okounkov--Pandharipande's Gromov--Witten/Hurwitz correspondence \cite{OkounkovPandharipande06}). Recall that, after Weyl integration, the central Laplace--Beltrami operator $\Delta_{\U(N)}$ acts on symmetric functions of the eigenangles in the space with density $|\Delta|^2$ as the radial Laplacian $\mathscr L_N$, where $\Delta(z_1,\ldots,z_N):=\prod_{1\leq i<j\leq N}(z_i-z_j)$ is the Vandermonde determinant, and an eigenbasis is given by the rational Schur functions. We deform this picture by replacing the Weyl density with the Jack density $|\Delta|^{2/\alpha}$ for some $\alpha>0$, the Schur eigenbasis with Jack Laurent polynomials, and the quadratic Casimir spectrum with its Jack analogue. The root-system part of the resulting operator is the compact type-$A_{N-1}$ Heckman--Opdam Laplacian with root multiplicity $2/\alpha$ \cite{RemlingRosler11}, naturally acting on the determinant-one torus associated with $\mathrm{SU}(N)$. The $\U(N)$ operator $\mathscr L_{\alpha,N}$ used here extends this type-$A_{N-1}$ operator by the one-dimensional determinant direction. Table~\ref{tab:jack-deformation} summarizes this deformation, which will be developed in Section~\ref{sec:jack-heat-trace}.

\begin{table}[htbp]
\centering
\small
\setlength{\tabcolsep}{4pt}
\renewcommand{\arraystretch}{1.25}
\begin{tabular}{@{}lccc@{}}
\toprule
& $\U(N)$ picture & Radial picture & Jack deformation \\
\midrule
Hilbert space
& $L^2(\U(N))^{\mathrm{Ad}}$
& $L^2_{\mathrm{sym}}\left(\mathbb T^N,|\Delta(e^{\mathrm i\theta})|^2\dd\theta\right)$
& $L^2_{\mathrm{sym}}\left(\mathbb T^N,|\Delta(e^{\mathrm i\theta})|^{2/\alpha}\dd\theta\right)$ \\
Differential operator
& $\Delta_{\U(N)}$
& $\mathscr L_N$
& $\mathscr L_{\alpha,N}$ \\
Eigenfunctions
& $\chi_\lambda$
& $s_\lambda(e^{\mathrm i\theta})$
& $J_{\lambda,N}^{(\alpha)}(e^{\mathrm i\theta})$ \\
Spectral parameter
& $c_{2,N}(\lambda)$
& $c_{2,N}(\lambda)$
& $\kappa_{\alpha,N}(\lambda)$ \\
\bottomrule
\end{tabular}
\caption{The radial realization of the central $\U(N)$ spectral problem and its Jack deformation.}
\label{tab:jack-deformation}
\end{table}
A natural first question is therefore the following.

\begin{question}\label{ques:spectral}
Does the large-rank expansion of the central heat trace on $\U(N)$ survive this deformation?
\end{question}

On the other hand, in the realm of enumerative geometry, Hurwitz theory lies at the intersection of branched-cover geometry, symmetric-group representation theory and integrable systems. The Frobenius character formula turns covering counts into character sums, double Hurwitz generating functions form Toda $\tau$-functions \cite{Okounkov00}, and completed cycles connect Hurwitz theory with Gromov--Witten theory \cite{OkounkovPandharipande06}. Goulden and Jackson observed that the Jack deformation interpolates algebraic regimes which, at the Schur and zonal specializations, correspond to different orientability phenomena, and conjectured that the coefficients in the parameter $b=\alpha-1$ should possess a positive combinatorial interpretation in which the power of $b$ measures nonorientability \cite{GouldenJackson96}.  Chapuy and Do\l\k{e}ga subsequently turned this philosophy into a $b$-Hurwitz theory of generalized branched coverings and nonorientable constellations \cite{ChapuyDolega22}, introducing in particular the notion of ``measure of nonorientability'' (MON). This deformation retains much of the structure that makes ordinary Hurwitz theory useful, leading to many significant results in the past few years: for instance, $b$-monotone Hurwitz numbers satisfy Virasoro constraints and, at the zonal specialization, a BKP hierarchy \cite{BonzomChapuyDolega23}, while refined topological-recursion phenomena have also been established \cite{ChidambaramDolegaOsuga26}.  Recent work of Fesler--Hahn--Karev--Markwig develops a generic-$b$ cut-and-join, Fock-space and tropical theory for double $b$-Hurwitz numbers \cite{FeslerHahnKarevMarkwig25}. All these works concern the deformation of the Hurwitz theory with target a sphere. The natural question is thus:

\begin{question}\label{ques:enumerative}
Is there an elliptic counterpart of the $b$-Hurwitz theory, where the base space is a torus instead of a sphere?
\end{question}

The elliptic target is the first place where this question is both especially natural and genuinely new. Classical elliptic Hurwitz generating series exhibit modular and quasimodular structures \cite{Dijkgraaf95,BlochOkounkov00,HahnIttersumLeid22}, and the enumeration of torus covers enters the geometry and dynamics of moduli spaces of Abelian differentials \cite{EskinOkounkov01}.  At the same time, genus one is characterized algebraically by a canonical trace operation: an oriented torus is obtained by identifying the two boundary circles of a cylinder, and in a two-dimensional Frobenius theory the corresponding amplitude is the trace of the cylinder propagator. There is already strong evidence for this construction: at $b=0$ it must reproduce ordinary elliptic Hurwitz theory, whereas at $b=1$ it should correspond to the twisted elliptic Hurwitz theory constructed recently by Hahn--Markwig \cite{HahnMarkwig26}.

This paper provides a simultaneous answer to both Questions~\ref{ques:spectral} and~\ref{ques:enumerative}: it proves an asymptotic expansion of the Jack-deformed central heat trace, constructs an elliptic $b$-Hurwitz theory, and shows that the relation between the expansion of the central heat trace on $\U(N)$ and classical elliptic Hurwitz theory extends to the deformed case. The main message can be summarized by the following identification:
\[
\begin{array}{c}
\text{Jack deformation of}\\ \text{elliptic Hurwitz theory}
\end{array}
\quad\longleftrightarrow\quad
\begin{array}{c}
\text{stable coefficients of the}\\
\text{Jack-deformed central heat trace}.
\end{array}
\]
\paragraph{Main results.}

For any real number $\alpha>0$, a cell $\square=(i,j)$ of an integer partition (seen as a Young diagram) has Jack content $c_\alpha(\square)=\alpha(j-1)-(i-1)$. The total Jack content $\mathsf C_{\alpha}(\lambda)=\sum_{\square\in\lambda}c_{\alpha}(\square)$ of a partition is the eigenvalue of the Jack cut-and-join operator $\mathcal D_\alpha$ on the Jack polynomial $J_\lambda^{(\alpha)}$. Assume now that $\alpha=1+b$ with $b\geq0$. Throughout, Hurwitz numbers and coverings are understood in the possibly disconnected sense unless explicitly stated otherwise. We define the \emph{elliptic (or genus-one) $b$-Hurwitz number of degree $d$ with $r$ labelled simple branch values} by
\[
\Hb_{1,b}(d,r)=\sum_{\lambda\vdash d}\mathsf C_{1+b}(\lambda)^r.
\]
Section~\ref{sec:elliptic-b-hurwitz} shows that this elementary Jack-content power sum admits three equivalent genus-one realizations.

First, let the torus $E_T=\C/(\Z+iT\Z)$ carry the reflection $\iota[z]=[\overline z]$, and fix labelled branch values $\mathbf x=(x_1,\ldots,x_r)$ on one boundary component of the quotient annulus.  Denote by $\mathfrak H_{d,r}(E_T,\iota;\mathbf x)$ the degree-$d$ real elliptic generalized coverings of Definition~\ref{def:real-elliptic-cover}.  Choosing an admissible spanning cut $a$ and cutting/capping produces a Chapuy--Do\l\k{e}ga generalized covering with equal distinguished profiles together with equivariant sewing data.  Combining its MON weight with normalized local sewing factors gives the weighted count $\#_{b,a}\mathfrak H_{d,r}(E_T,\iota;\mathbf x)$.

Second, write $\Hb_{0,b}(\mu,\nu,r)$ for the (genus-zero) $b$-Hurwitz numbers with distinguished profiles $\mu,\nu\vdash d$ and $r$ simple ramifications.  Sewing the distinguished profiles contracts them with the Jack scalar product: for $\mu=(1^{m_1}2^{m_2}\cdots)$, the factor is $z_\mu(1+b)^{\ellpart(\mu)}$, where $z_\mu=\prod_{j\geq1}j^{m_j}m_j!$.  Third, on the degree-$d$ symmetric-function space $\Lambda_d$, the same closure is the trace of the Jack cut-and-join operator.  Altogether:
\begin{theorem}[See Theorem~\ref{thm:geometric-gluing} and Propositions~\ref{prop:cylinder-closure} and~\ref{prop:cut-and-join-trace}]
For every $d,r\geq0$ and $b\geq0$,
\begin{equation}\label{eq:intro-three-realizations}
\begin{aligned}
\Hb_{1,b}(d,r)=\#_{b,a}\mathfrak H_{d,r}(E_T,\iota;\mathbf x)=\sum_{\mu\vdash d}z_\mu(1+b)^{\ellpart(\mu)}\Hb_{0,b}(\mu,\mu,r)=\Tr_{\Lambda_d}\left(\mathcal D_{1+b}^{r}\right).
\end{aligned}
\end{equation}
\end{theorem}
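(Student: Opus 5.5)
The plan is to prove the chain of equalities from right to left. Everything is anchored at the algebraic identity $\Tr_{\Lambda_d}(\mathcal D_{1+b}^r)=\Hb_{1,b}(d,r)$, and the middle and geometric expressions are then reduced to it.

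\emph{Step 1: the trace.} The Jack polynomials $\{J^{(1+b)}_\lambda\}_{\lambda\vdash d}$ form a basis of $\Lambda_d$ (over $\Q(b)$, or over $\R$ for fixed $b\ge 0$). Each is an eigenvector of $\mathcal D_{1+b}$ with eigenvalue $\mathsf C_{1+b}(\lambda)$. Hence $\mathcal D_{1+b}$ is diagonalizable on $\Lambda_d$, and $\Tr_{\Lambda_d}(\mathcal D_{1+b}^r)=\sum_{\lambda\vdash d}\mathsf C_{1+b}(\lambda)^r$. This is the definition of $\Hb_{1,b}(d,r)$ and gives Proposition~\ref{prop:cut-and-join-trace}.

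\emph{Step 2: cylinder closure.} I would compute the same trace in the power-sum basis $\{p_\mu\}_{\mu\vdash d}$ instead. This basis is orthogonal for the Jack scalar product, with $\langle p_\mu,p_\nu\rangle_{1+b}=\delta_{\mu\nu}z_\mu(1+b)^{\ellpart(\mu)}$. It follows that
\[
\Tr_{\Lambda_d}(\mathcal D_{1+b}^r)=\sum_{\mu\vdash d}\frac{\langle p_\mu,\mathcal D_{1+b}^r p_\mu\rangle_{1+b}}{z_\mu(1+b)^{\ellpart(\mu)}}.
\]
Next I invoke the Chapuy--Do\l\k{e}ga generating-function description of genus-zero $b$-Hurwitz numbers, in the form $\Hb_{0,b}(\mu,\nu,r)=[p_\mu q_\nu]\,\mathcal D^r$ applied to the Cauchy kernel $\sum_\lambda J_\lambda(p)J_\lambda(q)/j_\lambda$. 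Expanding this kernel in power sums expresses $\Hb_{0,b}(\mu,\mu,r)$ as a diagonal matrix coefficient of $\mathcal D_{1+b}^r$, up to explicit scalar-product factors. The one real task here is the normalization check: the factor $z_\mu(1+b)^{\ellpart(\mu)}$ in the statement must be exactly what turns the coefficient extracted from the kernel into the ordinary diagonal matrix entry $[p_\mu]\mathcal D^r p_\mu$. Summing over $\mu$ then gives Proposition~\ref{prop:cylinder-closure}. As a sanity check I would verify the case $r=0$: both sides should equal $p(d)$, the number of partitions of $d$.

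\emph{Step 3: geometry (Theorem~\ref{thm:geometric-gluing}), the main obstacle.} I would set up a bijection. Fix the admissible cut $a$. Cutting a real elliptic generalized covering along the preimage of $a$ and capping the two boundary circles yields a Chapuy--Do\l\k{e}ga covering of the sphere with two distinguished points of equal profile $\mu$, the same $r$ simple branch points, and sewing data identifying the boundary cycles. This map is inverted by regluing. I then need to show two things:
\begin{enumerate}
\item The MON weight of the torus cover equals the MON weight of the cut cover times local sewing factors.
\item Summing the sewing factors over all admissible regluings gives $z_\mu(1+b)^{\ellpart(\mu)}$. The expected mechanism is that each cycle of length $j$ contributes $j$ rotations and either an orientable or a twisted gluing, weighted $1$ and $b$ respectively, and that cycles of equal length may be permuted, producing the $m_j!$.
\end{enumerate}
The delicate point is that the normalization of the local sewing factors is chosen (in Definition~\ref{def:real-elliptic-cover}) precisely so that these counts come out right. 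Independence of the choice of $a$ then follows from the equality with the cut-independent quantity of Step 2. I would first verify the weighting in the extreme cases $b=0$ (ordinary elliptic Hurwitz theory, torus covers counted by $\sum_\lambda \mathsf C_1(\lambda)^r$) and $b=1$ (Hahn--Markwig) before handling general $b$.
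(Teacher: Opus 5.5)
Your Steps~1 and~2 are correct and match the paper. The trace is read off in the Jack eigenbasis. The closure formula is that same trace computed in the orthogonal power-sum basis, where the Chapuy--Do\l\k{e}ga kernel gives $z_\mu(1+b)^{\ellpart(\mu)}\Hb_{0,b}(\mu,\mu,r)=[p_\mu]\Dcal_{1+b}^r p_\mu$; the paper phrases this as contracting the two kernel boundaries with the Jack scalar product.

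The gap is in Step~3.

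\emph{Item~1 is a definition, not a claim.} There is no intrinsic MON weight on a real elliptic cover; a cut-independent one is exactly Conjecture~\ref{conj:cut-independent-elliptic-weight}. The paper \emph{defines} $w_{b,a}(F):=w_b(C)s_b(g)$ through the cut, and the sewing factors come from \eqref{eq:boundary-sewing-rule}, not from Definition~\ref{def:real-elliptic-cover}. What must be proved is that this weight is well defined and sums correctly.

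\emph{Your mechanism for item~2 only covers one case.} Weight $1$ for an orientable regluing and $b$ for a twisted one works only when the two matched circles lie in a single orientable component at the moment of sewing (Lemma~\ref{lem:boundary-sewing-topology}(ii)). If they lie in one nonorientable component, all $2j$ identifications leave it nonorientable. If they lie in distinct components, both orientation types have the same topological effect. In both cases ``twisted'' is not an isomorphism-invariant notion, and a $1$/$b$ rule of your kind gives $2j$ or $2jb$ instead of $j(1+b)$. The paper assigns $(1+b)/2$ to every identification in these cases. In the distinct-component case this is exactly the change in the component normalization $(2/(1+b))^{c}$ built into $w_b$. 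With these factors, $\sum_{h\in\operatorname{Iso}_{\mathrm{eq}}(B^+,B^-)}s(h)=j(1+b)$ holds in every case.

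\emph{The total does not factor into independent per-cycle contributions.} Which case occurs for a given orbit depends on the sewings already performed. The paper handles this in three steps:
\begin{itemize}
\item fix an order of the positive-end orbits;
\item use that the local sum $j(1+b)$ is the same for every history, so backward induction gives $\prod_j m_j(\mu)!\,[j(1+b)]^{m_j(\mu)}=z_\mu(1+b)^{\ellpart(\mu)}$ for each fixed order;
\item average over orders as in \eqref{eq:order-averaged-sewing-weight}, so that $s_b$ is $\Aut(C)$-invariant and $w_{b,a}$ descends to isomorphism classes of covers.
\end{itemize}

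\emph{Regluing is not a bijection on the nose.} Isomorphism classes of covers correspond to $\Aut(C)$-orbits in $\operatorname{Sew}(C)$ (Proposition~\ref{prop:cut-cap}). To match the weights $1/|\Aut(F)|$ you need three further inputs:
\begin{itemize}
\item the identification $\Aut(F)\simeq\operatorname{Stab}_{\Aut(C)}(g)$;
\item the weighted orbit--stabilizer formula;
\item the unlabelled normalization $\Hb_{0,b}(\mu,\nu,r)=\sum_{[C]}w_b(C)/|\Aut(C)|$ of Lemma~\ref{lem:CD-unlabelled-normalization}.
\end{itemize}
These, together with the case-by-case sewing factors, are what make the geometric equality hold, and your sketch omits all of them.
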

At the zonal specialization $b=1$, this provides the geometric realization proposed by Hahn--Markwig \cite[Remark~5]{HahnMarkwig26}; see Proposition~\ref{prop:twisted-real-monodromy}.

We next turn back to Question~\ref{ques:spectral}.  Denote by
\[
Z_{\alpha,N}(t)=\Tr(e^{\frac{t}{2}\mathscr L_{\alpha,N}})
\]
the \emph{Jack heat trace}, seen as the $\alpha$-deformation of the central heat trace on $\U(N)$ discussed earlier.  Its large-rank expansion is naturally written in powers of $1/(N+\alpha-1)$ rather than $1/N$, the two scales coinciding only in the undeformed case $\alpha=1$.

\begin{theorem}[See Theorem~\ref{thm:heat-expansion}]\label{thm:heat-expansion-informal}
For every $t>0$ and $\alpha>0$, there are explicit coefficients $(a_r(\alpha,t))_{r\geq0}$ such that, for every $p\geq0$,
\begin{equation}
Z_{\alpha,N}(t)=\sum_{r=0}^p\frac{a_r(\alpha,t)}{(N+\alpha-1)^r} +O_{t,\alpha,p}\left((N+\alpha-1)^{-p-1}\right).
\end{equation}
\end{theorem}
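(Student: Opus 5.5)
The plan follows the classical argument of \cite{LemMai25,LM2}: parametrize the spectrum, split off a part of the eigenvalue that does not depend on $N$, expand the rest in powers of $1/(N+\alpha-1)$, and control the error with a spectral gap estimate. I have not verified the exact normalization of $\kappa_{\alpha,N}$ here. The plan assumes that, after the time rescaling built into $\mathscr L_{\alpha,N}$, the Jack eigenvalue has the shape written below.

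\textbf{Step 1: coupled-partition parametrization of the spectrum.} Since the Jack Laurent polynomials $J^{(\alpha)}_{\lambda,N}$ form an orthogonal eigenbasis, we have $Z_{\alpha,N}(t)=\sum_{\lambda}e^{-\frac t2\kappa_{\alpha,N}(\lambda)}$, summed over nonincreasing $\lambda\in\Z^N$. I will write each such $\lambda$ as a pair of partitions $(\mu,\nu)$ with $\ell(\mu)+\ell(\nu)\le N$, namely the positive and negative parts, shifted by an integer $n$ times $(1^N)$. The first computation is to expand $\kappa_{\alpha,N}(\lambda)$ in these coordinates. I expect the form
\[
\kappa_{\alpha,N}(\lambda)=A\,(|\mu|+|\nu|)+B\,(n,|\mu|-|\nu|)+\frac{2}{N+\alpha-1}\bigl(\mathsf C_\alpha(\mu)+\mathsf C_\alpha(\nu)\bigr).
\]
Here $A$ is independent of $N$. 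The term $B$ is a quadratic form in $n$ and in the charge $|\mu|-|\nu|$; it is the determinant direction, and summing it over $n$ produces a theta-type factor. The key point to check is that the $\alpha$-deformed $\rho$-shift $\tfrac{\alpha}{2}(N+1-2i)$, combined with the Jack contents, is exactly what makes $N+\alpha-1$ (and not $N$) the natural scale. This is a direct computation with $\sum_i \lambda_i(\alpha\lambda_i+N+1-2i)$-type formulas.

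\textbf{Step 2: formal expansion.} I will separate the exponential factor $\exp\bigl(-\tfrac{t}{N+\alpha-1}(\mathsf C_\alpha(\mu)+\mathsf C_\alpha(\nu))\bigr)$ and Taylor-expand it to order $p$. Collecting powers of $(N+\alpha-1)^{-1}$, the coefficient $a_r(\alpha,t)$ becomes a sum over $d_1,d_2\ge0$ of
\[
e^{-\frac t2 A(d_1+d_2)}\,\Theta_{d_1-d_2}(t)\sum_{r_1+r_2=r}\frac{(-t)^r}{r_1!\,r_2!}\,\Hb_{1,b}(d_1,r_1)\,\Hb_{1,b}(d_2,r_2),
\]
with $b=\alpha-1$ and $\Theta$ the theta factor from Step 1. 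This is the chiral/antichiral coupling announced in the abstract. It uses $\sum_{\mu\vdash d}\mathsf C_\alpha(\mu)^{r}=\Hb_{1,b}(d,r)$ and its binomial mixtures. The coefficients are finite because $|\mathsf C_\alpha(\mu)|\le \max(\alpha,1)\,d^2$ and $p(d)\le e^{c\sqrt d}$, so each series converges absolutely thanks to $e^{-tAd/2}$.

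\textbf{Step 3: remainder estimates.} This is the main obstacle. There are two error sources.
\begin{enumerate}
\item The constraint $\ell(\mu)+\ell(\nu)\le N$ is dropped in the formal series. Violating it forces $d_1+d_2\ge N$, so this error is $O(e^{-ctN})$.
\item The Taylor remainder of $e^x$ is bounded by $|x|^{p+1}e^{|x|}/(p+1)!$ with $x=O(d^2/N)$. This is harmless only when $d\le\varepsilon N$.
\end{enumerate}
I will therefore split the sum at $d_1+d_2=\varepsilon N$.

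For the \emph{small-degree part}, the remainder is at most $(N+\alpha-1)^{-p-1}\sum_d d^{2p+2}p(d)^2e^{-(tA/2-C\varepsilon)d}$. This is $O\bigl((N+\alpha-1)^{-p-1}\bigr)$ once $\varepsilon$ is small enough in terms of $t$ and $\alpha$.

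For the \emph{large-degree part}, I need a uniform lower bound on the full exponent, namely $\kappa_{\alpha,N}(\lambda)\ge c\,(|\mu|+|\nu|)+(\text{the }n\text{-part})$ with $c>0$ independent of $N$. I would obtain it by writing $\mathsf C_\alpha(\mu)\ge -\sum_i\binom{\mu'_i}{2}$ and using $\mu'_i\le N$. Then the term $\frac{2}{N+\alpha-1}\sum_i\binom{\mu'_i}{2}$ is at most $\frac{N-1}{N+\alpha-1}|\mu|$. This is strictly less than $A|\mu|$ when $\alpha>0$, and it is precisely here that the shift by $\alpha-1$ should help. Given this bound, both the true and the truncated large-degree tails are $O(e^{-c'\varepsilon tN})$, which is negligible. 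Combining the pieces gives the expansion with error $O_{t,\alpha,p}\bigl((N+\alpha-1)^{-p-1}\bigr)$.
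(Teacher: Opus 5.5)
Your architecture (exact decomposition, Taylor expansion below a degree cutoff, coercive tail bound above it) matches the paper's. However, the coercivity step in Step~3 fails, and that step is exactly what makes the remainder uniform.

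The $N$-independent coefficient of $|\mu|+|\nu|$ is $A=1$, so your estimate only yields $\kappa_{\alpha,N}\geq\bigl(1-\frac{N-1}{N+\alpha-1}\bigr)|\mu|+\cdots=\frac{\alpha}{N+\alpha-1}|\mu|+\cdots$. The gap is therefore $O(1/N)$; the shift by $\alpha-1$ only changes it from $1/N$ to $\alpha/(N+\alpha-1)$. The resulting large-degree bound is at least $\sum_{d\geq\varepsilon N}p(d)e^{-t\alpha d/(2(N+\alpha-1))}$, which grows exponentially in $N$.

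No sharper estimate can rescue this while one partition may have length close to $N$. Reading $\lambda=(1^{N-1},0)$ as $\mu=(1^{N-1})$, $\nu=\varnothing$, $n=0$ gives $|\mu|=N-1$, yet $\kappa_{\alpha,N}(\lambda)=(\alpha+1)(N-1)/(N+\alpha-1)\leq\alpha+1$. Separately, your parametrization lacks a rule fixing $n$: with $\ell(\mu)+\ell(\nu)\leq N$ and $n$ free, every $n\in\Z$ decomposes the same $\lambda$.

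The missing idea is the balanced stable coordinates: set $m:=\lambda_{A_N+1}$, so that $\ell(\mu)\leq A_N$ and $\ell(\nu)\leq B_N$ with $A_N,B_N\sim N/2$. Then $\frac{2}{N+\alpha-1}n(\mu)\leq\frac{A_N-1}{N+\alpha-1}|\mu|\approx\frac12|\mu|$, which leaves a uniform gap. The paper obtains $\kappa_{\alpha,N}\geq\widetilde\alpha\bigl[\frac12(|\mu|+|\nu|)+\bigl(m+\frac{|\mu|-|\nu|}{N}\bigr)^2\bigr]$ by combining two ingredients:
the comparison $\kappa_{\alpha,N}\geq\min\{\alpha,\alpha^{-1}\}\kappa_{1,N}$, which uses $\sum_i(N+1-2i)\lambda_i\geq0$; and
the Schur domination estimate of \cite{LemMai25}, in which the charge cross term is handled by completing the square.
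The $m$-sum is then bounded via $\sup_x\sum_m e^{-c(m+x)^2}<\infty$.

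A second problem is the shape you assumed in Step~1. In these coordinates $\kappa_{\alpha,N}=|\mu|+|\nu|+\alpha m^2+\frac{2}{N+\alpha-1}F_\alpha$, with $F_\alpha=\mathsf C_\alpha(\mu)+\mathsf C_\alpha(\nu)+\alpha m(|\mu|-|\nu|)-\frac{\alpha(\alpha-1)}{2}m^2$. So the charge coupling and a correction to the Gaussian in $m$ both sit at order $1/(N+\alpha-1)$. There is no $N$-independent factor $\Theta_{d_1-d_2}(t)$, and expanding only the content exponential gives wrong coefficients. For instance, the $m^2$ correction adds $-t(\alpha-1)\partial_t\log\Theta_\alpha(t)\cdot\Theta_\alpha(t)/\phi(q_t)^2$ to $a_1$. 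Likewise, $\alpha^2\E[M^2(|\mu|-|\nu|)^2]\neq0$ feeds into $a_2$ even at $\alpha=1$.

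Expanding all of $\exp\bigl(-tF_\alpha/(N+\alpha-1)\bigr)$ gives the correct coefficients $a_r=\frac{\Theta_\alpha(t)}{\phi(q_t)^2}\frac{(-t)^r}{r!}\E[F_\alpha^r]$. Your cutoff $|\mu|+|\nu|\leq\varepsilon N$ still works for the Taylor remainder. On that region, $\frac{t}{N+\alpha-1}|F_\alpha|\leq\frac{t\alpha}{4}m^2+C\varepsilon t(|\mu|+|\nu|)+C$ for large $N$, so $e^{|x|}$ is absorbed by $q_t^{|\mu|+|\nu|}$ and the discrete Gaussian. The paper uses the cutoff $N^{1/4}$ instead, where the middle term disappears.
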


The coefficients admit a probabilistic representation in terms of two random partitions and a discrete Gaussian charge, as explained in Section~\ref{subsec:probabilistic-representation}.  This is the direct analogue of the probabilistic representation used in \cite{LemMai25,LM2} and is the key input for the uniform control of the remainder.  The two sides of the paper meet in the final main result.  Although the preceding expansion is valid for every $\alpha>0$, its enumerative interpretation uses the specialization $\alpha=1+b\geq1$.  From now on we express the resulting formulas entirely in terms of $b$.  Set $q_t=e^{-t/2}$ and consider the generating function
\[
\mathscr H_b(q;\hbar):=\sum_{r\geq0}\sum_{d\geq0}q^d\frac{\hbar^r}{r!}\Hb_{1,b}(d,r).
\]
Let $M$ be a random integer with discrete Gaussian distribution:
\[
\Pbb(M=m)=\frac{e^{-\frac t2(1+b)m^2}}{\Theta_{1+b}(t)},\qquad\Theta_{1+b}(t):=\sum_{m\in\Z}e^{-\frac t2(1+b)m^2}.
\]
We define the formal chiral--antichiral generating series by
\[
\mathcal Z_{b,t}(x):=\Theta_{1+b}(t)\E_M\left[e^{\frac12 b(1+b)txM^2}\mathscr H_b(q_t e^{-(1+b)tMx};-tx)\mathscr H_b(q_t e^{(1+b)tMx};-tx)\right].
\]
For a formal series $A(x)=\sum_{r\geq0}a_rx^r$, write $[A]_{\leq p}(x):=\sum_{r=0}^pa_rx^r$ for its truncation through degree $p$.

\begin{theorem}[See Theorem~\ref{thm:controlled-chiral-antichiral}]
For every $t>0$, $b\geq0$ and $p\geq0$,
\begin{equation}\label{eq:controlled-master-expansion}
Z_{1+b,N}(t) =\bigl[\mathcal Z_{b,t}\bigr]_{\leq p}\left(\frac{1}{N+b}\right) +O_{t,b,p}\left((N+b)^{-p-1}\right).
\end{equation}
The remainder is locally uniform for $(t,b)\in(0,\infty)\times[0,\infty)$.
\end{theorem}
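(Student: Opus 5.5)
The plan is to start from the exact spectral sum for $Z_{1+b,N}(t)$ and reorganize it into the chiral–antichiral form of Theorem~\ref{thm:heat-expansion}. The proof then has two parts: an exact algebraic identity followed by a Taylor expansion, and a tail estimate.

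\emph{Step 1: parametrize the spectrum.} Every eigenfunction $J^{(\alpha)}_{\lambda,N}$ is indexed by a signature $\lambda=(\lambda_1\ge\dots\ge\lambda_N)\in\Z^N$. I would use the standard bijection $\lambda\leftrightarrow(\mu,\nu,m)$. Here $m\in\Z$ is a central charge, and $\mu,\nu$ are partitions with $\ell(\mu)+\ell(\nu)\le N$, describing respectively the positive and the (reversed) negative parts of $\lambda-m(1^N)$. The normalization of $m$ should be the one that makes the quadratic part in $m$ diagonal. I would then rewrite the Jack Casimir $\kappa_{\alpha,N}(\lambda)$, with $\alpha=1+b$, in terms of $|\mu|,|\nu|,\mathsf C_{\alpha}(\mu),\mathsf C_\alpha(\nu),m$. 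After the scaling built into $\mathscr L_{\alpha,N}$, the target is an identity of the form
\[
\tfrac t2\kappa = \tfrac t2(1+b)m^2\;+\;\tfrac t2(|\mu|+|\nu|)\;+\;\tfrac{t}{N+b}\Bigl(\mathsf C_{1+b}(\mu)+\mathsf C_{1+b}(\nu)\Bigr)\;\pm\;\tfrac{(1+b)tm}{N+b}\bigl(|\mu|-|\nu|\bigr)\;-\;\tfrac{b(1+b)tm^2}{2(N+b)},
\]
with signs to be fixed by the paper's conventions. The exact constants come from $\sum_{\square}c_\alpha(\square)$ bookkeeping, together with the shift $N\mapsto N+\alpha-1$, which absorbs the $\rho$-vector contribution $\alpha(j-1)-(i-1)$ summed over rows. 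This step is purely algebraic and is where the scale $1/(N+b)$ and the factor $e^{\frac12 b(1+b)txM^2}$ originate.

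\emph{Step 2: the formal identity.} Put $x=1/(N+b)$ and drop the constraint $\ell(\mu)+\ell(\nu)\le N$. Then $\sum_{\mu\vdash d}e^{-tx\mathsf C_{1+b}(\mu)}$ expanded in $x$ is exactly $\sum_r\frac{(-tx)^r}{r!}\Hb_{1,b}(d,r)$, by the definition of $\Hb_{1,b}$. Summing with the weight $q_t^{d}e^{\mp(1+b)tmxd}$ reproduces $\mathscr H_b(q_te^{\mp(1+b)tMx};-tx)$. The Gaussian sum over $m$ gives $\Theta_{1+b}(t)\E_M$. Hence the unconstrained sum equals $\mathcal Z_{b,t}(x)$ coefficientwise, and this also identifies the $a_r(1+b,t)$ of Theorem~\ref{thm:heat-expansion} with the coefficients of $\mathcal Z_{b,t}$.

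\emph{Step 3: remainder, which is the main obstacle.} Two errors have to be controlled:
\begin{enumerate}
\item the discarded terms with $\ell(\mu)+\ell(\nu)>N$;
\item the Taylor remainder of order $p+1$ of $\exp\bigl(-tx(\mathsf C(\mu)+\mathsf C(\nu))\pm(1+b)tmx(|\mu|-|\nu|)+\tfrac12b(1+b)txm^2\bigr)$.
\end{enumerate}
The danger is that $|\mathsf C_{1+b}(\mu)|\le(1+b)d^2/2$, so the exponent is not uniformly small. I would split at $d=|\mu|+|\nu|\le \varepsilon N$ and $|m|\le \varepsilon N$.

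In the small region the exponent is $O(\varepsilon t d+\varepsilon t|m|)$. This is dominated by $q_t^{d}e^{-\frac t2(1+b)m^2}$ for $\varepsilon$ small. The Taylor remainder is then bounded by $x^{p+1}$ times
\[
\sum q_t^{(1-\varepsilon')d}e^{-c m^2}\,\mathrm{poly}(d,m),
\]
where $\mathrm{poly}(d,m)$ is a polynomial in $d$ and $m$. This sum converges by $p(d)\le e^{C\sqrt d}$.

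In the large region I would not expand at all. Instead I would use the exact lower bound $\kappa_{\alpha,N}(\lambda)\ge c\,N^{-1}\!\cdot\!\min(\cdot)$, or, more simply, the probabilistic representation of Section~\ref{subsec:probabilistic-representation}, to show that both the true and the formal contributions are $O(e^{-c N})$. I expect this to be the hardest part. Since it is exactly the uniform control already used in the proof of Theorem~\ref{thm:heat-expansion}, I would invoke that estimate directly.

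Local uniformity in $(t,b)\in(0,\infty)\times[0,\infty)$ follows because all constants above depend continuously on $t$ and $b$, with $t$ bounded away from $0$.
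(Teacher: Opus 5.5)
Your proposal is correct and is essentially the paper's proof. The paper observes, exactly as in your Step~2, that by \eqref{eq:master-coefficients} the truncation $[\mathcal Z_{b,t}]_{\le p}(1/(N+b))$ coincides term by term with the truncated sum in Theorem~\ref{thm:heat-expansion} at $\alpha=1+b$, and then imports that theorem's remainder bound and its local uniformity, which is where your Step~3 also ends up.

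One correction to Step~1: the parametrization is not ``$\ell(\mu)+\ell(\nu)\le N$'' with $m$ chosen to make something diagonal, since that description does not determine $m$. It is $m=\lambda_{A_N+1}$ with $\ell(\mu)\le A_N$, $\ell(\nu)\le B_N$ and $A_N,B_N\sim N/2$, and this balance is essential: the coercivity bound \eqref{eq:Jack-lower} behind your tail estimate fails for unbalanced choices.
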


This fixed-order formulation is not an artifact of the proof: in fact, we will see in Proposition~\ref{prop:master-zero-radius} that, for every $t>0$ and $b\geq0$, the formal series $\mathcal Z_{b,t}(x)$ has radius of convergence zero.  Consequently, \eqref{eq:controlled-master-expansion} cannot be upgraded to an identity with a convergent power series in $1/(N+b)$: its natural meaning is an asymptotic identification to arbitrary fixed order.

At $\alpha=1$, the central heat trace is, up to the normalization of time and of the quadratic Casimir, the genus-one $\U(N)$ Yang--Mills partition function, and its expansion studied in \cite{LemMai25,LM2} corresponds to a particular case of gauge/string duality, see also \cite{GrossTaylor93,Dijkgraaf95,Lem26survey}.  For generic $\alpha$, we do not claim any gauge-theoretic interpretation in this paper.

\section{The Jack-deformed central heat trace}\label{sec:jack-heat-trace}

In this section, we introduce the Jack deformation of the central heat trace on $\U(N)$ and prove its large-$N$ expansion using the random-partition techniques developed in \cite{LemMai25,LM2}.  A stable representation of the highest weights of $\U(N)$ expresses the deformed heat trace as the expectation of an explicit functional of two random partitions and a random integer; deviation bounds then yield an asymptotic expansion with a controlled remainder.

\subsection{The deformed radial Laplacian and its spectrum}\label{subsec:radial-spectrum}

Consider the unitary group $\U(N)$ with the metric induced by the inner product $\langle X,Y\rangle=N\Tr(XY^*)$ on its Lie algebra $\mathfrak u(N)$, and denote by $\Delta_{\U(N)}$ the associated Laplace--Beltrami operator. Its radial part is the differential operator on the maximal torus
\[
\mathbb T^N=\{(e^{i\theta_1},\ldots,e^{i\theta_N}),\ (\theta_1,\ldots,\theta_N)\in[0,2\pi]^N\}
\]
defined on eigenangles by
\[
\mathscr L_N=\frac1N\sum_{i=1}^N\frac{\partial^2}{\partial\theta_i^2}+\frac1N\sum_{i<j}\cot\left(\frac{\theta_i-\theta_j}{2}\right)\left(\frac{\partial}{\partial \theta_i}-\frac{\partial}{\partial \theta_j}\right),
\]
see, e.g., \cite[Prop.~12.5.1]{Far08} (the definition there changes by a scaling factor due to the metric normalization). The eigenvalues of either $\Delta_{\U(N)}$ or $\mathscr L_N$ are indexed by the set $\widehat{\U(N)}=\{\lambda=(\lambda_1,\ldots,\lambda_N)\in\Z^N:\ \lambda_1\geq\ldots\geq\lambda_N\}$ of highest weights, and given by $-c_{2,N}(\lambda)$ for $\lambda\in\widehat{\U(N)}$, where
\[
c_{2,N}(\lambda)=\frac{1}{N}\sum_{i=1}^N\lambda_i(\lambda_i+N+1-2i)
\]
is the quadratic Casimir eigenvalue associated with $\lambda$. The Hilbert space of square-integrable central functions $L^2(\U(N))^{\mathrm{Ad}}$ can be realized as a space of functions on $\mathbb T^N$, thanks to the Weyl integration formula: one has
\[
L^2(\U(N))^{\mathrm{Ad}}\simeq L^2_{\mathrm{sym}}(\mathbb T^N,\vert\Delta(e^{i\theta})\vert^2\mathrm d\theta),
\]
where $\Delta(z)=\prod_{i<j}(z_i-z_j)$ is the classical Vandermonde determinant. The \emph{central heat trace} is the trace of the heat operator, formally defined by $e^{\frac{t}{2}\Delta_{\U(N)}}$, on this Hilbert space:
\begin{equation}\label{eq:deformed-radial-Laplacian}
\Tr_{L^2(\U(N))^{\mathrm{Ad}}}(e^{\frac{t}{2}\Delta_{\U(N)}})=\Tr_{L^2_{\mathrm{sym}}(\mathbb T^N)}(e^{\frac{t}{2}\mathscr L_N})=\sum_{\lambda\in\widehat{\U(N)}}e^{-\frac{t}{2}c_{2,N}(\lambda)}.
\end{equation}

We now deform this spectral picture, with one caveat: the deformation is applied directly to the radial part of the Laplacian.  As emphasized by Remling--Rosler \cite{RemlingRosler11} in a related framework, deformations of radial Laplacians need not arise from geometric deformations of the ambient space. For $\alpha>0$, define
\begin{equation}\label{eq:weighted-laplacian-expanded}
\mathscr L_{\alpha,N}=\frac{\alpha}{\bigl(N+\alpha-1\bigr)}\sum_{i=1}^N\frac{\partial^2}{\partial\theta_i^2}+\frac{1}{\bigl(N+\alpha-1\bigr)}\sum_{1\leq i<j\leq N}\cot\left(\frac{\theta_i-\theta_j}{2}\right)\left(\frac{\partial}{\partial \theta_i}-\frac{\partial}{\partial \theta_j}\right).
\end{equation}
With this sign convention, $\mathscr L_{\alpha,N}$ is the heat generator and $\mathscr L_{1,N}=\mathscr L_N$.

It is useful to separate the type-$A_{N-1}$ part from the determinant direction.  Let
\[
\partial_{\det}:=\sum_{i=1}^N\frac{\partial}{\partial\theta_i},
\]
and let $\Delta_{A_{N-1}}$ denote the Euclidean Laplacian tangent to the hyperplane $\sum_i\theta_i=0$.  Since
\[
\sum_{i=1}^N\frac{\partial^2}{\partial\theta_i^2}=\Delta_{A_{N-1}}+\frac1N\partial_{\det}^2
\]
and the interaction term in \eqref{eq:weighted-laplacian-expanded} involves only differences of angles, we have
\begin{equation}\label{eq:HO-center-decomposition}
(N+\alpha-1)\mathscr L_{\alpha,N}=\frac{\alpha}{N}\partial_{\det}^2+\left[\alpha\Delta_{A_{N-1}}+\sum_{1\leq i<j\leq N}\cot\left(\frac{\theta_i-\theta_j}{2}\right)\left(\frac{\partial}{\partial\theta_i}-\frac{\partial}{\partial\theta_j}\right)\right].
\end{equation}
The operator in brackets is $\alpha$ times the compact type-$A_{N-1}$ Heckman--Opdam radial operator with root multiplicity $2/\alpha$ (with our cotangent normalization), on the determinant-one torus.  At $\alpha=1$, this is the multiplicity-$2$ operator, and multiplying it by $1/N$ gives the $\mathrm{SU}(N)$ radial Laplacian for the metric inherited from $N\Tr(XY^*)$; the first term in \eqref{eq:HO-center-decomposition} supplies the determinant direction present for $\mathrm U(N)$.

For a partition $\rho$ of length at most $N$, the standard finite-variable Jack eigenvalue identity reads
\begin{equation}\label{eq:finite-Jack-eigenvalue}
-\bigl(N+\alpha-1\bigr)\mathscr L_{\alpha,N}J_\rho^{(\alpha)}=\left(\alpha\sum_{i=1}^N\rho_i^2+\sum_{i=1}^N(N+1-2i)\rho_i\right)J_\rho^{(\alpha)};
\end{equation}
see \cite[Chapter~VI]{Macdonald95}.  To extend this to all signatures, write
$\lambda=m(1,\ldots,1)+\bar\lambda$ with $m=\lambda_N$ and $\bar\lambda$ a partition of length at most $N-1$, and set
\[
J_{\lambda,N}^{(\alpha)}(x)=(x_1\cdots x_N)^mJ_{\bar\lambda}^{(\alpha)}(x).
\]
Since
$\sum_i(N+1-2i)=0$, \eqref{eq:finite-Jack-eigenvalue} then gives
\[
\mathscr L_{\alpha,N}J_{\lambda,N}^{(\alpha)}=-\kappa_{\alpha,N}(\lambda)J_{\lambda,N}^{(\alpha)},
\]
where
\begin{equation}\label{eq:kappa-def}
\kappa_{\alpha,N}(\lambda)=\frac{1}{N+\alpha-1}\left(\alpha\sum_{i=1}^N\lambda_i^2+\sum_{i=1}^N(N+1-2i)\lambda_i\right).
\end{equation}
In particular, $\kappa_{1,N}(\lambda)=c_{2,N}(\lambda)$ is the quadratic Casimir eigenvalue associated with $\lambda$.

For completeness, put
\[
w_{\alpha,N}(\theta)=|\Delta(e^{\mathrm i\theta})|^{2/\alpha},\qquad \mathscr P_N=\C[z_1^{\pm1},\ldots,z_N^{\pm1}]^{\mathfrak S_N}.
\]
On the dense subspace $\mathscr P_N\subset L^2_{\mathrm{sym}}(\mathbb T^N,w_{\alpha,N}\dd\theta)$, the differential expression has divergence form
\[
\mathscr L_{\alpha,N}=\frac{\alpha}{N+\alpha-1}w_{\alpha,N}^{-1}\sum_i\frac{\partial}{\partial \theta_i}\left(w_{\alpha,N}\frac{\partial}{\partial \theta_i}\right),
\]
and is symmetric and nonpositive.  By finite-variable Jack theory and determinant shifts, the Jack Laurent polynomials form a complete orthogonal eigenbasis; see \cite[Chapter~VI]{Macdonald95} and the compact realization in \cite{RemlingRosler11}.  Hence the operator on $\mathscr P_N$ is essentially self-adjoint, and we use its self-adjoint closure throughout.  Moreover,
\[
\sum_i(N+1-2i)\lambda_i=\sum_{k=1}^{N-1}k(N-k)(\lambda_k-\lambda_{k+1})\geq0,
\]
so $\kappa_{\alpha,N}(\lambda)\geq\alpha\sum_i\lambda_i^2/(N+\alpha-1)$.  For fixed $N$, comparison with a Gaussian sum over $\Z^N$ shows that the heat operator is trace class for every $t>0$.  We may therefore define the \emph{Jack heat trace} by
\begin{equation}\label{eq:Z-def}
Z_{\alpha,N}(t):=\Tr_{L^2_{\mathrm{sym}}(\mathbb T^N,\vert\Delta(e^{i\theta})\vert^{2/\alpha}\dd\theta)}\left(e^{\frac{t}{2}\mathscr L_{\alpha,N}}\right)=\sum_{\lambda\in\widehat{\U(N)}}e^{-\frac{t}{2}\kappa_{\alpha,N}(\lambda)}.
\end{equation}

\subsection{Stable representation}\label{subsec:stable-representation}

We use a bijective representation of highest weights described in \cite{Lem22}, which reveals a stable structure of the Casimir eigenvalues $c_{2,N}$.  The same coordinates remain compatible with the Jack deformation and lead to an exact stable decomposition of $\kappa_{\alpha,N}$.  We first recall a few standard facts about integer partitions and Jack polynomials.

Let $\Pcal$ be the set of integer partitions, equivalently Young diagrams.  Given a partition $\lambda\in\Pcal$, define its size $|\lambda|$, length $\ellpart(\lambda)$ and transpose $\lambda'$.  We use the standard statistics
\[
n(\lambda)=\sum_{i\geq1}(i-1)\lambda_i,\qquad n(\lambda')=\sum_{i\geq1}\binom{\lambda_i}{2}.
\]
If $\mu=(1^{m_1}2^{m_2}\cdots)$, we write $m_j(\mu):=m_j$ for the multiplicity of the part $j$, use the standard factor
\begin{equation}\label{eq:z-mu}
z_\mu:=\prod_{j\geq1}j^{m_j(\mu)}m_j(\mu)!,
\end{equation}
and write $p(d)$ for the number of partitions of $d$.
The ordinary total content of a partition is the sum of contents of its boxes, when the partition is represented as a Young diagram:
\begin{equation}\label{eq:ordinary-content}
K(\lambda)=\sum_{(i,j)\in\lambda}(j-i)=n(\lambda')-n(\lambda)=\frac12\sum_i\lambda_i(\lambda_i+1-2i).
\end{equation}

The total content $K(\lambda)$ is the eigenvalue of the usual cut-and-join operator and, in the $\U(N)$ setting of \cite{LemMai25,LM2}, the stable correction to the quadratic Casimir.  Its Jack deformation is obtained by replacing the ordinary box content $j-i$ with the anisotropic content below.  For $\alpha>0$ and a box $\square=(i,j)\in\lambda$, define $c_\alpha(\square)=\alpha(j-1)-(i-1)$ and set
\begin{equation}\label{eq:Calpha}
\cA_\alpha(\lambda)=\sum_{\square\in\lambda}c_\alpha(\square)=\alpha n(\lambda')-n(\lambda).
\end{equation}

We shall use only the following elementary bounds on the Jack content.  For every $\alpha>0$ and $\lambda\in\Pcal$,
\begin{equation}\label{eq:content-crude}
|\cA_\alpha(\lambda)|\leq \frac{\alpha+1}{2}|\lambda|(|\lambda|-1)\leq \frac{\alpha+1}{2}|\lambda|^2.
\end{equation}
Indeed, both $n(\lambda)$ and $n(\lambda')$ are bounded by $\binom{|\lambda|}{2}$.  We shall also use that, if $\ellpart(\lambda)\leq L$,
\begin{equation}\label{eq:K-sharp-lower}
2K(\lambda)\geq \frac{|\lambda|^2}{L}-L|\lambda|.
\end{equation}
This follows immediately from~\eqref{eq:ordinary-content}, Cauchy--Schwarz, and the rearrangement inequality.  We record the estimates explicitly because they are used below, but no special property of Jack theory is involved.

The Jack content is the eigenvalue of the stable Jack cut-and-join operator.  Let $\Lambda=\Q(\alpha)[p_1,p_2,\ldots]$ be the algebra of symmetric functions in power-sum coordinates, and for a partition $\mu$ set $p_\mu:=\prod_i p_{\mu_i}$.  On each homogeneous component $\Lambda_d$, define
\begin{equation}\label{eq:Dalpha}
\Dcal_\alpha=\frac12\left(\alpha\sum_{i,j\geq1}ijp_{i+j}\frac{\partial^2}{\partial p_i\partial p_j}+\sum_{i,j\geq1}(i+j)p_ip_j\frac{\partial}{\partial p_{i+j}}+(\alpha-1)\sum_{i\geq1}i(i-1)p_i\frac{\partial}{\partial p_i}\right).
\end{equation}
The first two terms are the usual join and cut operations, while the last is the Jack deformation.  Note that $\Dcal_\alpha$ preserves degree.  Standard Jack theory gives, for any standard normalization of $J_\lambda^{(\alpha)}$ \cite{Macdonald95,Stanley89},
\begin{equation}\label{eq:Dalpha-eigen}
\Dcal_\alpha J_\lambda^{(\alpha)}=\cA_\alpha(\lambda)J_\lambda^{(\alpha)}.
\end{equation}

Let $\operatorname{ev}_N:\Lambda\to\Q(\alpha)[z_1,\ldots,z_N]^{\mathfrak S_N}$ be finite-variable specialization.  The finite-rank and stable operators satisfy
\begin{equation}\label{eq:L-D}
\mathscr L_{\alpha,N}\operatorname{ev}_N(f) =-d\operatorname{ev}_N(f) -\frac{2}{N+\alpha-1}\operatorname{ev}_N(\Dcal_\alpha f), \qquad f\in\Lambda_d.
\end{equation}
Indeed, for a partition of length at most $N$,
\[
\alpha\sum_i\lambda_i^2+\sum_i(N+1-2i)\lambda_i =(N+\alpha-1)|\lambda|+2\cA_\alpha(\lambda).
\]
The Jack eigenvalue identities prove \eqref{eq:L-D} on a basis; Jack functions of length greater than $N$ specialize to zero on both sides.  Thus the same cut-and-join operator that defines the stable content correction also governs the finite-rank polynomial spectrum.
We now pass from partitions to highest weights using the stable coordinates of \cite{LemMai25,LM2}: fix integers $A_N,B_N\geq0$ such that $A_N+B_N=N-1$ and $A_N,B_N\sim N/2$. For definiteness, throughout the analytic part we take $A_N=\left\lfloor\frac{N-1}{2}\right\rfloor$ and $B_N=N-1-A_N$.  Let $\Pcal_{\leq A}=\{\mu\in\Pcal:\ellpart(\mu)\leq A\}$ be the set of integer partitions of maximum length $A$. If $\mu\in\Pcal_{\leq A_N}$ and $\nu\in\Pcal_{\leq B_N}$, extend both by zero parts and define
\begin{equation}\label{eq:lambda-mun}
\lambda_N(\mu,\nu,m) =\bigl( m+\mu_1,\ldots,m+\mu_{A_N},m, m-\nu_{B_N},\ldots,m-\nu_1 \bigr).
\end{equation}

The map $\lambda_N$ is a bijection from
$\Pcal_{\leq A_N}\times\Pcal_{\leq B_N}\times\Z$ onto $\widehat{\U(N)}$: for a signature $\lambda$, one recovers $m=\lambda_{A_N+1}$ and then
\[
\mu_i=\lambda_i-m,\qquad \nu_j=m-\lambda_{N+1-j}.
\]
Moreover,
\[
\lambda_N(\mu,\nu,m)=\lambda_N(\mu,\nu,0)+m(1,\ldots,1),
\]
so $m$ is the exponent of the determinant twist $(\det)^m$.  We shall refer to $m$ as the \emph{determinant charge}.

\begin{figure}[h!]
\centering
\includegraphics[width=0.3\textwidth]{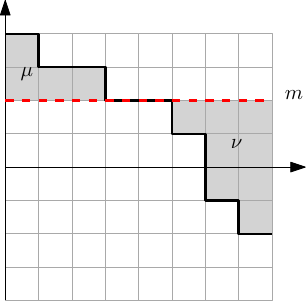}
\caption{Stable decomposition of the signature $(4,3,3,2,2,1,-1,-2)$.  The partitions $\mu$ and $\nu$ encode the positive and negative excitations around the determinant level $m$.}
\label{fig:stable-signature}
\end{figure}

For any $\alpha>0$, any $\mu,\nu\in\Pcal$ and any $m\in\Z$, set
\begin{equation}\label{eq:Ealpha}
E_\alpha(\mu,\nu,m)=|\mu|+|\nu|+\alpha m^2,\qquad F_\alpha(\mu,\nu,m)=\cA_\alpha(\mu)+\cA_\alpha(\nu)+\alpha m(|\mu|-|\nu|)-\frac{\alpha(\alpha-1)}2m^2.
\end{equation}
The deformed Casimir admits the following decomposition.

\begin{proposition}\label{prop:stable-decomposition}
Let $\alpha>0$, $N\geq1$, $\mu\in\Pcal_{\leq A_N}$, $\nu\in\Pcal_{\leq B_N}$, and $m\in\Z$. Then
\begin{equation}\label{eq:stable-exact}
\kappa_{\alpha,N}(\lambda_N(\mu,\nu,m)) =E_\alpha(\mu,\nu,m) +\frac{2}{\bigl(N+\alpha-1\bigr)}F_\alpha(\mu,\nu,m).
\end{equation}
\end{proposition}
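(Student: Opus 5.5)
Our plan is a direct computation from the definition \eqref{eq:kappa-def}. Multiplying by $N+\alpha-1$, it suffices to show
\[
\alpha\sum_{i=1}^N\lambda_i^2+\sum_{i=1}^N(N+1-2i)\lambda_i=(N+\alpha-1)E_\alpha(\mu,\nu,m)+2F_\alpha(\mu,\nu,m)
\]
for $\lambda=\lambda_N(\mu,\nu,m)$. We will organize the computation in three steps, following the structure of \eqref{eq:lambda-mun}.

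\textbf{Step 1: remove the charge.} We first take $m=0$ and write $\lambda^0=\lambda_N(\mu,\nu,0)$. Then $\lambda=\lambda^0+m(1,\ldots,1)$. Since $\sum_i(N+1-2i)=0$, the linear term only sees $\lambda^0$. The quadratic term gives
\[
\alpha\sum_i\lambda_i^2=\alpha\sum_i(\lambda^0_i)^2+2\alpha m(|\mu|-|\nu|)+\alpha Nm^2,
\]
because $\sum_i\lambda^0_i=|\mu|-|\nu|$. On the right-hand side, the $m$-dependent terms are
\[
(N+\alpha-1)\alpha m^2+2\alpha m(|\mu|-|\nu|)-\alpha(\alpha-1)m^2=\alpha Nm^2+2\alpha m(|\mu|-|\nu|).
\]
These match exactly, so the statement reduces to the case $m=0$.

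\textbf{Step 2: the $\mu$ block at $m=0$.} The coordinates $i=1,\ldots,A_N$ carry $\mu_i$. Their contribution is
\[
\alpha\sum_i\mu_i^2+\sum_i(N+1-2i)\mu_i.
\]
We rewrite this as $(N+\alpha-1)|\mu|+\alpha\sum_i\mu_i(\mu_i-1)-2\sum_i(i-1)\mu_i$. Since $\sum_i\mu_i(\mu_i-1)=2n(\mu')$ and $\sum_i(i-1)\mu_i=n(\mu)$, this equals $(N+\alpha-1)|\mu|+2\cA_\alpha(\mu)$ by \eqref{eq:Calpha}. This is exactly the partition identity already stated before \eqref{eq:L-D}.

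\textbf{Step 3: the $\nu$ block at $m=0$.} Position $N+1-j$ carries $-\nu_j$, and its coefficient is $N+1-2(N+1-j)=-(N+1-2j)$. The product $-(N+1-2j)\cdot(-\nu_j)=(N+1-2j)\nu_j$ is therefore the same as for a partition placed at position $j$. Hence the $\nu$ block contributes $(N+\alpha-1)|\nu|+2\cA_\alpha(\nu)$ by the same identity. The zero entry at position $A_N+1$ contributes nothing, and the length constraints guarantee the two blocks do not overlap.

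Adding the two blocks gives $(N+\alpha-1)(|\mu|+|\nu|)+2(\cA_\alpha(\mu)+\cA_\alpha(\nu))$, which proves the $m=0$ case. The main point needing care is the sign and index bookkeeping in Step 3, namely that the reversal $i\mapsto N+1-i$ flips the sign of $N+1-2i$. Everything else is routine algebra.
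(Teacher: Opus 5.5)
Your proposal is correct and follows essentially the same computation as the paper. You split off the determinant translation, whose linear part vanishes because $\sum_i(N+1-2i)=0$ and whose quadratic part gives $\alpha Nm^2+2\alpha m(|\mu|-|\nu|)$. You then apply the partition identity $\alpha\sum_i\mu_i^2+\sum_i(N+1-2i)\mu_i=(N+\alpha-1)|\mu|+2\cA_\alpha(\mu)$ to each block, noting that the reversal in the $\nu$ block cancels the sign; your version just spells out the index bookkeeping that the paper leaves implicit.
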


\begin{proof}
Set $D=N+\alpha-1$.  Substituting \eqref{eq:lambda-mun} into \eqref{eq:kappa-def}, the two partition sectors satisfy
\[
\alpha\sum_i\mu_i^2+\sum_i(N+1-2i)\mu_i =D|\mu|+2\cA_\alpha(\mu),
\]
and similarly for $\nu$; the reversal in the lower block cancels its minus sign.  The determinant translation contributes
\[
\alpha Nm^2+2\alpha m(|\mu|-|\nu|) =\alpha Dm^2+2\alpha m(|\mu|-|\nu|)-\alpha(\alpha-1)m^2,
\]
while its linear contribution vanishes.  Dividing by $D$ gives \eqref{eq:stable-exact}.
\end{proof}

The decomposition above isolates clearly the dependence in $N$; however, it does not give coercivity because $F_\alpha$ has no fixed sign.  For the Schur case we use the domination estimate of \cite[Lemma~4.1]{LemMai25}: with the same balanced stable coordinates as above,
\begin{equation}\label{eq:Schur-coercivity}
\kappa_{1,N}(\lambda_N(\mu,\nu,m)) \geq \frac12\bigl(|\mu|+|\nu|\bigr) +\left(m+\frac{|\mu|-|\nu|}{N}\right)^2.
\end{equation}
The new point is that this coercivity is stable under the Jack deformation for every positive value of the parameter.  For later use, set $\widetilde\alpha:=\min\{\alpha,\alpha^{-1}\}$.

\begin{lemma}\label{lem:Jack-coercivity}
For every $\alpha>0$, every $N\geq1$, and every dominant signature $\lambda\in\widehat{\U(N)}$,
\begin{equation}
\kappa_{\alpha,N}(\lambda)\geq \widetilde\alpha\kappa_{1,N}(\lambda).
\end{equation}

\end{lemma}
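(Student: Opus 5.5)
The idea is to compare $\kappa_{\alpha,N}(\lambda)$ and $\kappa_{1,N}(\lambda)$ term by term, using only the two nonnegative quantities that appear in \eqref{eq:kappa-def}. Set
\[
S(\lambda):=\sum_{i=1}^N\lambda_i^2\geq0,\qquad T(\lambda):=\sum_{i=1}^N(N+1-2i)\lambda_i .
\]
For dominant $\lambda$ we already showed $T(\lambda)=\sum_{k=1}^{N-1}k(N-k)(\lambda_k-\lambda_{k+1})\geq0$. Then \eqref{eq:kappa-def} gives
\[
\kappa_{\alpha,N}(\lambda)=\frac{\alpha S+T}{N+\alpha-1},\qquad \kappa_{1,N}(\lambda)=\frac{S+T}{N}.
\]
Since $N+\alpha-1>0$ and $N>0$, the claim is equivalent to a cross-multiplied linear inequality in $(S,T)$. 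Because $S,T\geq0$, it suffices to compare the coefficients of $S$ and of $T$ separately. We split according to whether $\alpha\geq1$ or $\alpha\leq1$.

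\emph{Case $\alpha\geq1$}, where $\widetilde\alpha=1/\alpha$. We need $\alpha N(\alpha S+T)\geq (N+\alpha-1)(S+T)$.
\begin{itemize}
\item For the coefficient of $S$: $\alpha^2N-(N+\alpha-1)=(\alpha-1)\bigl((\alpha+1)N-1\bigr)\geq0$.
\item For the coefficient of $T$: $\alpha N-(N+\alpha-1)=(\alpha-1)(N-1)\geq0$.
\end{itemize}

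\emph{Case $0<\alpha\leq1$}, where $\widetilde\alpha=\alpha$. We need $N(\alpha S+T)\geq\alpha(N+\alpha-1)(S+T)$.
\begin{itemize}
\item For the coefficient of $S$: $\alpha N-\alpha(N+\alpha-1)=\alpha(1-\alpha)\geq0$.
\item For the coefficient of $T$: $N-\alpha(N+\alpha-1)=(1-\alpha)(N+\alpha)\geq0$.
\end{itemize}

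Combining each pair of coefficient inequalities with $S,T\geq0$ gives the lemma in both cases. The only genuinely structural input is the sign of $T(\lambda)$ for dominant signatures, already established above via the Abel summation identity. Everything else is an elementary factorisation, so I expect no real obstacle. The one step needing care is to use the exact shift $N+\alpha-1$, rather than $N$, in the denominator, since that is what makes the factorisations come out with the right signs.
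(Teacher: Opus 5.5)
Your proof is correct and is essentially the paper's argument. Both reduce to the nonnegativity of $\sum_i\lambda_i^2$ and of $\sum_i(N+1-2i)\lambda_i$ for dominant $\lambda$, split into the cases $\alpha\le1$ and $\alpha\ge1$, and compare coefficients; your factorisations $(\alpha-1)\bigl((\alpha+1)N-1\bigr)$, $(\alpha-1)(N-1)$, $\alpha(1-\alpha)$ and $(1-\alpha)(N+\alpha)$ all check out. The paper only packages the comparison differently, as $\kappa_{\alpha,N}(\lambda)=c\,\kappa_{1,N}(\lambda)+(\text{nonnegative term})$ with $c=\alpha N/(N+\alpha-1)\ge\alpha$ or $c=N/(N+\alpha-1)\ge\alpha^{-1}$, rather than cross-multiplying.
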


\begin{proof}
For any highest weight $\lambda$, set $L_N(\lambda):=\sum_{i=1}^N(N+1-2i)\lambda_i$.  Dominance gives the useful identity and lower bound
\begin{equation}\label{eq:weyl-linear-positive}
L_N(\lambda)=\sum_{k=1}^{N-1}k(N-k)(\lambda_k-\lambda_{k+1})\geq 0.
\end{equation}
We now bound $\kappa_{\alpha,N}$ using two estimates depending on the value of $\alpha$.

If $0<\alpha\leq1$, then \eqref{eq:kappa-def} gives
\[
\kappa_{\alpha,N}(\lambda) =\frac{\alpha N}{N+\alpha-1}\kappa_{1,N}(\lambda) +\frac{1-\alpha}{N+\alpha-1}L_N(\lambda) \geq \alpha\kappa_{1,N}(\lambda),
\]
because $N+\alpha-1\leq N$.  If $\alpha\geq1$, then
\[
\kappa_{\alpha,N}(\lambda) =\frac{N}{N+\alpha-1}\kappa_{1,N}(\lambda) +\frac{\alpha-1}{N+\alpha-1}\sum_{i=1}^N\lambda_i^2 \geq \alpha^{-1}\kappa_{1,N}(\lambda),
\]
because $N/(N+\alpha-1)\geq1/\alpha$.  The comparison follows.
\end{proof}

Combining the domination estimate~\eqref{eq:Schur-coercivity} with the previous lemma gives the lower bound used below:
\begin{equation}\label{eq:Jack-lower}
\kappa_{\alpha,N}(\lambda_N(\mu,\nu,m)) \geq \widetilde\alpha\left[ \frac12(|\mu|+|\nu|) +\left(m+\frac{|\mu|-|\nu|}{N}\right)^2 \right].
\end{equation}

\subsection{Probabilistic representation}\label{subsec:probabilistic-representation}

Using the stable representation of the Jack-deformed radial Laplacian, we now recast the deformed heat trace in probabilistic terms, following the construction of \cite{LemMai25} for the standard Casimir operator on $\U(N)$.  Set
\[
q_t=e^{-t/2},\qquad \phi(q)=\prod_{k\geq1}(1-q^k),\qquad \Theta_\alpha(t)=\sum_{m\in\Z}e^{-t\alpha m^2/2}.
\]
For $q\in(0,1)$, the $q$-uniform distribution on partitions is
\begin{equation}\label{eq:q-uniform}
\Pbb_q(\lambda)=\phi(q)q^{|\lambda|},\qquad \lambda\in\Pcal.
\end{equation}
Let $\mu,\nu$ be independent partitions with law $\Pbb_{q_t}$, and let $M$ be an independent integer-valued discrete Gaussian random variable:
\begin{equation}\label{eq:discrete-Gaussian}
\Pbb_{\alpha,t}(M=m)=\frac{e^{-t\alpha m^2/2}}{\Theta_\alpha(t)}.
\end{equation}
Let $\Omega_N:=\{\ellpart(\mu)\leq A_N,\ \ellpart(\nu)\leq B_N\}$ be the cutoff event on the lengths of the two random partitions.  We write $\E$ for expectation with respect to the joint product law of $(\mu,\nu,M)$, and $\E_q$ for expectation with respect to a single $q$-uniform partition.  The probabilistic representation is as follows.

\begin{proposition}\label{prop:exact-prob}
For every $N\geq1$, $\alpha>0$, and $t>0$,
\begin{equation}\label{eq:exact-prob}
Z_{\alpha,N}(t) =\frac{\Theta_\alpha(t)}{\phi(q_t)^2} \E\left[ \exp\left(-\frac{t}{\bigl(N+\alpha-1\bigr)}F_\alpha(\mu,\nu,M)\right) \one_{\Omega_N} \right].
\end{equation}
\end{proposition}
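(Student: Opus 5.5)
Our approach combines the spectral sum \eqref{eq:Z-def}, the bijection $\lambda_N$, and the exact decomposition of Proposition~\ref{prop:stable-decomposition}. Since $\lambda_N$ is a bijection from $\Pcal_{\leq A_N}\times\Pcal_{\leq B_N}\times\Z$ onto $\widehat{\U(N)}$, we reindex
\[
Z_{\alpha,N}(t)=\sum_{\mu\in\Pcal_{\leq A_N}}\sum_{\nu\in\Pcal_{\leq B_N}}\sum_{m\in\Z}e^{-\frac t2\kappa_{\alpha,N}(\lambda_N(\mu,\nu,m))}.
\]
All terms are nonnegative, so the reordering is legitimate by Tonelli, and the sum is finite because the heat operator is trace class. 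Substituting \eqref{eq:stable-exact}, each summand factors as
\[
q_t^{|\mu|}\,q_t^{|\nu|}\,e^{-t\alpha m^2/2}\exp\left(-\frac{t}{N+\alpha-1}F_\alpha(\mu,\nu,m)\right),
\]
using $e^{-\frac t2(|\mu|+|\nu|+\alpha m^2)}=q_t^{|\mu|+|\nu|}e^{-t\alpha m^2/2}$ and $\frac t2\cdot\frac{2}{N+\alpha-1}=\frac{t}{N+\alpha-1}$.

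Next we write the product weight as a probability. We have $q_t^{|\mu|}=\phi(q_t)^{-1}\Pbb_{q_t}(\mu)$, and likewise for $\nu$. Also $e^{-t\alpha m^2/2}=\Theta_\alpha(t)\Pbb_{\alpha,t}(M=m)$. The restriction $\mu\in\Pcal_{\leq A_N}$, $\nu\in\Pcal_{\leq B_N}$ becomes the indicator $\one_{\Omega_N}$, with the sum now running over all $(\mu,\nu,m)\in\Pcal\times\Pcal\times\Z$. The triple sum is then exactly $\frac{\Theta_\alpha(t)}{\phi(q_t)^2}$ times the expectation in \eqref{eq:exact-prob}.

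The only point needing care is integrability, that is, making sure the expectation is finite rather than a formal rearrangement. The integrand is nonnegative, so Tonelli applies and the identity holds in $[0,\infty]$. Finiteness then follows from the left-hand side: trace class at fixed $N$, or directly from \eqref{eq:Jack-lower}. Note that $F_\alpha$ is unbounded below, so without the cutoff $\one_{\Omega_N}$ the expectation could diverge. Finiteness therefore genuinely relies on the identification with the spectral sum, and I would remark on this explicitly.
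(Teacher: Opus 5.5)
Your proposal is correct and follows the same route as the paper: reindex the spectral sum through the bijection $\lambda_N$, substitute Proposition~\ref{prop:stable-decomposition}, and rewrite $q_t^{|\mu|}=\phi(q_t)^{-1}\Pbb_{q_t}(\mu)$ (likewise for $\nu$) and $e^{-t\alpha m^2/2}=\Theta_\alpha(t)\Pbb_{\alpha,t}(M=m)$. Your extra justification of the rearrangement is something the paper leaves implicit: Tonelli for nonnegative terms, finiteness inherited from the trace-class spectral sum, and your observation that $F_\alpha$ is unbounded below so the cutoff $\one_{\Omega_N}$ is essential (e.g.\ $\cA_\alpha((1^d))=-\binom d2$).
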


\begin{proof}
The bijection $\lambda_N$ and Proposition~\ref{prop:stable-decomposition} give
\[
Z_{\alpha,N}(t) =\sum_{m,\mu,\nu} q_t^{|\mu|+|\nu|}e^{-t\alpha m^2/2} \exp\left(-\frac{t}{N+\alpha-1}F_\alpha(\mu,\nu,m)\right) \one_{\Omega_N}.
\]
Using $q_t^{|\mu|}=\phi(q_t)^{-1}\Pbb_{q_t}(\mu)$ and
$e^{-t\alpha m^2/2}=\Theta_\alpha(t)\Pbb_{\alpha,t}(M=m)$ yields \eqref{eq:exact-prob} by independence.
\end{proof}

We shall use two elementary summability bounds.  For every $q\in(0,1)$ and $s\geq0$ there exist $C,c>0$ such that
\begin{equation}\label{eq:q-tail}
\sum_{\lambda\in\Pcal:|\lambda|>L}|\lambda|^s q^{|\lambda|} \leq Ce^{-cL},\qquad L\geq1,
\end{equation}
with constants locally uniform in $q$.  Indeed, on a compact $q$-interval choose $q<\rho<1$ uniformly; then $d^s q^d\leq C\rho^d$, and Euler's identity
$\sum_{\lambda}\rho^{|\lambda|}=\phi(\rho)^{-1}<\infty$ gives the tail bound.  Likewise, for every compact $K\subset(0,\infty)^2$ and every $s\geq0$,
\begin{equation}\label{eq:gaussian-moments}
\sup_{(t,\alpha)\in K} \sum_{m\in\Z}(1+|m|)^s e^{-t\alpha m^2/4}<\infty,
\end{equation}
because $t\alpha$ is bounded below by a positive constant on $K$.

\begin{lemma}\label{lem:trace-tail}
Fix a compact set $K\subset(0,\infty)^2$ of $(t,\alpha)$-parameters, and let $0<\gamma<1$.  Then there exist $C,c>0$ such that, uniformly on $K$,
\begin{align}\label{eq:trace-tail}
&\sum_{\substack{\mu\in\Pcal_{\leq A_N},\nu\in\Pcal_{\leq B_N},m\in\Z\\
|\mu|+|\nu|>N^\gamma}}
e^{-\frac t2\kappa_{\alpha,N}(\lambda_N(\mu,\nu,m))}
\leq Ce^{-cN^\gamma}.
\end{align}
\end{lemma}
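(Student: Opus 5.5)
The plan is to bound each summand using the coercivity estimate \eqref{eq:Jack-lower}, which follows from Lemma~\ref{lem:Jack-coercivity} and the Schur domination \eqref{eq:Schur-coercivity}. This gives, for every $(\mu,\nu,m)$ in the range of summation,
\[
e^{-\frac t2\kappa_{\alpha,N}(\lambda_N(\mu,\nu,m))}\leq e^{-\frac{t\widetilde\alpha}{4}(|\mu|+|\nu|)}\,e^{-\frac{t\widetilde\alpha}{2}\left(m+\frac{|\mu|-|\nu|}{N}\right)^2}.
\]
On the compact set $K$, both $t$ and $\widetilde\alpha$ are bounded below by a positive constant. Hence $\rho:=\sup_K e^{-t\widetilde\alpha/4}<1$ and $\sigma:=\inf_K t\widetilde\alpha/2>0$.

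Next, for fixed $\mu,\nu$, I will sum over $m\in\Z$. The shift $s=(|\mu|-|\nu|)/N$ is a real number, so I need a bound on $\sum_{m}e^{-\sigma(m+s)^2}$ that is uniform in $s\in\R$. It suffices to compare with the case where $m+s$ lies in $[-1/2,1/2)$ plus translates: at most two integers satisfy $|m+s|<1$, and the remaining terms are dominated by $2\sum_{k\geq0}e^{-\sigma k^2}$. This gives the uniform bound $C_\sigma:=2+2\sum_{k\geq0}e^{-\sigma k^2}<\infty$. This is the only point where the non-integer shift requires care, and it is harmless.

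Then I drop the length constraints $\ellpart(\mu)\leq A_N$ and $\ellpart(\nu)\leq B_N$, since all terms are nonnegative. The remaining sum is bounded by
\[
C_\sigma\sum_{\substack{\mu,\nu\in\Pcal\\ |\mu|+|\nu|>N^\gamma}}\rho^{|\mu|+|\nu|}.
\]
I group the pair $(\mu,\nu)$ by its total size $n=|\mu|+|\nu|$. The number of such pairs equals the coefficient of $x^n$ in $\phi(x)^{-2}$, which is at most $\rho_1^{-n}\phi(\rho_1)^{-2}$ for any $\rho<\rho_1<1$. Therefore the sum is at most
\[
\phi(\rho_1)^{-2}\sum_{n>N^\gamma}(\rho/\rho_1)^n\leq Ce^{-cN^\gamma},
\]
with $c=\log(\rho_1/\rho)>0$. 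Equivalently, one can apply \eqref{eq:q-tail} to the pair, for instance via $\{|\mu|+|\nu|>L\}\subset\{|\mu|>L/2\}\cup\{|\nu|>L/2\}$, each of which has an exponential tail in $L$. All constants depend only on $\rho$ and $\sigma$, so they are uniform on $K$.

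No step is a genuine obstacle. The only points to check are the uniformity of the theta-type sum under the real shift $s$, and the fact that $\widetilde\alpha$ stays away from zero on $K$; the latter holds because $\alpha$ ranges over a compact subset of $(0,\infty)$. The restriction $\gamma<1$ is not needed for this bound itself.
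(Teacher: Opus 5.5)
Your proposal is correct and follows the paper's proof step by step: the coercivity bound \eqref{eq:Jack-lower}, a bound on the shifted Gaussian sum over $m$ that is uniform in the shift, and then an exponential tail for the pair $(\mu,\nu)$ with constants uniform on $K$. The differences are only cosmetic. You bound the shifted theta sum by counting the integers near $-s$, where the paper uses periodicity and continuity. You also control the pair tail through the coefficient bound for $\phi(x)^{-2}$ rather than the split $\{|\mu|>L/2\}\cup\{|\nu|>L/2\}$, which you also mention as an alternative.
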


\begin{proof}
By the bound \eqref{eq:Jack-lower}, with $S=|\mu|+|\nu|$ and $d=|\mu|-|\nu|$,
\[
e^{-\frac t2\kappa_{\alpha,N}} \leq e^{-t\widetilde\alpha S/4} e^{-t\widetilde\alpha(m+d/N)^2/2}.
\]
Because $K$ is compact in $(0,\infty)^2$, there are constants $t_0>0$ and $\widetilde\alpha_*>0$ such that $t\geq t_0$ and $\widetilde\alpha\geq\widetilde\alpha_*$ throughout $K$.  Hence $t\widetilde\alpha/2\geq t_0\widetilde\alpha_*/2=:c_1>0$.  The theta series $\sum_{m\in\Z}e^{-c_1(m+x)^2}$ is $1$-periodic and continuous in $x$, hence its supremum over $x\in\R$ is finite.  Summing first over $m$ therefore leaves, up to a uniform constant,
\[
\sum_{\substack{\mu,\nu\\ |\mu|+|\nu|>N^\gamma}}
e^{-c_0(|\mu|+|\nu|)}
\]
with $c_0=t_0\widetilde\alpha_*/4>0$.  Splitting according to whether $|\mu|>N^\gamma/2$ or $|\nu|>N^\gamma/2$ and applying \eqref{eq:q-tail} to each factor gives $Ce^{-cN^\gamma}$ uniformly on $K$.
\end{proof}

\subsection{Asymptotic expansion}\label{subsec:heat-trace-asymptotics}

We can now turn to the main analytic theorem.  The preceding representation has reduced the problem to a perturbation of a fixed product probability measure.  Since the interaction enters as $\exp(-tF_\alpha/\bigl(N+\alpha-1\bigr))$, the formal candidate expansion is its Taylor series.  The estimates previously established are precisely what is needed to justify this Taylor expansion uniformly and to show that the discarded length constraints remain negligible at any order.

\begin{theorem}\label{thm:heat-expansion}
Let $p\in\Z_{\geq0}$.  For every $t>0$ and $\alpha>0$,
\begin{equation}\label{eq:heat-expansion}
Z_{\alpha,N}(t) =\frac{\Theta_\alpha(t)}{\phi(q_t)^2} \sum_{r=0}^{p} \frac{(-t)^r}{r!\bigl(N+\alpha-1\bigr)^r} \E\left[F_\alpha(\mu,\nu,M)^r\right] +O_{t,\alpha,p}(\bigl(N+\alpha-1\bigr)^{-p-1}).
\end{equation}
The estimate is locally uniform in $(t,\alpha)\in(0,\infty)^2$.
\end{theorem}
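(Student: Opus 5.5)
Starting from Proposition~\ref{prop:exact-prob}, write $D=N+\alpha-1$ and $G:=-tF_\alpha(\mu,\nu,M)/D$. I split the expectation according to the event $B_N:=\{|\mu|+|\nu|\leq N^\gamma\}$, for a fixed $\gamma\in(0,1/2)$. On $B_N^c\cap\Omega_N$, the contribution of $\E[e^G\one_{\Omega_N}\one_{B_N^c}]$ times the prefactor is exactly the sum bounded in Lemma~\ref{lem:trace-tail}, so it is $O(e^{-cN^\gamma})$ uniformly on compacts. This is smaller than any power of $D$. On $B_N$, since $\ellpart(\mu)\leq|\mu|\leq N^\gamma<A_N$ and similarly for $\nu$ for $N$ large, the length constraint is automatic: $B_N\subset\Omega_N$. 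It therefore remains to expand $\E[e^G\one_{B_N}]$.

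On $B_N$, I Taylor-expand with Lagrange remainder:
\[
\Bigl|e^G-\sum_{r=0}^p\frac{G^r}{r!}\Bigr|\leq\frac{|G|^{p+1}}{(p+1)!}e^{\max(G,0)}.
\]
The polynomial part is the main obstacle. The factor $e^{G_+}$ must be controlled without appealing to $\Omega_N$-type coercivity, because the bad terms of $F_\alpha$ are $\alpha M(|\mu|-|\nu|)$ and $-\tfrac{\alpha(\alpha-1)}2M^2$ (the latter positive in $G$ when $\alpha>1$). I control these using the bounds on $B_N$ and the Gaussian weight of $M$:
\[
|G|\leq \frac{t}{D}\Bigl((\alpha+1)N^{2\gamma}+\alpha|M|N^\gamma+\tfrac{\alpha|\alpha-1|}{2}M^2\Bigr).
\]
With $\gamma<1/2$, this gives $|G|\leq C+\tfrac{t\alpha}{4}\cdot\varepsilon_N M^2+C|M|N^{\gamma-1}$, where $\varepsilon_N\to0$. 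Hence $e^{G_+}\leq C e^{t\alpha M^2/8}$ for large $N$, uniformly on compact $K$, and this is dominated by the Gaussian law of $M$ thanks to \eqref{eq:gaussian-moments}. An alternative that avoids this bookkeeping is to note that $e^{G}\cdot(\text{weights})$ is just $e^{-\frac t2\kappa_{\alpha,N}}$, and to use \eqref{eq:Jack-lower} directly. Meanwhile $|G|^{p+1}\leq CD^{-p-1}(1+|\mu|+|\nu|+|M|)^{2(p+1)}$, using \eqref{eq:content-crude}. Taking expectations, whose moments are finite by \eqref{eq:q-tail} and \eqref{eq:gaussian-moments}, the remainder is $O(D^{-p-1})$ locally uniformly.

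Finally, I replace $\E[G^r\one_{B_N}]$ by $\E[G^r]$. The difference is $D^{-r}\E[|F_\alpha|^r\one_{B_N^c}]$. By \eqref{eq:content-crude}, $|F_\alpha|^r$ is polynomial in $(|\mu|,|\nu|,|M|)$, so by \eqref{eq:q-tail} and Cauchy--Schwarz against the Gaussian moments this is $O(e^{-cN^\gamma})$. Collecting the three error terms, and the uniformity of all constants over compact $(t,\alpha)$-sets (note that $\phi(q_t)^{-2}$ and $\Theta_\alpha(t)$ are continuous), gives \eqref{eq:heat-expansion}.
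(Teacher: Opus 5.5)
Your proposal is correct and follows essentially the same route as the paper: restrict to $\{|\mu|+|\nu|\leq N^\gamma\}\subset\Omega_N$, use Lemma~\ref{lem:trace-tail} for the complement inside $\Omega_N$, Taylor-expand, absorb the exponential factor of the remainder into part of the Gaussian weight of $M$, and remove the indicator at exponentially small cost. The differences are cosmetic: the paper takes $\gamma=1/4$ and the cruder factor $e^{|x|}$ where you take any $\gamma<1/2$ and the Lagrange factor $e^{G_+}$.
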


\begin{proof}
Fix a compact parameter set $K=[t_0,t_1]\times[\alpha_0,\alpha_1]\subset(0,\infty)^2$.  All constants below are uniform on $K$.  Set $D=N+\alpha-1$ and choose $\gamma=1/4$.  Let
\[
T_N=\{|\mu|+|\nu|\leq N^\gamma\}.
\]
For all sufficiently large $N$, $T_N\subset\Omega_N$, because
\[
\ellpart(\mu)\leq|\mu|\leq N^\gamma<A_N, \qquad \ellpart(\nu)\leq|\nu|\leq N^\gamma<B_N.
\]
By Lemma~\ref{lem:trace-tail}, the contribution of $\Omega_N\setminus T_N$ to the exact trace formula is $O(e^{-cN^\gamma})$ and therefore $O(N^{-m})$ for every $m$.

On $T_N$, the crude estimate \eqref{eq:content-crude} and \eqref{eq:Ealpha} give
\begin{equation}\label{eq:F-crude}
|F_\alpha(\mu,\nu,m)| \leq C_{\alpha_1}\left(S^2+|m|S+m^2+1\right), \qquad S=|\mu|+|\nu|.
\end{equation}
Since $S\leq N^{1/4}$ and $D\asymp N$ uniformly on $K$, the terms $S^2/D$ and $1/D$ are bounded.  Moreover, for $N$ large the coefficient of $m^2/D$ in \eqref{eq:F-crude} is at most $\alpha/8$, uniformly on $K$, while Young's inequality gives
\[
\frac{|m|S}{D}\leq \varepsilon m^2+C_\varepsilon\frac{S^2}{D^2}
\]
with $\varepsilon>0$ chosen uniformly small.  Hence, for all sufficiently large $N$,
\begin{equation}\label{eq:F-exp-control}
\frac{t}{D}|F_\alpha(\mu,\nu,m)| \leq \frac{t\alpha}{4}m^2+C_K.
\end{equation}

For $x\in\R$, Taylor's formula gives
\begin{equation}\label{eq:Taylor-bound}
\left|e^{-x}-\sum_{r=0}^{p}\frac{(-x)^r}{r!}\right| \leq \frac{|x|^{p+1}}{(p+1)!}e^{|x|}.
\end{equation}
Apply this with $x=tF_\alpha/D$ inside the exact probabilistic representation.  By \eqref{eq:F-exp-control}, the Gaussian factor from \eqref{eq:discrete-Gaussian} times $e^{|x|}$ is bounded by
\[
C_K e^{-t\alpha m^2/4}.
\]
The factor $|F_\alpha|^{p+1}$ is bounded by a fixed polynomial in $S$ and $|m|$ by \eqref{eq:F-crude}.  The summability bounds \eqref{eq:q-tail} and \eqref{eq:gaussian-moments} therefore imply
\[
\E\left[ |F_\alpha|^{p+1}e^{t|F_\alpha|/D}\one_{T_N} \right]\leq C_{K,p}.
\]
The Taylor remainder is thus $O(D^{-p-1})$.

We have proved
\begin{align*}
Z_{\alpha,N}(t)
&=\frac{\Theta_\alpha(t)}{\phi(q_t)^2}
\sum_{r=0}^{p}\frac{(-t)^r}{r!D^r}
\E\left[F_\alpha^r\one_{T_N}\right]
+O(D^{-p-1}).
\end{align*}
Finally, by \eqref{eq:F-crude} and the exponential tails of the product $q_t$-uniform law together with the discrete Gaussian law,
\[
\E\left[|F_\alpha|^r\one_{T_N^c}\right] =O(e^{-cN^\gamma})
\]
for each fixed $r$.  We may therefore remove $\one_{T_N}$ at an error smaller than any power of $N$.  This proves \eqref{eq:heat-expansion}, uniformly on $K$.
\end{proof}

\begin{remark}
The leading term in Theorem~\ref{thm:heat-expansion} is
\[
\lim_{N\to\infty}Z_{\alpha,N}(t)=\frac{\Theta_{\alpha}(t)}{\phi(q_t)^2}.
\]
For the first correction, set
\begin{equation}\label{eq:S-q}
S(q):=\sum_{k\geq1}\binom{k}{2}\frac{q^k}{1-q^k}.
\end{equation}
The independent geometric laws of the part multiplicities under the $q$-uniform measure, together with transpose invariance, give
\begin{equation}\label{eq:mean-Jack-content}
\E_q[\cA_\alpha(\mu)]=(\alpha-1)S(q).
\end{equation}
Moreover $\E[M^2]=-(2/\alpha)\partial_t\log\Theta_\alpha(t)$.  Using the symmetry of $M$ in the definition of $F_\alpha$ therefore yields
\begin{equation}\label{eq:EF}
\E[F_\alpha]=(\alpha-1)\left(2S(q_t)+\partial_t\log\Theta_\alpha(t)\right).
\end{equation}
Hence
\begin{equation}\label{eq:first-correction-explicit}
Z_{\alpha,N}(t) =\frac{\Theta_\alpha(t)}{\phi(q_t)^2} \left[ 1-\frac{t(\alpha-1)}{N+\alpha-1} \left(2S(q_t)+\partial_t\log\Theta_\alpha(t)\right) +O_{t,\alpha}(N^{-2}) \right].
\end{equation}
At $\alpha=1$ the order-$N^{-1}$ term vanishes, in agreement with \cite{LM2}.
\end{remark}

\section{Elliptic \texorpdfstring{$b$}{b}-Hurwitz theory}\label{sec:elliptic-b-hurwitz}

In this section we construct the genus-one counterpart of the generalized branched-cover theory of Chapuy--Do\l\k{e}ga \cite{ChapuyDolega22} from several perspectives.  The starting point is the simplest stable spectral quantity attached to the Jack cut-and-join operator: the power sum of its total-content eigenvalues.  We take this quantity as the definition of the elliptic theory and show below that it admits three equivalent realizations: as the genus-one closure of the simple $b$-Hurwitz theory, as a weighted count of real elliptic generalized coverings, and as an operator trace.

\begin{definition}\label{def:Hb}
For $d,r\geq0$, the \emph{genus-one $b$-Hurwitz number} is
\begin{equation}\label{eq:Hb}
\Hb_{1,b}(d,r) :=\sum_{\lambda\vdash d}\cA_{1+b}(\lambda)^r.
\end{equation}
For $d=0$ we use the convention $\Hb_{1,b}(0,0)=1$ and $\Hb_{1,b}(0,r)=0$ for $r>0$.
\end{definition}

We package these numbers into the generating series
\begin{equation}\label{eq:elliptic-master-H}
\mathscr H_b(q;\hbar) :=\sum_{d,r\geq0}\Hb_{1,b}(d,r)q^d\frac{\hbar^r}{r!}.
\end{equation}
Here $d$ is the generalized degree and $r$ is the number of labelled simple branch values in the geometric realizations below.

\subsection{Genus-one closure of the simple $b$-Hurwitz theory}\label{subsec:genus-one-closure}

We first construct the elliptic $b$-Hurwitz theory as the genus-one closure of the simple $b$-Hurwitz theory. Let us recall the required parts of the genus-zero theory from \cite{ChapuyDolega22}: generalized coverings and constellations, their MON weights in the simple-Hurwitz specialization, the disconnected automorphism normalization, and the Jack two-boundary kernel.

Let $\mathbb S^2_+$ be the closed upper hemisphere and let $\varpi_0:\mathbb S^2\to\mathbb S^2_+$ identify the two hemispheres by reflection across the equator.  If $\Sigma$ is a possibly nonorientable compact surface, write $\pi_\Sigma:\widetilde\Sigma\to\Sigma$ for its orientation double cover with deck involution $\jmath$.  A \emph{generalized branched covering} is a continuous map $f:\Sigma\to\mathbb S^2_+$, covering over the interior, admitting an ordinary branched covering $\widetilde f:\widetilde\Sigma\to\mathbb S^2$ such that
\[
f\circ\pi_\Sigma=\varpi_0\circ\widetilde f.
\]
Its generalized degree $d$ is half the degree of $\widetilde f$.  If $\lambda\vdash d$, a generalized branch value has profile $\lambda$ when its lift has ordinary profile
$[2]\lambda=(\lambda_1,\lambda_1,\ldots,\lambda_\ell,\lambda_\ell)$.

A map is a graph cellularly embedded in a compact surface; a corner is a sector between consecutive half-edges.  Equivalently, such an embedded graph determines a ribbon graph.  A $k$-constellation is such a map with vertices colored by $\{0,\ldots,k\}$, with a consecutive-color incidence rule; its size is the number of color-$0$ corners. Examples of maps, ribbon graphs and constellations are displayed in Figure~\ref{fig:map-ribbon}.

\begin{figure}[h!]
    \centering
    \includegraphics[width=0.8\textwidth]{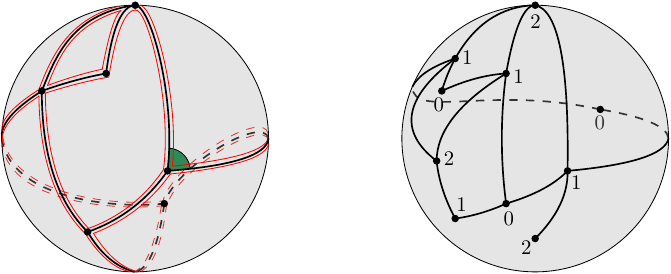}
    \caption{Left: an arbitrary map on the sphere and its thickening into a ribbon graph.  The highlighted sector between two consecutive half-edges illustrates a corner. Right: a 2-constellation of size 7 on the sphere.}
    \label{fig:map-ribbon}
\end{figure}

\begin{proposition}[\cite{ChapuyDolega22}, Proposition~2.3]\label{prop:covering-constellation-correspondence}
Let $f:\Sigma\to\mathbb S_+^2$ be a generalized branched covering with $k+2$ branch values monotonically numbered from $-1$ to $k$ along the equator.  If $P$ is the equatorial path through $0,1,\ldots,k$ in this order, then $f^{-1}(P)$ is a $k$-constellation on $\Sigma$, and this construction is bijective relative to the full profiles.
\end{proposition}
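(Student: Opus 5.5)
This statement is Chapuy--Do\l\k{e}ga's correspondence, and we follow their Proposition~2.3 via the orientation double cover. The plan is to lift to the orientable situation, apply the classical bijection between branched coverings of the sphere and constellations there, and then descend along the deck involution.

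\emph{Lift to the sphere.} Let $\widetilde f:\widetilde\Sigma\to\mathbb S^2$ be the ordinary branched covering with $f\circ\pi_\Sigma=\varpi_0\circ\widetilde f$. Since the branch values lie on the equator, which is fixed pointwise by the reflection, $\widetilde f$ has the same $k+2$ branch values, now on $\mathbb S^2$. The equatorial path $P$ through $0,1,\ldots,k$ avoids $-1$, so it is contractible in the punctured sphere. The classical construction of Lando--Zvonkin type then applies. The preimage $\widetilde f^{-1}(P)$ is a graph whose vertices are the preimages of the points $0,\ldots,k$, coloured by their images. Its edges are the lifts of the segments $[i,i+1]$, which gives the consecutive-colour rule. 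Its faces are in bijection with the preimages of $-1$, and each face is a disc because the complement of $P$ in $\mathbb S^2$ is a disc around $-1$. Hence $\widetilde f^{-1}(P)$ is a $k$-constellation on $\widetilde\Sigma$. Its colour-$0$ corners correspond to the $2d$ sheets, and its vertex degrees and face degrees record the ordinary profiles $[2]\lambda^{(i)}$.

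\emph{Descend along the deck involution.} Because $\varpi_0$ fixes $P$, the set $\widetilde f^{-1}(P)$ is $\jmath$-invariant. Also $\jmath$ is fixed-point free and orientation reversing. So $f^{-1}(P)=\pi_\Sigma(\widetilde f^{-1}(P))$ is a cellularly embedded coloured graph on $\Sigma$, and its cells are the images of discs freely permuted by $\jmath$. It therefore inherits the constellation axioms. It has $d$ colour-$0$ corners, so its size is $d$. A vertex or face of $\Sigma$ whose lift has the doubled pair of degrees $(\lambda_j,\lambda_j)$ has degree $\lambda_j$. This matches the profile convention, so the full profiles are preserved.

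\emph{Inverse map.} Given a $k$-constellation on $\Sigma$, take its orientation double cover to obtain a $\jmath$-invariant constellation on $\widetilde\Sigma$. Rebuild $\widetilde f$ by the classical inverse construction: send coloured vertices to the marked points, edges to the segments of $P$, and faces to the complementary disc, choosing the face and edge maps equivariantly. Then descend to obtain $f$. Both directions are mutually inverse up to the natural isomorphisms, which gives bijectivity relative to the full profiles.

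The main obstacle is this last step. The equivariant rebuilding must produce a map that genuinely factors through $\varpi_0$: one needs to check that the local orientation choices on each cell of $\Sigma$ can be made so that $\jmath$ corresponds to the hemisphere swap, that is, so that sheets over the upper and lower hemispheres are exchanged. I would handle this locally on each disc, using that a lift of a cell to $\widetilde\Sigma$ carries exactly one of its two orientations, and then glue the cell maps along $P$.
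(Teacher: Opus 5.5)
The paper gives no proof of this statement: it quotes it from Chapuy--Do\l\k{e}ga and uses it as a black box, so there is no internal argument to compare against. Your lift-and-descend sketch is the standard route and is correct in outline. The step you single out as the main obstacle is in fact automatic, so there is no gap.

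Write $\sigma$ for the reflection of $\mathbb S^2$ across the equator, so that $\varpi_0\circ\sigma=\varpi_0$.
\begin{itemize}
\item Build $\widetilde f$ by the classical construction on one face $F$ and one edge $e$ of each $\jmath$-orbit, using the canonical orientation of $\widetilde\Sigma$.
\item Transport to the other member of the orbit by $\widetilde f|_{\jmath F}:=\sigma\circ\widetilde f|_F\circ\jmath$ and $\widetilde f|_{\jmath e}:=\widetilde f|_e\circ\jmath$.
\item Since $\sigma$ and $\jmath$ both reverse orientation, the transported face map is again orientation preserving. So no choice of local orientation is ever made: it is a legitimate classical face map on $\jmath F$.
\item It agrees with the edge maps on $\partial(\jmath F)$, because $\sigma$ fixes $P$ pointwise.
\item The two face-sides along any edge land on opposite sides of $P$, since both face maps are orientation preserving on the oriented $\widetilde\Sigma$. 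Hence the glued map is a branched covering.
\end{itemize}
The identity $\widetilde f\circ\jmath=\sigma\circ\widetilde f$ then holds by construction. It gives $f\circ\pi_\Sigma=\varpi_0\circ\widetilde f$ for a well-defined $f$. It also makes $\pi_\Sigma$ injective on $\widetilde f^{-1}$ of the open upper hemisphere, which is exactly the requirement that $f$ be a $2d$-sheeted covering over the interior.

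One assertion in your forward direction deserves a line of justification: that $\jmath$ permutes the cells of $\widetilde f^{-1}(P)$ freely.
\begin{itemize}
\item \emph{Faces:} each face contains exactly one preimage of $-1$, and $\jmath$ permutes these preimages without fixed points.
\item \emph{Edges and vertices:} for $c$ on the equator one has $\varpi_0^{-1}(\varpi_0(c))=\{c\}$. So $f\circ\pi_\Sigma=\varpi_0\circ\widetilde f$ already forces $\widetilde f\circ\jmath=\widetilde f$ on $\widetilde f^{-1}(P)$. Then $\jmath e=e$ for an open edge $e$ would give $\jmath|_e=\mathrm{id}$ by injectivity of $\widetilde f|_e$, contradicting freeness.
\end{itemize}
Injectivity on isomorphism classes, which you only wave at, goes the same way. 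Lift a constellation isomorphism to the orientation-preserving $\jmath$-equivariant isomorphism upstairs. Upgrade it to an isomorphism of branched covers face by face on orbit representatives, and transport the result by $\jmath$.
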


In the simple $b$-Hurwitz specialization, a constellation of type $(\mu,\nu;r)$ has distinguished profiles $\mu,\nu\vdash d$ and $r$ further labelled simple branch values.  A labelled oriented presentation marks the $d$ color-$0$ corners and chooses a local orientation at each, hence gives $2^d d!$ presentations before quotienting by automorphisms.

A \emph{measure of non-orientability} (MON) assigns a polynomial $\rho(M,e)\in\Q[b]$ to the deletion of an edge $e$ in a map $M$.  If deleting $e$ exposes corners $c_1,c_2$, its local rules are: weight $1$ when the corners lie in distinct components; for two reconnections between different faces of one component, the two weights sum to $1+b$; and for two reconnections inside one face, the face-splitting choice has weight $1$ and its twisted alternative weight $b$.  Coherence is the corresponding cyclic compatibility condition of \cite[Definition~3.2]{ChapuyDolega22}; coherent integral MONs exist, with all local values in $\{1,b\}$.  Chapuy--Do\l\k{e}ga's recursive right-path deletion then gives a weight $\widetilde\rho(C,\omega)$ to every labelled oriented connected constellation.  We extend it multiplicatively to disconnected constellations, with each component retaining only the labels of the simple values at which it ramifies.  The resulting generating series is independent of the coherent MON. Throughout this section, a coherent MON $\rho$ shall be fixed, and it will be implicit in notations. When positivity or the specializations $b=0,1$ are invoked, $\rho$ is chosen integral.  The numerical invariants constructed below will be independent of this choice after summation.

For fixed $d,r$ and $\mu,\nu\vdash d$, we use the disconnected coefficient normalized by
\begin{equation}\label{eq:CD-labelled-expansion}
\Hb_{0,b}(\mu,\nu,r) =\frac{1}{2^d d!} \sum_{[\widehat C]} \left(\frac{2}{1+b}\right)^{c(\widehat C)} \widetilde\rho(\widehat C),
\end{equation}
where the sum is over labelled oriented disconnected double constellations of type $(\mu,\nu;r)$.  For an unlabelled constellation $C$, let $\mathrm{Lab}(C)$ be its set of labelled oriented presentations and set
\begin{equation}\label{eq:CD-unlabelled-weight}
w_b(C) :=\left(\frac{2}{1+b}\right)^{c(C)} \frac1{2^d d!}\sum_{\omega\in\mathrm{Lab}(C)}\widetilde\rho(C,\omega), \qquad w_b(\varnothing)=1.
\end{equation}
The label convention makes this weight multiplicative:
\begin{equation}\label{eq:spherical-weight-multiplicative}
w_b(C_1\sqcup C_2)=w_b(C_1)w_b(C_2).
\end{equation}
For later use, set
\[
\overline\rho_b(C):=\left(\frac{1+b}{2}\right)^{c(C)}w_b(C).
\]
For an integral MON, $\overline\rho_b(C)\in\Q_{\geq0}[b]$; it equals $1$ at $b=1$, and at $b=0$ it is $1$ exactly when every source component is orientable.

\begin{lemma}\label{lem:CD-unlabelled-normalization}
For fixed $d,r\geq0$ and $\mu,\nu\vdash d$,
\begin{equation}\label{eq:CD-enumerative-definition}
\Hb_{0,b}(\mu,\nu,r) =\sum_{[C]}\frac{w_b(C)}{|\Aut(C)|},
\end{equation}
where $C$ runs over unlabelled disconnected double constellations of type $(\mu,\nu;r)$.
\end{lemma}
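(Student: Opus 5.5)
The identity \eqref{eq:CD-enumerative-definition} will follow from an orbit-counting argument. The plan is to regroup the labelled sum \eqref{eq:CD-labelled-expansion} according to the underlying unlabelled constellation and then apply the orbit--stabilizer theorem.

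The first step is to make the group action explicit. Let $G_d=(\Z/2)^d\rtimes\mathfrak S_d$ be the hyperoctahedral group of order $2^dd!$. It acts on labelled oriented presentations by permuting the labels of the $d$ color-$0$ corners and flipping the chosen local orientations. Two labelled oriented double constellations are isomorphic exactly when they lie in the same $G_d$-orbit, and that orbit is then $\mathrm{Lab}(C)$ for the underlying unlabelled $C$. Simple branch values carry fixed labels and are not acted upon. I will then check that the stabilizer of a presentation $\omega\in\mathrm{Lab}(C)$ is isomorphic to $\Aut(C)$. The reason is that an automorphism of $C$ is determined by where it sends one labelled oriented corner in each connected component. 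It therefore induces a unique relabelling that fixes $\omega$, and conversely every element of the stabilizer is of this form. This gives $|\mathrm{Lab}(C)|=2^dd!/|\Aut(C)|$.

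The second step concerns the summand. The factor $(2/(1+b))^{c}$ depends only on the number of components, so it is constant on orbits. The MON weight $\widetilde\rho(C,\omega)$, however, may genuinely depend on the presentation, because the right-path deletion uses the labels and orientations. I therefore will not try to prove that it is constant. Instead I will group the labelled sum by orbits:
\[
\frac1{2^dd!}\sum_{[\widehat C]}\Bigl(\tfrac{2}{1+b}\Bigr)^{c(\widehat C)}\widetilde\rho(\widehat C)
=\sum_{[C]}\Bigl(\tfrac{2}{1+b}\Bigr)^{c(C)}\frac1{2^dd!}\sum_{\omega\in\mathrm{Lab}(C)}\widetilde\rho(C,\omega).
\]
By \eqref{eq:CD-unlabelled-weight}, the inner expression is exactly $w_b(C)$, so this yields $\sum_{[C]} w_b(C)$.

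This does not yet match the claimed $\sum_{[C]} w_b(C)/|\Aut(C)|$. The mismatch has to be resolved by fixing how $\mathrm{Lab}(C)$ is meant: as the set of distinct presentations, or as the set of pairs (presentation, labelling map). Under the first reading the sum in \eqref{eq:CD-labelled-expansion} runs over distinct labelled objects, each orbit has size $2^dd!/|\Aut(C)|$, and the $1/|\Aut(C)|$ appears once the orbit sum is compared with the free labelling count. Concretely, I will read \eqref{eq:CD-labelled-expansion} as a sum over all $2^dd!$ labellings of each $C$. Each distinct labelled object $\widehat C$ is then obtained $|\Aut(C)|$ times, and $\mathrm{Lab}(C)$ in \eqref{eq:CD-unlabelled-weight} is the set of distinct presentations. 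Dividing accordingly produces the $1/|\Aut(C)|$.

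The main obstacle is making these conventions consistent, namely which set $\mathrm{Lab}(C)$ denotes and how $\widetilde\rho$ is summed over it. The step I expect to be hardest is confirming that the stabilizer of a presentation equals $\Aut(C)$ in the disconnected setting. Here a component with no ramification among the simple values, for example a trivial sheet, can be permuted with an isomorphic component, and the multiplicative extension of $\widetilde\rho$ must be compatible with such swaps. I would settle this by checking that the MON weight of a disconnected presentation is the product of the weights of its components. Swapping isomorphic components then preserves the weight, and the orbit computation goes through.
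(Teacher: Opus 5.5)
There is a genuine gap, and it sits in the one thing the lemma asserts: the direction of the automorphism correction. Your Step~2 computation is correct for the conventions you adopt in Step~1. There $\mathrm{Lab}(C)$ is the set of distinct labelled presentations, with $|\mathrm{Lab}(C)|=2^dd!/|\Aut(C)|$, and the sum in \eqref{eq:CD-labelled-expansion} runs over labelled classes. But that computation yields $\sum_{[C]}w_b(C)$, which contradicts the statement rather than proving it.

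Your proposed repair then inverts the factor. Read \eqref{eq:CD-labelled-expansion} as a sum over all $2^dd!$ labellings of each $C$, and $\mathrm{Lab}(C)$ as the set of distinct presentations. Each distinct labelled object then occurs $|\Aut(C)|$ times, so
\[
\frac{1}{2^dd!}\sum_{\text{all labellings }\omega}\widetilde\rho(C,\omega)=|\Aut(C)|\cdot\frac{1}{2^dd!}\sum_{\omega\in\mathrm{Lab}(C)}\widetilde\rho(C,\omega).
\]
The contribution above $C$ is therefore $|\Aut(C)|\,w_b(C)$, not $w_b(C)/|\Aut(C)|$. The sentence ``dividing accordingly produces the $1/|\Aut(C)|$'' is false, and the proposal ends without a valid derivation.

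The root cause is in Step~1. The $G_d$-orbits are the fibres over unlabelled classes $[C]$, whereas labelled isomorphism classes are the finer $\Aut(C)$-orbits. The number $2^dd!/|\Aut(C)|$ counts those orbits; it is not $|\mathrm{Lab}(C)|$.

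The correct bookkeeping is the reverse of your concrete reading, and it is the one the paper fixes when it says a labelled oriented presentation ``gives $2^dd!$ presentations before quotienting by automorphisms''. So $\mathrm{Lab}(C)$ is the full set of $2^dd!$ presentations of a fixed representative $C$, while the index $[\widehat C]$ in \eqref{eq:CD-labelled-expansion} runs over labelled isomorphism classes. Then:
\begin{enumerate}
\item $\Aut(C)$ acts on $\mathrm{Lab}(C)$, and the labelled classes above $C$ are exactly its orbits.
\item The action is free, because an automorphism fixing every oriented color-$0$ corner is the identity (the rigidity you already invoke). So every orbit has exactly $|\Aut(C)|$ elements.
\item $\widetilde\rho$ is constant on each orbit, since it is a function of the labelled class.
\end{enumerate}
Hence
\[
\sum_{[\widehat C]\text{ above }C}\widetilde\rho(\widehat C)=\frac{1}{|\Aut(C)|}\sum_{\omega\in\mathrm{Lab}(C)}\widetilde\rho(C,\omega),
\]
and multiplying by $(2/(1+b))^{c(C)}/(2^dd!)$ gives $w_b(C)/|\Aut(C)|$.

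Only invariance of $\widetilde\rho$ along $\Aut(C)$-orbits is needed, so your worry that $\widetilde\rho$ may vary across $\mathrm{Lab}(C)$ is harmless. Your remark about swapping isomorphic components is the right check for that invariance, via multiplicativity with componentwise labels, and it does not affect freeness.
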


\begin{proof}
Group the presentations in \eqref{eq:CD-labelled-expansion} above a fixed unlabelled $C$.  The action of $\Aut(C)$ on $\mathrm{Lab}(C)$ is free, since fixing all oriented color-$0$ corners fixes the colored cellular map. Hence, the total contribution above $C$ is exactly $w_b(C)/|\Aut(C)|$.  Summing over $[C]$ proves the formula.  Removing the two distinguished caps changes neither this weight nor the automorphism group, so the same normalization applies to the cut presentation below.
\end{proof}

Finally, we recall the two-boundary kernel used in the closure below.  Put $\alpha=1+b$ and use the integral Jack normalization.  This kernel is precisely
\begin{equation}\label{eq:CD-kernel}
\begin{aligned}
\mathsf K_{d,\alpha}(\hbar;\mathbf p,\mathbf y):=\sum_{\lambda\vdash d}\frac{J_\lambda^{(\alpha)}(\mathbf p)J_\lambda^{(\alpha)}(\mathbf y)}{j_\lambda^{(\alpha)}} e^{\hbar\cA_\alpha(\lambda)}=\sum_{\mu,\nu\vdash d}p_\mu(\mathbf p)p_\nu(\mathbf y)\sum_{s\geq0}\frac{\hbar^s}{s!}\Hb_{0,b}(\mu,\nu,s),
\end{aligned}
\end{equation}
where $j_\lambda^{(\alpha)}=\langle J_\lambda^{(\alpha)},J_\lambda^{(\alpha)}\rangle_\alpha$; see \cite[Equation~(63) and Theorem~6.4]{ChapuyDolega22}.  At $\hbar=0$, the Jack Cauchy identity gives
\begin{equation}\label{eq:inverse-Jack-Cauchy-kernel}
\mathsf K_{d,\alpha}(0;\mathbf p,\mathbf y) =\sum_{\mu\vdash d}\frac{p_\mu(\mathbf p)p_\mu(\mathbf y)}{z_\mu\alpha^{\ellpart(\mu)}},
\end{equation}
where $z_\mu$ is defined in~\eqref{eq:z-mu}, hence $\langle p_\mu,p_\nu\rangle_\alpha =\delta_{\mu\nu}z_\mu\alpha^{\ellpart(\mu)}.$

We now identify Definition~\ref{def:Hb} with the genus-one closure of the simple $b$-Hurwitz kernel recalled above; by the genus-one closure we mean precisely sewing the two boundary components of the cylinder, turning the genus-zero target into a genus-one target.

\begin{proposition}\label{prop:cylinder-closure}
For every $d,r\geq0$,
\begin{equation}\label{eq:Hb-cylinder-definition}
\Hb_{1,b}(d,r)=\sum_{\mu\vdash d}z_\mu(1+b)^{\ellpart(\mu)}\Hb_{0,b}(\mu,\mu,r).
\end{equation}
\end{proposition}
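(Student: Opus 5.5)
The identity will follow from the kernel formula \eqref{eq:CD-kernel} by taking a trace. The plan is to extract $\Hb_{0,b}(\mu,\mu,r)$ as a coefficient of $\mathsf K_{d,\alpha}$ and then contract the two sets of power-sum variables using the Jack scalar product. Here $\alpha=1+b$, and we work on the finite-dimensional space $\Lambda_d$ throughout.

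\emph{Step 1: read off the genus-zero numbers as coefficients.} Comparing the coefficient of $\hbar^r/r!$ on both sides of \eqref{eq:CD-kernel} gives
\[
\sum_{\mu,\nu\vdash d}\Hb_{0,b}(\mu,\nu,r)\,p_\mu(\mathbf p)p_\nu(\mathbf y)=\sum_{\lambda\vdash d}\cA_\alpha(\lambda)^r\frac{J^{(\alpha)}_\lambda(\mathbf p)J^{(\alpha)}_\lambda(\mathbf y)}{j^{(\alpha)}_\lambda}.
\]
Since the $p_\mu p_\nu$ are linearly independent, $\Hb_{0,b}(\mu,\nu,r)$ is exactly the coefficient of $p_\mu(\mathbf p)p_\nu(\mathbf y)$ on the right-hand side.

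\emph{Step 2: define the contraction.} Consider the linear map $\Phi:\Lambda_d\otimes\Lambda_d\to\Q(\alpha)$ given by $f\otimes g\mapsto\langle f,g\rangle_\alpha$. We use the orthogonality relation $\langle p_\mu,p_\nu\rangle_\alpha=\delta_{\mu\nu}z_\mu\alpha^{\ellpart(\mu)}$, recorded after \eqref{eq:inverse-Jack-Cauchy-kernel}.

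\emph{Step 3: apply $\Phi$ to both sides.}
\begin{itemize}
\item On the power-sum side, $\Phi$ picks out the diagonal terms and gives $\sum_{\mu\vdash d}z_\mu(1+b)^{\ellpart(\mu)}\Hb_{0,b}(\mu,\mu,r)$.
\item On the Jack side, each term contributes $\cA_\alpha(\lambda)^r\langle J_\lambda,J_\lambda\rangle_\alpha/j_\lambda^{(\alpha)}=\cA_\alpha(\lambda)^r$.
\end{itemize}
Summing over $\lambda\vdash d$ gives $\Hb_{1,b}(d,r)$ by Definition~\ref{def:Hb}, which proves \eqref{eq:Hb-cylinder-definition}. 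Equivalently, the right-hand side of Step~1 is the tensor of the operator $\Dcal_\alpha^r$ written in the orthonormal-dual basis pair $\{J_\lambda\},\{J_\lambda/j_\lambda\}$, and $\Phi$ computes its trace. This also anticipates Proposition~\ref{prop:cut-and-join-trace}.

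\emph{Edge cases.} At $d=0$, both sides equal $\delta_{r,0}$. On the left this is the convention in Definition~\ref{def:Hb}. On the right, $\Lambda_0$ is spanned by $p_\varnothing=1$, with $z_\varnothing=1$ and $\Hb_{0,b}(\varnothing,\varnothing,r)=\delta_{r,0}$, since $w_b(\varnothing)=1$ and there are no simple branch values without sheets.

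\emph{Main obstacle.} The only real step is justifying the coefficient identity in \eqref{eq:CD-kernel} with exactly the normalization \eqref{eq:CD-labelled-expansion}. In particular, the factors $(2/(1+b))^{c}$ and $1/(2^dd!)$ must match Chapuy--Do\l\k{e}ga's Theorem~6.4 as they state it. I would take that matching as given, since the paper cites it. I would then double-check the normalization at $\hbar=0$ against \eqref{eq:inverse-Jack-Cauchy-kernel}. There the contraction yields $\sum_\mu 1=p(d)$, which agrees with $\Hb_{1,b}(d,0)=p(d)$ and serves as a consistency check.
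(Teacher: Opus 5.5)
Your proposal is correct and follows essentially the same route as the paper. You read off $\Hb_{0,b}(\mu,\nu,r)$ as the coefficient of $p_\mu(\mathbf p)p_\nu(\mathbf y)$ in \eqref{eq:CD-kernel}, contract both lines with the Jack scalar product (using $\langle p_\mu,p_\nu\rangle_\alpha=\delta_{\mu\nu}z_\mu\alpha^{\ellpart(\mu)}$ on one side and Jack orthogonality on the other), and, like the paper, take the cited kernel normalization as the only nontrivial input.
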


\begin{proof}
Put $\alpha=1+b$.  By \eqref{eq:CD-kernel}, the coefficient of $p_\mu(\mathbf p)p_\nu(\mathbf y)$ in the cylinder propagator is the exponential generating series of $\Hb_{0,b}(\mu,\nu,r)$.  Closing the two boundaries contracts equal profiles with the Jack scalar product \eqref{eq:inverse-Jack-Cauchy-kernel}; hence the closed amplitude is
\[
\sum_{\mu\vdash d}z_\mu\alpha^{\ellpart(\mu)} \sum_{r\geq0}\frac{\hbar^r}{r!}\Hb_{0,b}(\mu,\mu,r).
\]
Applying the same contraction to the first line of \eqref{eq:CD-kernel} and using Jack orthogonality gives
\[
\sum_{\lambda\vdash d}e^{\hbar\cA_\alpha(\lambda)}.
\]
Comparing coefficients of $\hbar^r/r!$ with Definition~\ref{def:Hb} proves \eqref{eq:Hb-cylinder-definition}.
\end{proof}

This closure admits an algebraic interpretation: it is the trace of an explicit operator.

\begin{proposition}\label{prop:cut-and-join-trace}
For every $d,r\geq0$,
\begin{equation}\label{eq:Hb-trace}
\Hb_{1,b}(d,r)=\Tr_{\Lambda_d}(\Dcal_{1+b}^{r}).
\end{equation}
Consequently,
\begin{equation}\label{eq:elliptic-master-spectral}
\mathscr H_b(q;\hbar)=\sum_{\lambda\in\Pcal}q^{|\lambda|}e^{\hbar\cA_{1+b}(\lambda)}.
\end{equation}
\end{proposition}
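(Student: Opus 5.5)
The plan is to diagonalize $\Dcal_{1+b}$ on each homogeneous component $\Lambda_d$ and read off the trace from its eigenvalues. Put $\alpha=1+b$ and work over the field $\Q(\alpha)$, or over $\R$ once $b\geq0$ is fixed. The space $\Lambda_d$ is finite-dimensional of dimension $p(d)$, since the power sums $p_\mu$ with $\mu\vdash d$ form a basis. The Jack functions $\{J_\lambda^{(\alpha)}\}_{\lambda\vdash d}$ also form a basis of $\Lambda_d$. This follows from their triangularity with respect to the monomial basis, with nonzero leading coefficient; alternatively, it follows from their orthogonality for $\langle\cdot,\cdot\rangle_\alpha$, which was used above in \eqref{eq:CD-kernel}--\eqref{eq:inverse-Jack-Cauchy-kernel}, because $\alpha>0$ makes the form positive definite.

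By \eqref{eq:Dalpha-eigen}, each $J_\lambda^{(\alpha)}$ is an eigenvector of $\Dcal_\alpha$ with eigenvalue $\cA_\alpha(\lambda)$. The operator $\Dcal_\alpha$ is therefore diagonalizable on $\Lambda_d$ in this basis, and $\Dcal_\alpha^r$ acts diagonally with eigenvalues $\cA_\alpha(\lambda)^r$. The trace is basis independent, so
\[
\Tr_{\Lambda_d}(\Dcal_\alpha^r)=\sum_{\lambda\vdash d}\cA_\alpha(\lambda)^r=\Hb_{1,b}(d,r)
\]
by Definition~\ref{def:Hb}. For $d=0$ we have $\Lambda_0=\Q(\alpha)$, and $\Dcal_\alpha$ acts on it by $0$. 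With the convention $0^0=1$ this gives trace $1$ for $r=0$ and $0$ for $r>0$, matching the stated convention. Alternatively, the identity can be obtained from Proposition~\ref{prop:cylinder-closure} by noting that contracting the kernel \eqref{eq:CD-kernel} with the Jack scalar product is exactly a trace. I would mention this only as a consistency remark.

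For \eqref{eq:elliptic-master-spectral}, I would substitute into \eqref{eq:elliptic-master-H} to get
\[
\mathscr H_b(q;\hbar)=\sum_{d\geq0}q^d\sum_{\lambda\vdash d}\sum_{r\geq0}\frac{\hbar^r}{r!}\cA_{1+b}(\lambda)^r.
\]
The inner sum over $\lambda\vdash d$ is finite, so the sums over $\lambda$ and $r$ can be exchanged. Summing the exponential series then gives $\sum_{\lambda\vdash d}e^{\hbar\cA_{1+b}(\lambda)}$. Regrouping over $d$ yields $\sum_{\lambda\in\Pcal}q^{|\lambda|}e^{\hbar\cA_{1+b}(\lambda)}$. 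This is an identity of formal power series in $q$ whose coefficients are entire functions of $\hbar$, and it is also an analytic identity for $|q|<1$. Analytic convergence follows from the bound \eqref{eq:content-crude}, since the $q^d$ coefficient is at most $p(d)e^{|\hbar|(\alpha+1)d^2/2}$. That bound alone does not give convergence, so I would state the identity as formal in $q$, or treat $\hbar$ as a formal parameter.

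The only step needing care is that the eigenvalue identity \eqref{eq:Dalpha-eigen} holds for this particular normalization of $\Dcal_\alpha$, namely the factor $\frac12$ and the deformation term $(\alpha-1)\sum i(i-1)p_i\partial_{p_i}$. The paper takes this identity as standard, citing Stanley. If I needed to check it, I would compare with \eqref{eq:L-D} and the finite-variable eigenvalue \eqref{eq:finite-Jack-eigenvalue}, using the identity $\alpha\sum\lambda_i^2+\sum(N+1-2i)\lambda_i=(N+\alpha-1)|\lambda|+2\cA_\alpha(\lambda)$. Since the map $\operatorname{ev}_N$ is injective on $\Lambda_d$ when $N\geq d$, this determines the eigenvalues of $\Dcal_\alpha$ on $\Lambda_d$.
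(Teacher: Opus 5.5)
Your proposal is correct and is essentially the paper's proof: the Jack basis diagonalizes $\Dcal_{1+b}$ on $\Lambda_d$ by \eqref{eq:Dalpha-eigen}. Hence $\Tr_{\Lambda_d}(\Dcal_{1+b}^r)=\sum_{\lambda\vdash d}\cA_{1+b}(\lambda)^r$, and exponentiating and summing over $d$ with weight $q^d$ gives \eqref{eq:elliptic-master-spectral}.

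One side remark should be dropped rather than hedged. For real $\hbar\neq0$, \eqref{eq:elliptic-master-spectral} is not an analytic identity for $|q|<1$. The term $\lambda=(d)$ (if $\hbar>0$) or $\lambda=(1^d)$ (if $\hbar<0$) alone makes the $q^d$-coefficient at least $e^{|\hbar|d(d-1)/2}$. So the $q$-radius of convergence is zero and the identity is purely formal, exactly as the paper treats it; this is why Section~\ref{subsec:finite-rank-factorization} works with the length-truncated sectors $\mathscr H_{b,L}$.
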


\begin{proof}
By \eqref{eq:Dalpha-eigen}, the Jack basis diagonalizes $\Dcal_{1+b}$ with eigenvalues $\cA_{1+b}(\lambda)$.  Hence
\[
\Tr_{\Lambda_d}(\Dcal_{1+b}^{r}) =\sum_{\lambda\vdash d}\cA_{1+b}(\lambda)^r =\Hb_{1,b}(d,r)
\]
by Definition~\ref{def:Hb}.  Exponentiating and summing over $d$ with weight $q^d$ gives \eqref{eq:elliptic-master-spectral}.
\end{proof}

We record two algebraic consequences of the trace realization. First, the standard Jack involution gives
\begin{equation}\label{eq:elliptic-Jack-duality}
\Hb_{1,\alpha-1}(d,r) =(-\alpha)^r\Hb_{1,\alpha^{-1}-1}(d,r),
\end{equation}
since $\cA_\alpha(\lambda')=-\alpha\cA_{\alpha^{-1}}(\lambda)$.  In particular, $\Hb_{1,0}(d,r)=0$ for odd $r$ by transpose symmetry. Second, the elliptic invariants inherit the positivity phenomenon of the genus-zero theory.

\begin{theorem}\label{thm:positivity}
For all $d,r\geq0$, $\Hb_{1,b}(d,r)\in\Z_{\geq0}[b]$.
\end{theorem}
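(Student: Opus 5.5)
The plan is to work with the trace realization of Proposition~\ref{prop:cut-and-join-trace}, $\Hb_{1,b}(d,r)=\Tr_{\Lambda_d}(\Dcal_{1+b}^{r})$, and to compute this trace in the power-sum basis $\{p_\mu\}_{\mu\vdash d}$ of $\Lambda_d$ rather than in the Jack basis. The trace does not depend on the basis, and $\{p_\mu\}$ is a basis of $\Lambda_d$ over $\Q(\alpha)$. The key claim is then the following: with $\alpha=1+b$, every matrix entry of $\Dcal_{1+b}$ in the basis $\{p_\mu\}$ lies in $\Z_{\geq0}[b]$. Granting this, we can expand
\[
\Tr_{\Lambda_d}(\Dcal_{1+b}^{r})=\sum_{\mu^{(0)},\ldots,\mu^{(r-1)}\vdash d}\prod_{s=0}^{r-1}[p_{\mu^{(s+1)}}]\,\Dcal_{1+b}\,p_{\mu^{(s)}},\qquad \mu^{(r)}=\mu^{(0)}.
\]
This is a finite sum of products of elements of $\Z_{\geq0}[b]$, so it lies in $\Z_{\geq0}[b]$. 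The degenerate cases $d=0$ and $r=0$ agree with Definition~\ref{def:Hb}: for $r=0$ the trace equals $p(d)$.

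The main step is to check the claim term by term on \eqref{eq:Dalpha}, applied to $p_\mu$ with multiplicities $m_k=m_k(\mu)$. Each of the three terms sends a monomial to a combination of monomials, so we only need to check the coefficients.

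\emph{Join term.} The term $\frac{\alpha}{2}\sum_{i,j}ij\,p_{i+j}\partial_{p_i}\partial_{p_j}$ produces the following coefficients. For $i\neq j$, the ordered pairs $(i,j)$ and $(j,i)$ combine to give $\alpha\, ij\, m_im_j$. For $i=j$ the coefficient is $\alpha\, i^2\binom{m_i}{2}$. Both are nonnegative integers times $\alpha=1+b$.

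\emph{Cut term.} In $\frac12\sum_{i,j}(i+j)p_ip_j\partial_{p_{i+j}}$, fix $k=i+j$. For $i\neq j$, the two ordered pairs $(i,j)$ and $(j,i)$ give the same monomial, with total coefficient $k\,m_k$. When $k=2i$ and $i=j$, the coefficient is $\frac{k}{2}m_k=i\,m_k$. Both are nonnegative integers.

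\emph{Deformation term.} The term $\frac{b}{2}\sum_i i(i-1)p_i\partial_{p_i}$ acts diagonally with eigenvalue $b\sum_i\binom{i}{2}m_i$, which lies in $b\,\Z_{\geq0}$.

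The only point needing care is the factor $\frac12$: we must verify that it is always absorbed, either by the symmetry $i\leftrightarrow j$ or by the binomial/half-integer identities $\binom{m}{2}$, $\binom{i}{2}$, $\frac{2i}{2}$ above. Integrality must not be lost when distinct pairs $(i,j)$ produce the same target monomial; since those contributions simply add, and each is in $\Z_{\geq0}[b]$, this is harmless. No cancellation issue arises, because every individual entry is already a positive combination.

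As a consistency check, at $b=0$ this recovers the classical fact that $\Tr(\Dcal_1^r)$ counts closed walks of transpositions, equivalently torus covers. I do not expect the geometric realization (weights $w_b$ with denominators $(1+b)^{c}$ and $2^dd!$) to be needed; the operator-matrix argument avoids those denominators entirely.
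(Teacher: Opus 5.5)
Your proposal is correct and follows the paper's proof: both compute $\Tr_{\Lambda_d}(\Dcal_{1+b}^{r})$ in the power-sum basis and check that the join, cut and diagonal coefficients, namely $(1+b)ijm_im_j$, $(1+b)i^2\binom{m_i}{2}$, $km_k$, $im_{2i}$ and $b\sum_i\binom{i}{2}m_i$, all lie in $\Z_{\geq0}[b]$. The conclusion then follows because this semiring is closed under the matrix products and traces that compute the powers of $\Dcal_{1+b}$.
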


\begin{proof}
Write $m_i=m_i(\mu)$ for the multiplicities in $p_\mu$.  Combining the ordered pairs $(i,j)$ and $(j,i)$ in \eqref{eq:Dalpha}, a join of distinct parts $i<j$ has coefficient $(1+b)ijm_im_j$, and a join of equal parts has coefficient $(1+b)i^2\binom{m_i}{2}$.  A cut of a part $k=i+j$ into distinct parts has coefficient $k m_k$, while the equal cut $2i\mapsto(i,i)$ has coefficient $i m_{2i}$.  The diagonal coefficient is $b\sum_i\binom{i}{2}m_i$.  All these coefficients lie in $\Z_{\geq0}[b]$.  Thus every entry of the matrix of $\Dcal_{1+b}$ in the basis $(p_\mu)_{\mu\vdash d}$ belongs to this semiring, and the same is true for every entry of each of its powers; hence their traces also lie in $\Z_{\geq0}[b]$.  Proposition~\ref{prop:cut-and-join-trace} proves the theorem, including $r=0$.
\end{proof}

\subsection{Real elliptic generalized coverings}\label{subsec:real-elliptic-covers}

The genus-one analogue uses an oriented torus with an orientation-reversing involution $\iota$ whose fixed locus $E^\iota$ has two components and whose quotient $E/\iota$ is an annulus.  We use the standard real elliptic model
\begin{equation}\label{eq:real-elliptic-target}
E_T:=\C/(\Z+iT\Z),\qquad \iota[z]:=[\overline z],\qquad T>0.
\end{equation}
Its fixed locus is the disjoint union of the two real ovals $R_0=\R/\Z$ and $R_1=(\R+iT/2)/\Z$, and the quotient $\varpi:E_T\longrightarrow Q_T:=E_T/\iota$ is a closed annulus.  Every such involution is topologically equivalent to this model, and the parameter $T$ plays no enumerative role.  This geometry is illustrated in Figure~\ref{fig:real-elliptic-annulus}.

\begin{figure}[h!]
    \centering
    \includegraphics[width=0.8\linewidth]{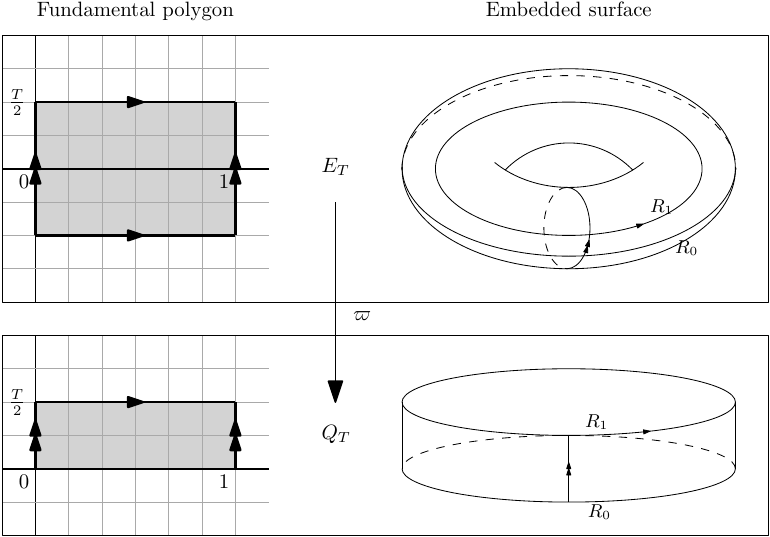}
    \caption{The real elliptic target and its annular quotient.  The upper panels represent the torus $E_T$, while the lower panels represent $Q_T=E_T/\iota$; the fixed ovals $R_0$ and $R_1$ become the two boundary components of the quotient annulus.}
    \label{fig:real-elliptic-annulus}
\end{figure}

\begin{definition}\label{def:real-elliptic-cover}
A degree-$d$ \emph{real elliptic generalized covering} of $(E_T,\iota)$ is a commutative diagram
\begin{equation}\label{eq:real-elliptic-cover-diagram}
\begin{array}{ccc}
\widetilde\Sigma&\xrightarrow{\ \widetilde F\ }&E_T\\
\downarrow\pi_\Sigma&&\downarrow \varpi\\
\Sigma&\xrightarrow{\ F\ }&Q_T
\end{array}
\end{equation}
where $\pi_\Sigma:\widetilde\Sigma\to\Sigma$ is the orientation double cover with fixed-point-free deck involution $\jmath$, the restriction
\[
F:\Sigma\setminus F^{-1}(\partial Q_T)\longrightarrow\operatorname{int}(Q_T)
\]
is an ordinary $2d$-sheeted covering, and $\widetilde F$ is an ordinary branched covering of degree $2d$, all of whose branch values lie in $E_T^\iota$, satisfying
\begin{equation}\label{eq:real-equivariance}
\widetilde F\circ\jmath=\iota\circ\widetilde F.
\end{equation}
A generalized branch value is a point of $\partial Q_T=\varpi(E_T^\iota)$ whose lift to $E_T^\iota$ has doubled ramification profile $[2]\lambda$ for $\widetilde F$; its profile is $\lambda\vdash d$.  It is simple when $\lambda=(2,1^{d-2})$.  For $d<2$ this simple profile does not exist: whenever $r>0$ we therefore declare the corresponding simple-branching set to be empty, while for $r=0$ we retain the unramified degree-$d$ objects (including the empty object for $d=0$).  Isomorphisms are homeomorphisms of the source commuting with the whole diagram.  Disconnected objects are finite disjoint unions, with the same conventions as above.
\end{definition}

As in the genus-zero case, the generalized degree is half the degree on the orientation double cover.  Equivariance pairs distinct points $x,\jmath x$ above every fixed point of $\iota$ with the same local degree, which explains the doubled profile $[2]\lambda$. Globally, the annulus $Q_T$ plays in genus one the role played by the hemisphere in the genus-zero construction: the quotient of the real sphere by its standard orientation-reversing involution is a disk, whose boundary is the real equator, whereas the quotient of the torus is an annulus, whose two boundary components are the two real ovals. The essential difference is that the annulus retains one nontrivial cycle.  This is precisely the extra topology that must be removed before reducing to the spherical theory: cutting $Q_T$ along a spanning arc joining its two boundary components produces a disk, while upstairs the lift of this arc is a nonseparating circle whose removal turns the torus into a cylinder.  The two sides created by this cut are responsible for the equal profiles $(\mu,\mu)$ that appear below.

For the enumerative problem, fix distinct labelled points $\mathbf x=(x_1,\ldots,x_r)\subset \varpi(R_0)\subset\partial Q_T$.  Let $\mathfrak H_{d,r}(E_T,\iota;\mathbf x)$ denote the set of isomorphism classes of possibly disconnected degree-$d$ real elliptic generalized coverings for which the $x_i$ are exactly the generalized branch values and each has simple profile $(2,1^{d-2})$, with the $d<2$ convention above.  For a representative $F$ we write $\Aut(F)$ for its finite automorphism group.  Up to isotopy preserving the boundary components, the precise positions of the branch values on this fixed boundary component are immaterial; relabelling shows that the numerical count below is independent of their prescribed cyclic order.

\begin{remark}
Each generalized simple branch value gives two ordinary simple ramification points upstairs.  Since $\chi(E_T)=0$ and $\widetilde\Sigma$ is the orientation double cover,
\begin{equation}\label{eq:real-RH}
\chi(\widetilde\Sigma)=-2r, \qquad \chi(\Sigma)=-r.
\end{equation}
\end{remark}

To obtain a cylinder presentation of a real elliptic generalized covering, choose a properly embedded spanning arc $a\subset Q_T$ joining the two boundary components and disjoint from the branch values.  We call such a spanning arc an \emph{admissible cut}.  Put
\begin{equation}\label{eq:invariant-seam}
\gamma_a:=\varpi^{-1}(a)\subset E_T.
\end{equation}
Then $\gamma_a$ is an $\iota$-invariant nonseparating circle, and cutting $E_T$ along $\gamma_a$ gives an oriented cylinder whose quotient is the disk obtained by cutting $Q_T$ along $a$.  No connected component $C$ of $\widetilde F^{-1}(\gamma_a)$ can be invariant under $\jmath$.  Indeed, $\widetilde F|_C:C\to\gamma_a$ is a covering and $\iota|_{\gamma_a}$ is orientation reversing; by equivariance, $\jmath|_C$ would then be an orientation-reversing involution of the circle $C$.  Every orientation-reversing homeomorphism of $S^1$ has a fixed point, contradicting the fact that $\jmath$ is the fixed-point-free deck involution of the orientation double cover.  The seam circles occur therefore in $\jmath$-exchanged pairs of equal degree.  More explicitly, write
\[
\widetilde F^{-1}(\gamma_a) =\bigsqcup_{s=1}^{\ell}(C_s\sqcup\jmath C_s), \qquad \deg(C_s\to\gamma_a)=j_s.
\]
Since $\widetilde F$ has degree $2d$, one has $\sum_s j_s=d$, and the seam therefore determines the partition
\[
\mu=(j_1,\ldots,j_{\ell})=(1^{m_1}2^{m_2}\cdots)\vdash d.
\]
After cutting, each $C_s$ produces a degree-$j_s$ boundary circle at each end of the cylinder.  Capping a target boundary circle by a disk extends this boundary cover, uniquely up to isomorphism, by the standard branched disk map $z\mapsto z^{j_s}$.  Hence the center of each target cap has generalized profile $\mu$; on the orientation double cover its ordinary profile is $[2]\mu$, with every part $j_s$ occurring twice.  The two profiles are automatically equal because the two cylinder boundaries are the two copies of the same cut seam.  Thus equivariant capping produces a generalized double covering of the sphere with distinguished profiles $(\mu,\mu)$.  The converse sewing construction and its automorphisms are made precise in Proposition~\ref{prop:cut-cap} below.  Figure~\ref{fig:real-elliptic-cut-cap} summarizes this passage.

\begin{figure}[h!]
\centering
\includegraphics[width=0.95\textwidth]{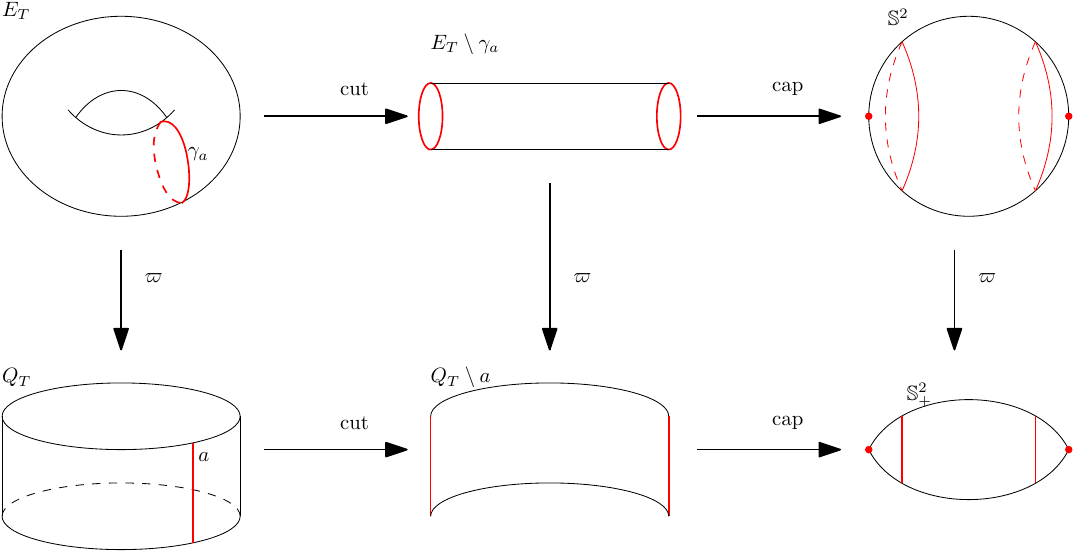}
\caption{Cut--cap reduction of the real elliptic target to the spherical theory.  Cutting the torus along the lift $\gamma_a$ of a spanning arc $a$ in the quotient annulus produces a cylinder with two distinguished boundary circles; capping these boundaries produces a sphere.  The two copies of the cut seam give equal distinguished generalized profiles over the cap centers.}
\label{fig:real-elliptic-cut-cap}
\end{figure}

We next realize this genus-one closure as the weighted count of genuine real elliptic generalized coverings.  Let $C$ have equal distinguished profile $\mu$.  After removing the two caps, each degree-$j$ part $B$ determines a pair $(C_B^0,C_B^1)$ of degree-$j$ boundary circle covers exchanged by $\jmath$.  Denote by $B_j^+(C)$ and $B_j^-(C)$ the sets of degree-$j$ parts at the two distinguished values, respectively; thus
\[
|B_j^+(C)|=|B_j^-(C)|=m_j(\mu).
\]

\begin{definition}\label{def:closure-datum}
For two degree-$j$ boundary orbits $B^+,B^-$ at opposite ends, let $\operatorname{Iso}_{\mathrm{eq}}(B^+,B^-)$ be the set of isomorphisms between their two-circle covers of the seam, commuting with the deck involutions and covering the prescribed identification of the target boundaries.  The finite set $\operatorname{Sew}(C)$ of \emph{equivariant elliptic sewing data} consists of a degree-preserving matching $\psi_j:B_j^+(C)\to B_j^-(C)$ for every $j$, together with an element of $\operatorname{Iso}_{\mathrm{eq}}(B,\psi_j(B))$ for each matched orbit $B$.
\end{definition}

After temporarily ordering each pair of circles, an equivariant isomorphism has a direct or crossed pairing.  For either pairing, the isomorphisms of one degree-$j$ circle cover form a torsor under its cyclic deck group, and the second isomorphism is forced by equivariance.  As a consequence, there are $2j$ choices.  The group $\Aut(C)$ acts by transport of the matching and conjugation of the boundary isomorphisms.

\begin{proposition}\label{prop:cut-cap}
Fix an admissible spanning arc $a\subset Q_T$.  Cut--cap induces a bijection
\begin{equation}\label{eq:cut-cap-classification}
\mathfrak H_{d,r}(E_T,\iota;\mathbf x) \simeq \bigsqcup_{\mu\vdash d} \ \bigsqcup_{\substack{[C]\text{ unlabelled of type }(\mu,\mu;r)}} \operatorname{Sew}(C)/\Aut(C).
\end{equation}
\end{proposition}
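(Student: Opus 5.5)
The plan is to build two maps, cut--cap $\Phi$ from left to right and sewing $\Psi$ from right to left, and to show that they are mutually inverse on isomorphism classes. First, $\Phi$. Given a real elliptic generalized covering $F$, I pull the invariant seam $\gamma_a$ back along $\widetilde F$. By the discussion preceding Definition~\ref{def:closure-datum}, $\widetilde F^{-1}(\gamma_a)$ splits into $\jmath$-exchanged pairs $C_s\sqcup\jmath C_s$ of degrees $j_s$, and these determine $\mu\vdash d$. Cutting $\widetilde\Sigma$ along these circles and $E_T$ along $\gamma_a$ gives a $\jmath$-equivariant branched cover of a cylinder. I then cap each boundary circle equivariantly by the model $z\mapsto z^{j}$, unique up to isomorphism, and obtain an ordinary branched cover of $\mathbb S^2$ with the reflection $\varpi_0$. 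This is a Chapuy--Do\l\k{e}ga generalized covering whose distinguished values (the two cap centers) have profile $\mu$, with simple values at the $x_i$. Since $a$ avoids the branch values and the $x_i$ lie on $\varpi(R_0)$, they land on the equator. Proposition~\ref{prop:covering-constellation-correspondence} then turns this covering into an unlabelled constellation $C$ of type $(\mu,\mu;r)$.

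The cut also carries data: for each pair of boundary orbits at opposite ends coming from the same seam pair, the original gluing is an equivariant isomorphism covering the target identification. Together with the induced matching $\psi_j$, this gives an element of $\operatorname{Sew}(C)$. The isomorphism used to identify the capped surface with a fixed representative $C$ is only defined up to $\Aut(C)$, so the invariant of $F$ is an $\Aut(C)$-orbit in $\operatorname{Sew}(C)$. An isomorphism $F\cong F'$ commutes with $\widetilde F$ and therefore preserves the seam preimage, so it induces an isomorphism of the capped covers that transports the sewing data. Hence $\Phi$ is well defined on classes.

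Second, $\Psi$. Given $C$ and $\sigma\in\operatorname{Sew}(C)$, I remove small invariant discs around the two cap centers (these are branched only there, so removal leaves circle covers) and glue the boundary orbits according to $\psi_j$ and the chosen equivariant isomorphisms. Because each gluing commutes with the deck involution and covers the identification of the two cut sides of $E_T$, the result is a surface $\widetilde\Sigma$ with a fixed-point-free involution and an equivariant branched cover of $E_T$. I must check three things:
\begin{itemize}
\item The quotient is the orientation double cover of $\Sigma$. Orientations glue compatibly because the sewing maps cover an orientation-preserving identification upstairs and $\jmath$ reverses orientation.
\item The branch values are exactly the $x_i$ with simple profile.
\item Changing $\sigma$ within its $\Aut(C)$-orbit, or replacing $C$ by an isomorphic constellation, yields an isomorphic covering.
\end{itemize}
That $\Phi\circ\Psi$ and $\Psi\circ\Phi$ are the identity is then local: cutting along the seam undoes sewing, and capping/uncapping are inverse by the uniqueness of the disk extension $z\mapsto z^j$.

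The main obstacle is making the correspondence between isomorphisms precise, so that the fiber is exactly $\operatorname{Sew}(C)/\Aut(C)$ and not a further quotient or cover. Concretely, I will show that if $\Psi(C,\sigma)\cong\Psi(C,\sigma')$, then the isomorphism preserves the seam preimage and so restricts to an automorphism $g$ of the cut-and-capped cover, with $g\cdot\sigma=\sigma'$. This uses that an isomorphism of real coverings commutes with $\widetilde F$ and hence maps $\widetilde F^{-1}(\gamma_a)$ to itself. It also uses that an automorphism of the cylinder cover extends uniquely over the caps, which holds by uniqueness of the cap extension in the connected branched disk model. Conversely, any $g\in\Aut(C)$ restricted to the cut cylinder conjugates the sewing data and extends to an isomorphism of the sewn coverings.

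Some care is needed in two places. Automorphisms must be allowed to permute parts of equal degree, which is why the matchings $\psi_j$ are part of the data. The count of $2j$ equivariant isomorphisms (direct or crossed pairing times the cyclic deck torsor) must match the torsor structure used in the orbit description. Finally, the degenerate cases $d<2$ with $r>0$ are empty on both sides, and for $r=0$ the unramified objects are handled by the same argument.
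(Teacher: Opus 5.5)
Your proposal is correct and follows the same route as the paper. Both cut along $\widetilde F^{-1}(\gamma_a)$, cap equivariantly by $z\mapsto z^j$, apply Proposition~\ref{prop:covering-constellation-correspondence}, record the forgotten boundary identifications as an element of $\operatorname{Sew}(C)$, and invert by sewing; your added verification that an isomorphism of sewn covers preserves the seam preimage and restricts to an element of $\Aut(C)$ carrying one sewing datum to the other is what the paper leaves implicit in its final sentence and in the subsequent identification $\Aut(F)\simeq\operatorname{Stab}_{\Aut(C)}(g)$.
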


\begin{proof}
Cutting the target along $\gamma_a$ and the source along $\widetilde F^{-1}(\gamma_a)$ gives an equivariant cylinder covering.  By the preceding discussion, its seam circles occur in $\jmath$-exchanged pairs of equal degree and determine a partition $\mu\vdash d$.  Capping a degree-$j$ boundary circle uses, up to isomorphism, the unique branched disk cover $z\mapsto z^j$; after equivariant capping and passage to the quotient, Proposition~\ref{prop:covering-constellation-correspondence} therefore gives an unlabelled double constellation $C$ of type $(\mu,\mu;r)$.

Capping forgets exactly how equal-degree boundary orbits at the two ends were identified.  For each matched degree-$j$ pair there are precisely the $2j$ equivariant identifications forming $\operatorname{Iso}_{\mathrm{eq}}(B^+,B^-)$, so the lost data are exactly an element $g\in\operatorname{Sew}(C)$.  Conversely, removing the two distinguished caps and sewing according to $g$ reconstructs the cylinder covering and hence the original real elliptic covering.  Thus isomorphism classes are precisely the $\Aut(C)$-orbits in $\operatorname{Sew}(C)$, proving \eqref{eq:cut-cap-classification}.

\end{proof}

The cut--cap correspondence is also compatible with automorphisms.  If a real elliptic cover $F$ corresponds to the $\Aut(C)$-orbit of $g\in\operatorname{Sew}(C)$, restriction and extension across the standard caps give a canonical identification
\[
\Aut(F)\simeq\operatorname{Stab}_{\Aut(C)}(g)=\Aut(C,g).
\]
Indeed, an automorphism of $C$ descends to the sewn cover precisely when it preserves the sewing datum $g$, while cutting an automorphism of $F$ gives such a stabilizing automorphism of $C$.

\subsection{Sewing and the geometric realization}\label{subsec:geometric-sewing}

We now place the $b$-weight on the sewing presentations supplied by the cut--cap correspondence and prove that their automorphism-weighted sum reproduces the genus-one closure of Section~\ref{subsec:genus-one-closure}.

The spherical MON supplies the weight of the capped object.  To sew entire boundary circles we use the following explicit rule on surfaces with boundary; this rule requires no extension of an edge-deletion MON to a noncellular or partially sewn map.

\begin{lemma}\label{lem:boundary-sewing-topology}
Fix a matching of two degree-$j$ boundary orbits in a partially sewn presentation.  Its $2j$ equivariant boundary isomorphisms have the following effects on the quotient source.
\begin{enumerate}[label=\textup{(\roman*)}]
\item If the boundary circles belong to distinct connected components, every sewing joins those components.  The result is orientable exactly when both components were orientable.
\item If they belong to one orientable component, exactly $j$ sewings keep it orientable and exactly $j$ make it nonorientable.
\item If they belong to one nonorientable component, every sewing leaves it nonorientable.
\end{enumerate}
In the last two cases the number of components is unchanged.  These classifications are intrinsic and are preserved by isomorphisms of the presentation.
\end{lemma}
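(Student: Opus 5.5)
The plan is to work in the quotient source $\Sigma$ with a local model near each seam. A degree-$j$ boundary orbit $B$ consists of two circles $C_B^0,C_B^1=\jmath C_B^0$ in $\widetilde\Sigma$, and these descend to a single boundary circle $\beta_B$ of $\Sigma$. The seam $\gamma_a$ is $\iota$-invariant and $\varpi(\gamma_a)=a$ is an arc. Consequently each source seam circle $C_s$ maps homeomorphically to a circle in $\Sigma$, and that circle double-covers its image arc after quotienting. In $\Sigma$ itself the seam therefore appears as an ordinary boundary circle. An equivariant isomorphism $B^+\to B^-$ descends to a homeomorphism $\beta_{B^+}\to\beta_{B^-}$ covering the identification of target boundaries, and sewing in $\widetilde\Sigma$ along it is the orientation double cover of sewing $\Sigma$ along the descended map.

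The first step is to check that the $2j$ equivariant isomorphisms split as $j$ direct and $j$ crossed, as in the counting remark before Proposition~\ref{prop:cut-cap}. The second step is to identify, on the quotient, which of these glue $\beta_{B^+}$ to $\beta_{B^-}$ by an orientation-reversing map relative to chosen local orientations, and which by an orientation-preserving map.

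The key observation concerns a local orientation of $\Sigma$ near $\beta_{B^\pm}$, which is the same thing as choosing one of the two sheets $C^0,C^1$ upstairs. Switching from the direct pairing to the crossed pairing composes the descended gluing with the effect of $\jmath$, and this switches the induced orientation compatibility. Within each pairing, the $j$ choices differ by deck rotations of a degree-$j$ circle cover, so they all have the same orientation type.

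With this in hand, the cases follow from standard facts about gluing two boundary circles of surfaces. For (i), gluing circles in distinct components always yields a connected sum-type union. The result is orientable iff both pieces are, because one can flip the orientation of a whole component to match any gluing, so every one of the $2j$ gluings joins the components with that orientability. For (ii), with one orientable component, fix an orientation. Gluing two boundary circles of it is orientable iff the gluing map reverses the induced boundary orientations; otherwise we get a Klein-bottle-type handle. By the key observation exactly one pairing type, which means $j$ isomorphisms, has that property. For (iii), a nonorientable component contains an orientation-reversing loop that avoids the boundary, and gluing cannot remove it, so the result stays nonorientable. In (ii) and (iii) both circles lie in one component, so the number of components is unchanged. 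Invariance under isomorphisms of the presentation is immediate, because the classification is defined through orientability and components of the sewn quotient surface, and an isomorphism transports sewings to sewings homeomorphically.

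The main obstacle is the key observation. It requires a careful argument that the direct-versus-crossed dichotomy is exactly the orientation-compatibility dichotomy downstairs, and that it does not depend on the temporary ordering of circles. I would handle this by recording local orientations as sheet choices. For a connected orientable component, the orientation double cover is two copies of it interchanged by $\jmath$, and the orientation is chosen compatibly across both boundary orbits. In that setting, direct sewing stays within a sheet copy and gives the orientable result, while crossed sewing links the two copies into a connected $\widetilde\Sigma$ and gives the nonorientable result (or the reverse, depending on the convention). Changing the ordering at one end swaps the labels direct and crossed but preserves the count $j$ for each type.
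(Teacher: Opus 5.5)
Your proposal is correct and follows essentially the same route as the paper. You identify a local orientation of the quotient near the seam with a choice of sheet, so that direct versus crossed pairings realize the two orientation types, with the $j$ deck-rotated choices in each type having the same effect, and you then apply the standard facts about gluing boundary circles of one or two components; your argument in case (ii) via connectivity of the sewn orientation double cover just makes explicit the step the paper states in one line.
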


\begin{proof}
The quotient of a $\jmath$-exchanged pair of seam circles is one boundary circle.  The direct and crossed equivariant pairings induce the two possible orientation types of its identification with the matched circle.  Within either type the $j$ choices differ by a deck rotation, so are isotopic as boundary identifications and have the same topological effect.  On an orientable component, fix either orientation.  Gluing two of its boundary circles gives an orientable surface exactly when the identification reverses their induced boundary orientations; the other orientation type gives a nonorientable surface.  The criterion is unchanged if the chosen orientation is reversed.  For two distinct orientable components one may reverse the orientation of either component, so both types of identification give an orientable union.  A nonorientable component remains nonorientable after boundary sewing, since an orientation of the result would restrict to an orientation of its interior.  Finally, identifying two boundary circles joins their components exactly when those components were distinct.  This proves the assertions.
\end{proof}

Let $m=\ellpart(\mu)$ and fix a total order $\prec$ of the positive-end boundary orbits.  Remove all distinguished caps and sew according to $g\in\operatorname{Sew}(C)$ in this order.  The quotient source at each stage is a compact surface with boundary.  We assign the $k$-th sewing the factor
\begin{equation}\label{eq:boundary-sewing-rule}
s_k(g,\prec)=
\begin{cases}
1,&\text{one orientable component remains orientable},\\
b,&\text{one orientable component becomes nonorientable},\\
(1+b)/2,&\text{the circles lie in one nonorientable component},\\
(1+b)/2,&\text{the circles lie in distinct components}.
\end{cases}
\end{equation}
In the last case the factor agrees with the change of component normalization,
\[
\frac{(2/(1+b))^{c-1}}{(2/(1+b))^c}=\frac{1+b}{2}.
\]
In every case, Lemma~\ref{lem:boundary-sewing-topology} gives
\begin{equation}\label{eq:one-boundary-sum}
\sum_{h\in\operatorname{Iso}_{\mathrm{eq}}(B^+,B^-)}s(h)=j(1+b),
\end{equation}
where the preceding partial sewing is held fixed.

Define
\begin{equation}\label{eq:order-averaged-sewing-weight}
s_b(g):=\frac1{m!}\sum_{\prec}\prod_{k=1}^m s_k(g,\prec),
\end{equation}
with $s_b(g)=1$ for $m=0$, where the sum is over all total orders of the positive-end boundary orbits.  Since \eqref{eq:boundary-sewing-rule} depends only on connectivity and orientability, an automorphism of $C$ permutes these orders and carries every ordered sewing sequence to one with the same product of local factors.  Hence
\[
s_b(\varphi\cdot g)=s_b(g), \qquad \varphi\in\Aut(C).
\]
Thus, for the fixed cut $a$, the quantity
\[
w_{b,a}(F):=w_b(C)s_b(g)
\]
is well defined on the isomorphism class of the real elliptic cover $F$ represented by $(C,g)$.  We define
\begin{equation}\label{eq:real-weighted-cardinality}
\#_{b,a}\mathfrak H_{d,r}(E_T,\iota;\mathbf x) :=\sum_{[F]\in\mathfrak H_{d,r}(E_T,\iota;\mathbf x)} \frac{w_{b,a}(F)}{|\Aut(F)|}.
\end{equation}
\begin{theorem}\label{thm:geometric-gluing}
For every $d,r\geq0$, $b\geq0$, $T>0$, admissible cut $a$ and coherent MON,
\begin{equation}\label{eq:geometric-diagonal-gluing}
\#_{b,a}\mathfrak H_{d,r}(E_T,\iota;\mathbf x) =\Hb_{1,b}(d,r) =\sum_{\mu\vdash d} z_\mu(1+b)^{\ellpart(\mu)} \Hb_{0,b}(\mu,\mu,r).
\end{equation}
\end{theorem}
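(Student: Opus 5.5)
The second equality is Proposition~\ref{prop:cylinder-closure}, so only the first equality needs proof. The plan is to show that
\[
\#_{b,a}\mathfrak H_{d,r}(E_T,\iota;\mathbf x)=\sum_{\mu\vdash d}z_\mu(1+b)^{\ellpart(\mu)}\sum_{[C]}\frac{w_b(C)}{|\Aut(C)|},
\]
where $[C]$ runs over unlabelled double constellations of type $(\mu,\mu;r)$. Lemma~\ref{lem:CD-unlabelled-normalization} then identifies the inner sum with $\Hb_{0,b}(\mu,\mu,r)$.

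\textbf{Step 1: convert orbits to a sum over sewing data.} By Proposition~\ref{prop:cut-cap}, classes $[F]$ correspond bijectively to pairs $([C],\text{$\Aut(C)$-orbit of } g\in\operatorname{Sew}(C))$. We have $\Aut(F)\simeq\Aut(C,g)$, and $w_{b,a}(F)=w_b(C)s_b(g)$ is $\Aut(C)$-invariant. Orbit--stabilizer turns the sum over an orbit into
\[
\sum_{[F]\leftrightarrow C}\frac{w_{b,a}(F)}{|\Aut(F)|}=\frac{w_b(C)}{|\Aut(C)|}\sum_{g\in\operatorname{Sew}(C)}s_b(g).
\]
It therefore suffices to prove the purely local identity
\[
\sum_{g\in\operatorname{Sew}(C)}s_b(g)=z_\mu(1+b)^{\ellpart(\mu)}
\]
for each fixed $C$.

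\textbf{Step 2: the local identity.} Write $g=(\psi,(h_B)_B)$. The number of matchings is $\prod_j m_j(\mu)!$. For a fixed matching, expand $s_b(g)$ as the average over total orders $\prec$, and exchange the sums over $g$ and over $\prec$. For a fixed order and matching, sum over the boundary isomorphisms $h_{B_1},\ldots,h_{B_m}$ from the last sewing backwards. The factor $s_k$ depends only on the partially sewn surface produced by $h_{B_1},\ldots,h_{B_{k-1}}$ together with $h_{B_k}$. Hence \eqref{eq:one-boundary-sum} lets us sum out $h_{B_m}$ to get $j_m(1+b)$, then $h_{B_{m-1}}$, and so on. Each ordered product thus sums to $\prod_s j_s(1+b)=(1+b)^m\prod_j j^{m_j}$, independently of the order, so the average over orders equals the same quantity. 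Multiplying by the number of matchings gives $z_\mu(1+b)^{\ellpart(\mu)}$.

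\textbf{Main obstacle.} The delicate point is justifying \eqref{eq:one-boundary-sum} at every intermediate stage. The sewn quotient surfaces have boundary and are not cellular maps, so we rely on Lemma~\ref{lem:boundary-sewing-topology}. In its cases (i) and (iii) all $2j$ sewings have factor $(1+b)/2$, giving a total of $j(1+b)$. In case (ii) we get $j\cdot1+j\cdot b$. The lemma's classification depends only on the current partial sewing, which is exactly what the backward summation needs.

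Two further checks are needed. First, the component normalization $(2/(1+b))^{c}$ inside $w_b(C)$ must be correctly accounted for by the $(1+b)/2$ factors. This is built into the definition of $s_k$ and requires no further verification, since we only need the sum identity. Second, the degenerate cases must come out right: for $d<2$ with $r>0$ both sides vanish, and for $m=0$ the convention $s_b=1$ applies.

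Independence of $T$, of $a$ and of the coherent MON then follows, because the right-hand side does not depend on them.
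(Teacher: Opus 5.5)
Your proposal is correct and follows the paper's proof essentially step for step. Both arguments proceed in the same way:
\begin{itemize}
\item weighted orbit--stabilizer via Proposition~\ref{prop:cut-cap} and $\Aut(F)\simeq\Aut(C,g)$;
\item backward summation using \eqref{eq:one-boundary-sum} for each fixed order and matching, giving $\sum_{g\in\operatorname{Sew}(C)}s_b(g)=z_\mu(1+b)^{\ellpart(\mu)}$;
\item Lemma~\ref{lem:CD-unlabelled-normalization} together with Proposition~\ref{prop:cylinder-closure} to conclude.
\end{itemize}
The only difference is the order of presentation: you do the orbit--stabilizer reduction before the local sewing computation, whereas the paper does it after.
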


\begin{proof}
Fix $C$ with common profile $\mu=(1^{m_1}2^{m_2}\cdots)$, an order $\prec$ and a degree-preserving matching.  For any fixed history of partial sewings, \eqref{eq:one-boundary-sum} says that summing all isomorphisms at a degree-$j$ orbit gives $j(1+b)$.  Sum the last sewing first, and then proceed backwards through the order.  This backward induction is valid even though the topology at a later step depends on the earlier choices, because \eqref{eq:one-boundary-sum} is the same for every possible history.  It gives the product of the factors $j(1+b)$ over all matched orbits.  There are $m_j(\mu)!$ matchings for each $j$, so for every fixed order $\prec$,
\[
\sum_{g\in\operatorname{Sew}(C)}\prod_{k=1}^m s_k(g,\prec) =\prod_{j\geq1}m_j(\mu)![j(1+b)]^{m_j(\mu)}.
\]
Averaging over the $m!$ orders in \eqref{eq:order-averaged-sewing-weight} now gives
\begin{equation}\label{eq:gluing-factor}
\sum_{g\in\operatorname{Sew}(C)}s_b(g) =\prod_{j\geq1}m_j(\mu)![j(1+b)]^{m_j(\mu)} =z_\mu(1+b)^{\ellpart(\mu)}.
\end{equation}
Thus the three factors in the gluing constant come from matching equal-degree boundary orbits, cyclic covering isomorphisms, and the summed orientation choices in \eqref{eq:boundary-sewing-rule}.

For fixed $C$, Proposition~\ref{prop:cut-cap} and weighted orbit--stabilizer give
\[
\sum_{[g]\in\operatorname{Sew}(C)/\Aut(C)} \frac{s_b(g)}{|\operatorname{Stab}_{\Aut(C)}(g)|} =\frac1{|\Aut(C)|}\sum_{g\in\operatorname{Sew}(C)}s_b(g).
\]
Multiplying by $w_b(C)$, summing over $C$, and using Lemma~\ref{lem:CD-unlabelled-normalization} gives \eqref{eq:geometric-diagonal-gluing}.  Its right-hand side depends only on $d,r,b$, proving all the stated independence properties.
\end{proof}

For a cover $F$, let $c(F)$ be the number of connected components of its quotient source and set
\begin{equation}\label{eq:normalized-elliptic-weight}
\widetilde W_{b,a}(F) :=\left(\frac{1+b}{2}\right)^{c(F)}w_{b,a}(F).
\end{equation}

\begin{proposition}\label{prop:elliptic-weight-properties}
For a coherent integral MON, $\widetilde W_{b,a}(F)\in\Q_{\geq0}[b]$.  For every coherent MON the weights $w_{b,a}$ are multiplicative under disjoint union.  For an integral MON, their specializations at $b=0$ and $b=1$ are
\begin{equation}\label{eq:elliptic-weight-specializations}
w_{1,a}(F)=1, \qquad w_{0,a}(F)=2^{c(F)}\one_{\{\Sigma\text{ is orientable}\}}.
\end{equation}
Here an empty source has $c(F)=0$ and is orientable.
\end{proposition}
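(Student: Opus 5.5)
The plan is to work directly from the factorization $w_{b,a}(F)=w_b(C)s_b(g)$ for a representative $(C,g)$ of $F$. Write $\widetilde W_{b,a}(F)$ as $\overline\rho_b(C)$ times a renormalized sewing weight. Since $c(F)=c(C)-\#\{\text{sewings joining distinct components}\}$, the identity
\[
\left(\tfrac{1+b}{2}\right)^{c(F)}w_b(C)s_b(g)=\overline\rho_b(C)\cdot\frac1{m!}\sum_{\prec}\prod_{k}\widetilde s_k(g,\prec)
\]
holds along any fixed order $\prec$. Here $\widetilde s_k=s_k\cdot\bigl(\tfrac{2}{1+b}\bigr)$ when the $k$-th sewing joins distinct components, and $\widetilde s_k=s_k$ otherwise. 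The number of joining sewings is the same for every order, since it equals $c(C)-c(F)$, so this identity is order independent.

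Positivity then follows. The renormalized factors lie in $\{1,b,(1+b)/2\}$, and $\overline\rho_b(C)\in\Q_{\geq0}[b]$ for an integral MON, as recorded before Lemma~\ref{lem:CD-unlabelled-normalization}. Hence every product, and the average over orders, lies in $\Q_{\geq0}[b]$.

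For multiplicativity, let $F=F_1\sqcup F_2$. Cutting along the same arc gives $C=C_1\sqcup C_2$ and $g=(g_1,g_2)$, because sewing data only match boundary orbits within each piece. We already have $w_b(C)=w_b(C_1)w_b(C_2)$ from \eqref{eq:spherical-weight-multiplicative}. For $s_b$, note that the local factor of a sewing depends only on the topology of the component(s) involved. So for a fixed order the product splits as a product over the orbits of $F_1$ and of $F_2$, and each part depends only on the induced sub-order. Averaging over the $m!$ orders, each pair of induced sub-orders occurs equally often, which gives $s_b(g)=s_b(g_1)s_b(g_2)$.

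For the specializations with an integral MON we use $\overline\rho_1=1$, and $\overline\rho_0(C)=\one_{\{\text{every component of }C\text{ is orientable}\}}$.

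At $b=1$ all renormalized factors equal $1$. Hence $\widetilde W_{1,a}=1$, and since $\bigl((1+b)/2\bigr)^{c(F)}=1$ at $b=1$, also $w_{1,a}(F)=1$.

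At $b=0$ the factors $(1+b)/2$ become $1/2$ and $b$ becomes $0$. The main point to check is that for every order $\prec$, the $b=0$ product is nonzero exactly when $\Sigma$ is orientable, and then equals $1$ after renormalization.
\begin{itemize}
\item A sewing inside one nonorientable component has factor $1/2$. It can only occur if $C$ already had a nonorientable component (killed by $\overline\rho_0$), or if an earlier sewing produced nonorientability (factor $b=0$).
\item Hence on the surviving terms every sewing is either orientation-preserving within a component (factor $1$) or a join of two orientable components. A join has renormalized factor $1$, by Lemma~\ref{lem:boundary-sewing-topology}(i), which keeps the result orientable.
\item Conversely, if $\Sigma$ is nonorientable then either $C$ has a nonorientable component or some sewing step first creates nonorientability, via case (ii) with factor $b$. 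Either way the product vanishes.
\end{itemize}
So $\widetilde W_{0,a}(F)=\one_{\{\Sigma\text{ orientable}\}}$. Dividing by $(1/2)^{c(F)}$ gives $w_{0,a}(F)=2^{c(F)}\one_{\{\Sigma\text{ orientable}\}}$. The empty source gives $m=0$, $c=0$ and weight $1$, consistent with the convention stated.
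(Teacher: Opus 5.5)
Your proof is correct and follows essentially the same route as the paper. You cancel the $(1+b)/2$ joining factors against $((1+b)/2)^{c(F)}$ and the spherical normalization, writing $\widetilde W_{b,a}(F)$ as $\overline\rho_b(C)$ times an order average of factors in $\{1,b,(1+b)/2\}$. Multiplicativity comes from factoring that average over interleavings, and the $b=1$ and $b=0$ values come from the orientability case analysis of the sewing steps. Your $b=0$ argument is slightly more explicit than the paper's about why the $1/2$ factors from case (iii) never survive. It rests on the same input the paper's proof uses, namely that $\overline\rho_0(C)=0$ when a capped component is nonorientable.
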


\begin{proof}
Write $F=(C,g)$ relative to the fixed cut.  Every ordered sewing has exactly $c(C)-c(F)$ steps joining distinct components, since only those steps change the number of components.  Each contributes $(1+b)/2$.  Cancelling these factors against \eqref{eq:normalized-elliptic-weight} and the spherical normalization gives
\begin{equation}\label{eq:normalized-weight-product}
\widetilde W_{b,a}(F)=\overline\rho_b(C)\frac1{m!}\sum_{\prec}\prod_{\substack{k:\text{the two circles belong}\\
\text{to the same component at step }k}}s_k(g,\prec).
\end{equation}
Every remaining factor is $1$, $b$ or $(1+b)/2$, proving polynomiality and positivity for an integral MON.

To prove multiplicativity, decompose the final quotient source into two unions of connected components, $F=F_1\sqcup F_2$.  Cutting gives $C=C^{(1)}\sqcup C^{(2)}$, and the sewing data split accordingly; a partial sewing never joins the two unions.  All factors in \eqref{eq:boundary-sewing-rule} depend only on the union in which they occur.  If the two sets of positive-end boundary orbits have sizes $m_1,m_2$, each pair of internal orders has exactly $\binom{m_1+m_2}{m_1}$ interleavings.  Dividing by $(m_1+m_2)!$ therefore factors the order average into the two averages with denominators $m_1!$ and $m_2!$.  Together with \eqref{eq:spherical-weight-multiplicative} this proves $w_{b,a}(F)=w_{b,a}(F_1)w_{b,a}(F_2)$.  The componentwise convention for unused branch labels ensures that these are the weights in the smaller simple-Hurwitz problems themselves.

At $b=1$, both the spherical normalized weight and every sewing factor equal $1$, so $w_{1,a}=1$.  At $b=0$, a nonorientable capped component has zero spherical weight.  If all capped components are orientable but the final source is not, some sewing first makes a component nonorientable and contributes $b$.  For an orientable final source, every internal sewing has factor $1$, every component-joining sewing has factor $1/2$, and the spherical weight is $2^{c(C)}$.  Their product is $2^{c(F)}$.  This proves \eqref{eq:elliptic-weight-specializations}.
\end{proof}

We can now extract covers with connected quotient source.  Define
\begin{equation}\label{eq:connected-elliptic-series}
\mathscr C_b(q;\hbar):=\log\mathscr H_b(q;\hbar), \qquad \Hb^\circ_{1,b}(d,r):=r![q^d\hbar^r]\mathscr C_b(q;\hbar).
\end{equation}
The logarithm is well defined as a formal series in $q$, since $\mathscr H_b(0;\hbar)=1$.

\begin{corollary}\label{cor:connected-quotient-covers}
For every $d\geq1$ and $r\geq0$,
\begin{equation}\label{eq:connected-quotient-count}
\Hb^\circ_{1,b}(d,r)
=\sum_{\substack{[F]\in\mathfrak H_{d,r}(E_T,\iota;\mathbf x)\\
\Sigma\text{ connected}}}\frac{w_{b,a}(F)}{|\Aut(F)|}.
\end{equation}
In particular, the connected weighted count is independent of the admissible cut and of the coherent MON after summation.
\end{corollary}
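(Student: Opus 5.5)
The plan is the standard exponential formula for automorphism-weighted counts with multiplicative weights. We start from Theorem~\ref{thm:geometric-gluing} and package all degrees and branch numbers at once:
\[
\mathscr H_b(q;\hbar)=\sum_{d,r\geq0}\frac{q^d\hbar^r}{r!}\sum_{[F]\in\mathfrak H_{d,r}(E_T,\iota;\mathbf x)}\frac{w_{b,a}(F)}{|\Aut(F)|}.
\]
The conventions of Definition~\ref{def:real-elliptic-cover} for $d<2$ and for $d=0$ match Definition~\ref{def:Hb}, so this identity holds coefficientwise. We then decompose each possibly disconnected cover $F$ into its connected components $F_1,\dots,F_k$, meaning components of the quotient source $\Sigma$; these correspond to $\jmath$-stable unions of components upstairs.

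The labelled simple branch values $\{1,\dots,r\}$ are then distributed among the components. Each component retains exactly the labels at which it ramifies, following the componentwise label convention used for the MON weights. Components with $r_i=0$ are unramified coverings, and their degrees $d_i$ add up. By Proposition~\ref{prop:elliptic-weight-properties}, $w_{b,a}(F)=\prod_i w_{b,a}(F_i)$. The automorphism group of a disjoint union is the product of the component automorphism groups and the permutations of isomorphic components (isomorphic including labels). Hence grouping by multiplicity classes yields the usual factor $1/\prod n_\tau!$.

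With these ingredients the exponential formula applies. The labels are an exponential structure, handled by the $\hbar^r/r!$ factor and set partitions of $\{1,\dots,r\}$. The unlabelled isomorphism-type multiplicities are handled by the automorphism weighting. The degree is an ordinary structure, handled by $q^d$. Together these give
\[
\mathscr H_b=\exp\Bigl(\sum_{d\geq1,r\geq0}\frac{q^d\hbar^r}{r!}\sum_{[F]\ \text{connected}}\frac{w_{b,a}(F)}{|\Aut(F)|}\Bigr).
\]
Here a connected nonempty cover has $d\geq1$, and the empty cover contributes the constant $1$. Taking logarithms and comparing with \eqref{eq:connected-elliptic-series} gives \eqref{eq:connected-quotient-count}. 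The left side of \eqref{eq:connected-quotient-count} depends only on $\mathscr H_b$, hence only on $d,r,b$. So the connected count is independent of $a$ and of the coherent MON.

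The main obstacle is justifying the exponential formula cleanly in the presence of labels. One point is that the positions $x_i$ are fixed on $\varpi(R_0)$, so a component ramifying at a subset $S$ of labels must be identified with an element of $\mathfrak H_{d_i,|S|}$ with relabelled points. I would use the remark that counts are independent of positions and cyclic order, via isotopy and relabelling. The other point is that automorphisms of $F$ fix each labelled branch point. Therefore only components with identical label sets can be permuted, and these are exactly the $r_i=0$ components. So the $1/n!$ symmetry factors arise only among unramified components of the same isomorphism type, which is precisely what the exponential of the ordinary series in $q$ produces. I would verify this carefully, and also check that $w_{b,a}$ of a component computed inside $F$ agrees with its weight as a standalone cover. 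That last check is the final sentence of the multiplicativity proof in Proposition~\ref{prop:elliptic-weight-properties}.
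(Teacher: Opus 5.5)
Your proposal is correct and is essentially the paper's own argument: decompose into components with connected quotient source, use multiplicativity of $w_{b,a}$ from Proposition~\ref{prop:elliptic-weight-properties}, the automorphism group of a disjoint union (component automorphisms times permutations of repeated isomorphic components), absorb the distribution of labels into $\hbar^r/r!$, and then apply Theorem~\ref{thm:geometric-gluing} and take logarithms. The points you flag are left implicit in the paper: that automorphisms can only permute unramified components, and that a component carrying labels $S$ must be identified with an element of $\mathfrak H_{d_i,|S|}$. The second is most cleanly settled by noting that Theorem~\ref{thm:geometric-gluing} holds for every configuration of marked points, so induction on $|S|$ through the same set-partition decomposition shows that the connected counts depend only on $d_i$ and $|S|$.
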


\begin{proof}
Every cover is a finite disjoint union of covers with connected quotient source.  Each labelled simple branch value belongs to exactly one component, and degrees add.  Proposition~\ref{prop:elliptic-weight-properties} makes the weights multiplicative.  For a fixed multiset of connected isomorphism classes, its automorphism group is the product of the component automorphism groups and the factorials permuting repeated isomorphic components.  Distributing the branch labels gives the usual multinomial factor, absorbed by $\hbar^r/r!$.  The exponential formula therefore identifies the disconnected generating series with the exponential of the right-hand side of \eqref{eq:connected-quotient-count}, summed with $q^d\hbar^r/r!$.  Theorem~\ref{thm:geometric-gluing} identifies that disconnected series with $\mathscr H_b$, proving the corollary.
\end{proof}

At $b=0$, choosing an orientation on each orientable quotient component identifies it with an ordinary oriented cover of $E_T$.  Forgetting these orientations puts a factor $2^{c(F)}$, so that \eqref{eq:elliptic-weight-specializations} recovers ordinary elliptic Hurwitz theory, both connected and disconnected.  At $b=1$ the count is the usual automorphism-weighted count of all real elliptic generalized covers.  A connected quotient source has connected orientation double exactly when it is nonorientable; hence, for $d\geq1$,
\begin{equation}\label{eq:connected-orientation-double-count}
\sum_{\substack{[F]\in\mathfrak H_{d,r}(E_T,\iota;\mathbf x)\\
\widetilde\Sigma\text{ connected}}}\frac1{|\Aut(F)|}
=\Hb^\circ_{1,1}(d,r)-\frac12\Hb^\circ_{1,0}(d,r).
\end{equation}
The factor $1/2$ follows by marking one of the two orientations of a connected orientable quotient source; the resulting objects are ordinary connected oriented covers.

Note that, in analogy to the genus-zero case of \cite{ChapuyDolega22}, the following stronger claim remains open.

\begin{conjecture}\label{conj:cut-independent-elliptic-weight}
There exists a coherent integral MON for the spherical input such that, for every real elliptic generalized cover $F$, the polynomial $\widetilde W_{b,a}(F)$ of \eqref{eq:normalized-elliptic-weight} is independent of the admissible cut $a$.
\end{conjecture}

\subsection{Twisted elliptic Hurwitz numbers}\label{subsec:twisted-elliptic-hurwitz}

We conclude this section with the zonal specialization.  At $b=1$, the preceding real elliptic count recovers the twisted elliptic Hurwitz numbers introduced by Hahn--Markwig \cite{HahnMarkwig26}.  We briefly recall their monodromy convention.  Fix
\[
\tau=(1\ d+1)(2\ d+2)\cdots(d\ 2d)\in\mathfrak S_{2d}, \qquad B_d:=C_{\mathfrak S_{2d}}(\tau), \qquad |B_d|=(2d)!!=2^d d!,
\]
and let $B_d^{\sim}$ be the set of $\sigma\in\mathfrak S_{2d}$ satisfying
$\tau\sigma\tau=\sigma^{-1}$ and having no cycle fixed by $\tau$.  The disconnected invariant $\widetilde h_{d,r+1}^{\bullet}$ of \cite[Definition~3]{HahnMarkwig26} is $(2d)!!^{-1}$ times the number of tuples
\[
(\sigma,\eta_1,\ldots,\eta_r,\beta), \qquad \sigma\in B_d^{\sim},\quad \beta\in B_d,
\]
where each $\eta_s=(i_sj_s)$ is a transposition with $j_s\neq\tau(i_s)$ and
\begin{equation}\label{eq:twisted-monodromy-relation}
\eta_1\cdots\eta_r\sigma (\tau\eta_r\tau)\cdots(\tau\eta_1\tau) =\beta\sigma\beta^{-1}.
\end{equation}
The connected invariant imposes transitivity of the group generated by the entries of this tuple and their $\tau$-conjugates.

Geometrically, $\eta_s$ and $\tau\eta_s\tau$ are the monodromies of a pair of reflected simple branch points, $\sigma$ records the invariant seam, and $\beta$ closes the transverse direction; this convention is illustrated in Figure~\ref{fig:twisted-elliptic-monodromy}.

\begin{figure}[h!]
\centering
\includegraphics[width=0.5\textwidth]{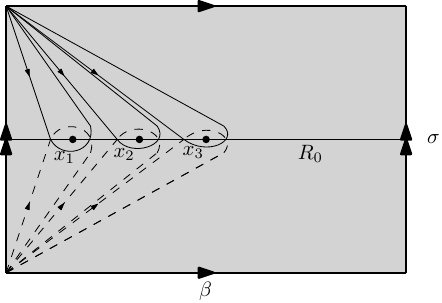}
\caption{Hahn--Markwig monodromy convention on a fundamental polygon of the real elliptic target $(E_T,\iota)$.  The simple branch values $x_1,x_2,x_3$ lie on the real oval $R_0$; the indicated loops encode the branch monodromies together with the seam and transverse monodromies.}
\label{fig:twisted-elliptic-monodromy}
\end{figure}

The following matching calculation identifies this monodromy count with the zonal elliptic $b$-Hurwitz trace.

\begin{lemma}\label{lem:twisted-matching-trace}
For every $d,r\geq0$,
\begin{equation}\label{eq:twisted-matching-trace}
\widetilde h_{d,r+1}^{\bullet} =\Tr_{\Lambda_d}\bigl((2\Dcal_2)^r\bigr) =2^r\Hb_{1,1}(d,r).
\end{equation}
For $d=0$, only the empty tuple with $r=0$ contributes.
\end{lemma}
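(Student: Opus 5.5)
The second equality is immediate: $\Tr_{\Lambda_d}((2\Dcal_2)^r)=2^r\Tr_{\Lambda_d}(\Dcal_2^r)=2^r\Hb_{1,1}(d,r)$ by Proposition~\ref{prop:cut-and-join-trace}. The work is in the first equality, which I would prove by a Frobenius-type computation in the Hecke algebra of the Gelfand pair $(\mathfrak S_{2d},B_d)$. This is the algebraic structure underlying zonal polynomials, and there double cosets $B_d\backslash\mathfrak S_{2d}/B_d$ are indexed by coset types $\mu\vdash d$.

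\textbf{Step 1: identify the elliptic sum as a class function.} I would first sum over $\beta$ in \eqref{eq:twisted-monodromy-relation}. For fixed $\sigma$ and fixed $\eta$'s, the number of $\beta\in B_d$ with $\beta\sigma\beta^{-1}=\sigma'$ is $|\mathrm{Stab}_{B_d}(\sigma)|$ if $\sigma'$ lies in the $B_d$-orbit of $\sigma$ under conjugation, and $0$ otherwise. So the count becomes $\sum_{\sigma}|\mathrm{Stab}_{B_d}(\sigma)|\cdot\#\{\eta:\ \Phi_\eta(\sigma)\in B_d\cdot\sigma\}$, where $\Phi_\eta(\sigma)=\eta_1\cdots\eta_r\sigma\,\tau\eta_r\tau\cdots\tau\eta_1\tau$. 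Each map $\sigma\mapsto\eta\sigma\tau\eta\tau$ preserves $B_d^{\sim}$: it preserves the relation $\tau\sigma\tau=\sigma^{-1}$, and the condition $j\neq\tau(i)$ is presumably what keeps the cycles of $\sigma$ from becoming $\tau$-fixed. This map commutes with $B_d$-conjugation. Hence it defines an operator $T$ on the permutation module $\C[B_d^{\sim}]$, whose $B_d$-invariant part is spanned by orbit sums $e_\mu$. The $B_d$-orbits on $B_d^{\sim}$ should correspond to partitions $\mu\vdash d$ via the $\jmath$-paired cycle structure of $\sigma$, that is, cycles of $\sigma$ come in $\tau$-exchanged pairs of length $\mu_i$, matching the seam picture of Proposition~\ref{prop:cut-cap}. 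Orbit–stabilizer then turns the count into $(2d)!!^{-1}\sum_{\sigma}|\mathrm{Stab}(\sigma)|\,\langle\text{orbit of }\sigma,T^r\rangle$, which equals $\Tr(T^r|_{\C[B_d^{\sim}]^{B_d}})$ after the normalisations cancel. The key fact is $\sum_{\sigma\in O}|\mathrm{Stab}(\sigma)|=|B_d|$ for each orbit $O$.

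\textbf{Step 2: identify $T$ with $2\Dcal_2$ in the power-sum basis.} Under $e_\mu\mapsto p_\mu$, I need the matrix of $T$ on orbit sums to equal the matrix of $2\Dcal_2$ on $(p_\mu)$. This is only needed up to conjugation, since only traces matter. This is the main obstacle. I would compute the local effect of $\sigma\mapsto\eta\sigma\tau\eta\tau$ on the paired cycle type of $\sigma$. If $i$ and $j$ lie in different pairs of lengths $a,b$, the two transpositions either merge them into one pair of length $a+b$; there are two ways, according to whether $j$ or $\tau(j)$ is used. If $i$ and $j$ lie in the same pair, the move either cuts a length $a+b$ pair into two pairs, or it is the "twisted" move within one cycle that preserves the length. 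I would then count the transpositions $(ij)$ with $j\ne\tau(i)$ realising each transition. The target coefficients are those listed in the proof of Theorem~\ref{thm:positivity} at $b=1$, times $2$: joins $2\cdot 2ij\,m_im_j$, cuts $2k\,m_k$, and diagonal $2\sum_i\binom i2 m_i$. A check at $d=1,2$ would calibrate factors of $2$ coming from ordered versus unordered pairs.

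\textbf{Step 3: close up.} With $T\simeq 2\Dcal_2$ on $\Lambda_d$, the trace equals $\Tr_{\Lambda_d}((2\Dcal_2)^r)$. For $d=0$, $B_0^{\sim}$ and $B_0$ are trivial, and transpositions cannot exist when $r>0$. So only the empty tuple contributes at $r=0$, giving $1$, which is consistent with the convention in Definition~\ref{def:Hb}.

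If the orbit bookkeeping in Step 2 proves messy, I would use a fallback: the zonal spherical functions $\omega_\lambda$ of the Gelfand pair diagonalise the Hecke action with eigenvalue $\cA_2(\lambda)$ (up to the factor $2$), as in Macdonald VII.2. The trace then equals $\sum_{\lambda\vdash d}(2\cA_2(\lambda))^r$ directly.
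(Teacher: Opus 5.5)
Your Step~1 is the same reduction the paper makes: summing over $\beta$ (the paper does this with the averaging projector $\mathsf P_d$) identifies $\widetilde h^{\bullet}_{d,r+1}$ with the trace of the admissible-transposition operator on the $B_d$-invariants of a permutation module. The gap is Step~2, which carries the substance of the lemma and which you only plan. Moreover, the one structural claim you make there is wrong.

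The move $\sigma\mapsto\eta\sigma\tau\eta\tau$ preserves $B_d^{\sim}$ for \emph{every} transposition, including $(i\,\tau(i))$. The reason is that $\sigma\mapsto\kappa=\sigma\tau$ is a bijection from $B_d^{\sim}$ onto the fixed-point-free involutions, i.e.\ perfect matchings of $\{1,\ldots,2d\}$. This bijection turns your move into plain conjugation $\kappa\mapsto\eta\kappa\eta$. Your invariance argument never actually rules out $\tau$-stable cycles, so it should be replaced by this bijection.

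The condition $j\neq\tau(i)$ therefore plays a different role from the one you guessed. The $d$ excluded transpositions $(i\,\tau(i))$ lie in $B_d$, so they act as the identity on $B_d$-invariants. Hence on the invariant subspace your $T$ equals the sum over all transpositions minus $d\,I$. This shift is exactly what makes the diagonal entry equal to $\sum_k k(k-1)m_k(\mu)$. Without the exclusion the trace would be $\sum_{\lambda\vdash d}(2\cA_2(\lambda)+d)^r$.

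To finish Step~2, work with matchings as the paper does. The $d$ transpositions forming the edges of $\kappa$ fix $\kappa$ and cancel the $-d\,I$. Every other transposition selects two distinct $\kappa$-edges and reconnects their four endpoints, and each of the two possible reconnections is induced by exactly two transpositions. The counts are:
\begin{itemize}
\item If the two edges lie in different alternating cycles of half-lengths $i,j$, both reconnections join them, giving $4ijm_im_j$ (resp.\ $4i^2\binom{m_i}{2}$ for $i=j$).
\item Inside one alternating cycle of half-length $k$, one reconnection splits it, giving $2km_k$ for $k=i+j$ with $i<j$ and $2im_{2i}$ for an equal split.
\item The other reconnection inside a cycle preserves the type, giving $k(k-1)$ per cycle.
\end{itemize}
These are exactly twice the $b=1$ coefficients of $\Dcal_2$. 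Hence, in the basis of orbit averages, $v_\mu\mapsto p_\mu$ intertwines $T$ with $2\Dcal_2$.

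Your fallback is also viable, and shorter, provided you include the same shift. The permutation module on matchings is $\mathrm{Ind}_{B_d}^{\mathfrak S_{2d}}\mathbf 1\cong\bigoplus_{\lambda\vdash d}S^{2\lambda}$, with one-dimensional $B_d$-fixed lines. The central sum of all transpositions acts on $S^{2\lambda}$ by the content sum $K(2\lambda)$, and $K(2\lambda)-d=2\cA_2(\lambda)$. This gives the trace $\sum_{\lambda\vdash d}(2\cA_2(\lambda))^r$ at once. However, it proves only equality of traces, not the entrywise identification with $2\Dcal_2$ in the power-sum basis that the paper's count provides.
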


\begin{proof}
Let $\mathcal M_d$ be the set of perfect matchings of $\{1,\ldots,2d\}$ and let $\mathscr V_d$ have basis $(e_\kappa)_{\kappa\in\mathcal M_d}$.  Writing $U_\gamma e_\kappa=e_{\gamma\kappa\gamma^{-1}}$, set
\[
\mathsf T_d:=\sum_{\substack{\eta=(ij)\\j\ne\tau(i)}}U_\eta,
\qquad
\mathsf P_d:=\frac1{|B_d|}\sum_{\beta\in B_d}U_\beta.
\]
The admissible transpositions are invariant under conjugation by $B_d$, so $\mathsf T_d$ commutes with $\mathsf P_d$, the projection onto $\mathscr V_d^{B_d}$.  The map $\sigma\mapsto\kappa=\sigma\tau$ is a bijection $B_d^{\sim}\to\mathcal M_d$: superposing the edges of $\tau$ and $\kappa$ produces alternating cycles whose half-lengths are precisely the paired cycle lengths of $\sigma$.  Multiplying \eqref{eq:twisted-monodromy-relation} by $\tau$ therefore turns it into
\[
\eta_1\cdots\eta_r\kappa\eta_r\cdots\eta_1 =\beta\kappa\beta^{-1},
\]
whence
\[
\widetilde h_{d,r+1}^{\bullet} =\Tr_{\mathscr V_d}(\mathsf P_d\mathsf T_d^r) =\Tr_{\mathscr V_d^{B_d}}(\mathsf T_d^r).
\]
The $B_d$-orbits of matchings are indexed by partitions $\mu\vdash d$, through the half-lengths of their alternating cycles.  If $\mathcal O_\mu$ is the orbit of type $\mu$, set $v_\mu=|\mathcal O_\mu|^{-1}\sum_{\kappa\in\mathcal O_\mu}e_\kappa$; then the coefficient from $v_\mu$ to $v_\nu$ is the number of admissible transpositions taking a fixed matching of type $\mu$ to type $\nu$.  On $\mathscr V_d^{B_d}$, the $d$ transpositions forming the edges of $\tau$ act trivially, hence $\mathsf T_d$ is the restriction of the sum over all transpositions minus $dI$.  For a fixed matching $\kappa$, the $d$ transpositions forming its edges fix $\kappa$, so this subtraction cancels precisely those diagonal terms.  What remains selects two distinct $\kappa$-edges and reconnects their four endpoints, each of the two reconnections being induced by exactly two transpositions.  Consequently the matrix is exactly twice the cut, join and diagonal matrix of \eqref{eq:Dalpha} at $\alpha=2$.  For example, joining two distinct alternating cycles of half-lengths $i<j$ has multiplicity $4ijm_i(\mu)m_j(\mu)$, twice the corresponding coefficient of $\Dcal_2$.  The remaining multiplicities are $4i^2\binom{m_i(\mu)}{2}$ for joining equal cycles, $2km_k(\mu)$ for a split $k=i+j$ with $i<j$, $2im_{2i}(\mu)$ for an equal split, and $\sum_k k(k-1)m_k(\mu)$ for type-preserving reconnections; these are likewise twice the corresponding coefficients of $\Dcal_2$.  Thus $v_\mu\mapsto p_\mu$ identifies $\mathsf T_d|_{\mathscr V_d^{B_d}}$ with $2\Dcal_2$, and taking traces proves the claim.
\end{proof}

For the connected twisted invariant there is one extra point: transitivity is imposed before adjoining the involution $\tau$.  Its effect is summarized by the following exponential decomposition.

\begin{lemma}\label{lem:twisted-connected-series}
Let
\[
\mathscr U(q;\hbar):=\sum_{d\geq1}\sum_{r\geq0} \widetilde h_{d,r+1}q^d\frac{\hbar^r}{r!}.
\]
Then
\begin{equation}\label{eq:twisted-connected-series}
\mathscr U(q;\hbar) =\mathscr C_1(q;2\hbar)-\frac12\mathscr C_0(q;2\hbar).
\end{equation}
\end{lemma}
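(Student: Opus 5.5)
We treat the disconnected series first and then take a logarithm.

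\emph{Disconnected series.} By Lemma~\ref{lem:twisted-matching-trace}, $\widetilde h^\bullet_{d,r+1}=2^r\Hb_{1,1}(d,r)$. Hence the disconnected twisted series
\[
\mathscr U^\bullet(q;\hbar):=\sum_{d,r}\widetilde h^\bullet_{d,r+1}q^d\hbar^r/r!
\]
equals $\mathscr H_1(q;2\hbar)=\exp\mathscr C_1(q;2\hbar)$. The task is therefore to relate $\mathscr U$ to $\log\mathscr U^\bullet$. The naive guess $\mathscr U=\log\mathscr U^\bullet$ is false, because Hahn--Markwig's connectedness is transitivity of the group $G$ generated by $\sigma,\eta_s,\tau\eta_s\tau,\beta$. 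This is transitivity on $\{1,\ldots,2d\}$ \emph{without} $\tau$, whereas the exponential formula naturally decomposes according to the orbits of $\langle G,\tau\rangle$.

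\emph{Orbit types.} Every orbit of $\langle G,\tau\rangle$ is either a single $G$-orbit stable under $\tau$, or a union $O\sqcup\tau O$ of two distinct $G$-orbits exchanged by $\tau$. This matches the geometric dichotomy behind \eqref{eq:connected-orientation-double-count}: a connected quotient source $\Sigma$ has either connected orientation double (nonorientable $\Sigma$) or disconnected double (orientable $\Sigma$).

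\emph{Step 1.} I would first show that the monodromy count refined by $\langle G,\tau\rangle$-orbits obeys the exponential formula. Restricting a tuple to a $\tau$-stable block $S\subset\{1,\ldots,2d\}$ of size $2d_S$ gives a tuple of the same kind in degree $d_S$. The normalization $(2d)!!^{-1}$ factors as $\prod_S (2d_S)!!^{-1}$ times the number $(2d)!!/\prod_S(2d_S)!!$ of ways to distribute the $\tau$-pairs among the blocks. The labelled transpositions are distributed among blocks by a multinomial, which is absorbed by $\hbar^r/r!$. Consequently $\log\mathscr U^\bullet=\mathscr C_1(q;2\hbar)$ is the generating series of tuples that are $\langle G,\tau\rangle$-transitive. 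This is the analogue, at the level of monodromy, of Corollary~\ref{cor:connected-quotient-covers} at $b=1$.

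\emph{Step 2.} Split these $\langle G,\tau\rangle$-transitive tuples into $G$-transitive ones, which contribute $\mathscr U$, and those with two $\tau$-exchanged $G$-orbits $O,\tau O$. In the second case $|O|=d$, and $O$ contains exactly one element of each $\tau$-pair. I must show this case contributes exactly $\tfrac12\mathscr C_0(q;2\hbar)$.

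Choosing which of the two orbits is called $O$ gives a factor $1/2$ relative to tuples with a marked orbit. Given a marked $O$, restricting to $O$ and identifying $O\cong\{1,\ldots,d\}$ through the $\tau$-pairs makes the data determine
\begin{itemize}
\item $\sigma|_O\in\mathfrak S_d$, with $\sigma$ on $\tau O$ forced to be $\tau\sigma^{-1}\tau$;
\item each $\eta_s$ acting inside $O$ or inside $\tau O$: each choice is a transposition of $\mathfrak S_d$ in one of two positions, giving a factor $2^r$, i.e.\ $\hbar\mapsto2\hbar$;
\item $\beta\in B_d$ preserving $O$, i.e.\ a $\beta|_O\in\mathfrak S_d$ (the $\beta$ swapping $O$ and $\tau O$ are accounted for by the marking);
\item a relation \eqref{eq:twisted-monodromy-relation} that reduces to an ordinary torus relation of the form $\prod\eta'_s\,\sigma'=\beta'\sigma'\beta'^{-1}$, up to reordering.
\end{itemize}
Counting the sets $O$ (there are $2^d$) against $(2d)!!=2^dd!$ leaves the usual $1/d!$ normalization. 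The result is the connected ordinary elliptic Hurwitz count with $r$ simple branch points, i.e.\ $\Hb^\circ_{1,0}(d,r)$ by the $b=0$ specialization of Proposition~\ref{prop:elliptic-weight-properties} together with the remark following Corollary~\ref{cor:connected-quotient-covers}. Including $\hbar\mapsto2\hbar$, this yields $\tfrac12\mathscr C_0(q;2\hbar)$.

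\emph{Main obstacle.} I expect the hardest part to be the precise bookkeeping in Step 2. Specifically, one must check that reordering the $\eta_s$ split between $O$ and $\tau O$ into a single ordinary product is a bijection on counts. One must also check that the $1/2$ factor and the powers of $2$ are exact, with no overcount from $\beta$ exchanging the two orbits. I would verify the constants on the case $d=1$, $r=0$, where $\mathscr C_1$ and $\mathscr C_0$ both have $q$-coefficient $1$ and the claim predicts $\widetilde h_{1,1}=1/2$.
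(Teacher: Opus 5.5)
Your proposal is correct and follows essentially the paper's argument: the exponential formula over $\langle G,\tau\rangle$-orbits identifies $\mathscr C_1(q;2\hbar)$ with the $\langle G,\tau\rangle$-transitive tuples, the $G$-transitive ones give $\mathscr U$, and the $\tau$-exchanged pairs give $\tfrac12\mathscr C_0(q;2\hbar)$, with a factor $1/2$ from marking one orbit and $2^r$ from recording the side of each transposition. The reordering you flag as the main obstacle is handled in the paper by Hurwitz moves; for the count alone it suffices to conjugate the $\tau O$-side transpositions by $\sigma|_O$ and re-index the factors, which preserves both the product and the generated group. Also note that $\beta\in G$ always preserves $O$, so no orbit-swapping $\beta$ needs to be accounted for.
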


\begin{proof}
For a twisted elliptic monodromy tuple, let $G$ be the subgroup generated by $\sigma,\beta$, the $\eta_s$, and their $\tau$-conjugates.  Since $\tau G\tau=G$, the involution $\tau$ permutes the $G$-orbits.  An orbit of $\langle G,\tau\rangle$ is therefore either a single $\tau$-stable $G$-orbit, giving a connected twisted elliptic component, or a pair of $G$-orbits exchanged by $\tau$.  After marking one orbit in such a pair, restriction gives an ordinary connected elliptic Hurwitz monodromy tuple, while each of the $r$ branch labels records on which member of the pair its transposition lies.  Thus a paired component contributes $2^r$ times the ordinary connected elliptic Hurwitz weight, and forgetting the marked member divides its groupoid cardinality by $2$.

The exponential formula for the decomposition into $\langle G,\tau\rangle$-orbits gives
\[
\sum_{d,r\geq0}\widetilde h_{d,r+1}^{\bullet}q^d\frac{\hbar^r}{r!} =\exp\left(\mathscr U(q;\hbar)+\frac12\mathscr C_0(q;2\hbar)\right).
\]
By Lemma~\ref{lem:twisted-matching-trace}, the left-hand side is $\mathscr H_1(q;2\hbar)$.  Taking logarithms proves \eqref{eq:twisted-connected-series}.  For completeness, on a paired orbit choose one member $\Omega$.  For each $s$, exactly one of $\eta_s$ and $\tau\eta_s\tau$ acts nontrivially on $\Omega$; transport the other factor back by $\tau$ and conjugate it through $\sigma$.  If this changes the branch order, restore it by the Hurwitz move $(u,v)\mapsto(v,vuv)$, which preserves both the product and the generated subgroup.  Conversely, fixing the inverse braid for each binary side pattern reverses the construction and extends the seam and transverse monodromies by $\tau\sigma\tau=\sigma^{-1}$ and $\tau\beta\tau=\beta$.  Hence a marked ordinary component has exactly $2^r$ lifts, while forgetting the marking gives the factor $1/2$.
\end{proof}

\begin{proposition}\label{prop:twisted-real-monodromy}
For $d\geq0$ and $r\geq0$,
\begin{equation}\label{eq:twisted-comparison}
\widetilde h_{d,r+1}^{\bullet} =2^r\Hb_{1,1}(d,r) =2^r\sum_{[F]\in\mathfrak H_{d,r}(E_T,\iota;\mathbf x)}\frac1{|\Aut(F)|}.
\end{equation}
For $d\geq1$,
\begin{equation}\label{eq:twisted-connected-geometric}
\widetilde h_{d,r+1}=2^r\sum_{\substack{[F]\in\mathfrak H_{d,r}(E_T,\iota;\mathbf x)\\
\widetilde\Sigma\text{ connected}}}\frac1{|\Aut(F)|},
\end{equation}
and the connected orientation double $\widetilde\Sigma$ has genus $r+1$.
\end{proposition}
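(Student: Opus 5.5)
The proof assembles earlier results; no new computation is needed. For the disconnected identity \eqref{eq:twisted-comparison}, the first equality is exactly Lemma~\ref{lem:twisted-matching-trace}. For the second, apply Theorem~\ref{thm:geometric-gluing} at $b=1$, which gives $\Hb_{1,1}(d,r)=\#_{1,a}\mathfrak H_{d,r}(E_T,\iota;\mathbf x)$. By Proposition~\ref{prop:elliptic-weight-properties} we have $w_{1,a}(F)=1$ for every cover, so the weighted cardinality \eqref{eq:real-weighted-cardinality} collapses to $\sum_{[F]}1/|\Aut(F)|$. The case $d=0$ follows from the conventions: only the empty tuple and the empty cover contribute, and only when $r=0$. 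The integrality assumption on the MON is harmless, since the sum is independent of the coherent MON.

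For the connected statement \eqref{eq:twisted-connected-geometric}, I would compare generating series. Lemma~\ref{lem:twisted-connected-series} gives $\mathscr U(q;\hbar)=\mathscr C_1(q;2\hbar)-\tfrac12\mathscr C_0(q;2\hbar)$. Extracting $r![q^d\hbar^r]$ yields
\[
\widetilde h_{d,r+1}=2^r\left(\Hb^\circ_{1,1}(d,r)-\tfrac12\Hb^\circ_{1,0}(d,r)\right).
\]
By \eqref{eq:connected-orientation-double-count}, the bracket equals the automorphism-weighted count of covers with connected $\widetilde\Sigma$. That identity itself rests on Corollary~\ref{cor:connected-quotient-covers} at $b=1$ and $b=0$, together with the specializations \eqref{eq:elliptic-weight-specializations}.

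The main point to check is this last step. A connected quotient source splits into two cases:
\begin{itemize}
\item if it is nonorientable, $\widetilde\Sigma$ is connected;
\item if it is orientable, $\widetilde\Sigma$ is two copies, which are exactly the objects counted by $\tfrac12\Hb^\circ_{1,0}$.
\end{itemize}
For the orientable case I need the $b=0$ weight $2^{c(F)}=2$ for a connected orientable source, so that $\tfrac12\Hb^\circ_{1,0}(d,r)$ counts connected orientable quotient covers with weight $1/|\Aut(F)|$. I would verify this coefficient directly rather than reuse the heuristic remark.

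Finally, the genus claim comes from \eqref{eq:real-RH}: when $\widetilde\Sigma$ is connected and orientable, $\chi(\widetilde\Sigma)=2-2g=-2r$, hence $g=r+1$.
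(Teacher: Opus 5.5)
Your proposal is correct and follows the paper's proof essentially step for step. You obtain the disconnected identity from Lemma~\ref{lem:twisted-matching-trace} together with Theorem~\ref{thm:geometric-gluing} at $b=1$ and $w_{1,a}=1$. You obtain the connected identity by extracting coefficients in Lemma~\ref{lem:twisted-connected-series} and then applying \eqref{eq:connected-orientation-double-count}, and the genus from \eqref{eq:real-RH}. Two of your additions tighten points the paper passes over quickly. You note explicitly that an integral MON may be chosen because the count is MON-independent. You also derive the factor $\tfrac12$ directly from $w_{0,a}(F)=2^{c(F)}\one_{\{\Sigma\text{ orientable}\}}$ via Corollary~\ref{cor:connected-quotient-covers}, rather than from the orientation-marking heuristic.
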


\begin{proof}
At $b=1$, Proposition~\ref{prop:elliptic-weight-properties} gives $w_{1,a}(F)=1$.  Theorem~\ref{thm:geometric-gluing} together with Lemma~\ref{lem:twisted-matching-trace} yields \eqref{eq:twisted-comparison}.  Taking coefficients in \eqref{eq:twisted-connected-series} gives
\[
\widetilde h_{d,r+1} =2^r\left(\Hb^\circ_{1,1}(d,r)-\frac12\Hb^\circ_{1,0}(d,r)\right),
\]
which is \eqref{eq:twisted-connected-geometric} by \eqref{eq:connected-orientation-double-count}.  Equation~\eqref{eq:real-RH} then gives $g=r+1$.

Finally, the Riemann existence theorem equips the oriented cover $\widetilde F:\widetilde\Sigma\to E_T$ with the unique complex structure making $\widetilde F$ holomorphic; equivariance makes $\jmath$ antiholomorphic.  Thus these are geometric counts of real covers in the sense proposed in \cite[Remark~5]{HahnMarkwig26}.  The factor $2^r$ comes from the matching-operator normalization, not from independently choosing the reflected transposition in a fixed tuple.
\end{proof}

\section{Elliptic $b$-Hurwitz expansion of the Jack heat trace}\label{sec:heat-trace-expansion}

We now combine Sections~\ref{sec:jack-heat-trace} and~\ref{sec:elliptic-b-hurwitz}, specializing to $\alpha=1+b\geq1$, where the coefficients have their enumerative interpretation.  The exact finite-rank trace factorizes into two length-truncated elliptic $b$-Hurwitz sectors coupled by the determinant charge; removing the cutoffs gives the formal amplitude whose fixed truncations reproduce the large-rank expansion.

\subsection{Exact finite-rank factorization}\label{subsec:finite-rank-factorization}
In the stable coordinates of Section~\ref{subsec:stable-representation}, the two partitions $\mu$ and $\nu$ enter symmetrically, while the determinant charge tilts their degree weights in opposite directions.  We call the corresponding factors the chiral and antichiral sectors; their finite-rank length cutoffs make the series below convergent, whereas the unrestricted stable sectors are formal in the content variable.

Set $x_N=(N+b)^{-1}$ and retain the notation $q_t=e^{-t/2}$ from Section~\ref{subsec:probabilistic-representation}.  Abbreviate the stable correction at Jack parameter $1+b$ by
\begin{equation}\label{eq:Fb}
F_b(\mu,\nu,m) :=\cA_{1+b}(\mu)+\cA_{1+b}(\nu) +(1+b)m(|\mu|-|\nu|) -\frac{b(1+b)}{2}m^2.
\end{equation}
Throughout this section, $M$ has the discrete-Gaussian law \eqref{eq:discrete-Gaussian} with $\alpha=1+b$; when used jointly with $\mu,\nu$, the latter are independent $q_t$-uniform partitions and are independent of $M$.
For $L\geq0$, introduce the length-truncated counterpart of \eqref{eq:elliptic-master-spectral},
\begin{equation}\label{eq:finite-Hurwitz-sector}
\mathscr H_{b,L}(z;\hbar):=\sum_{\substack{\lambda\in\Pcal\\ \ellpart(\lambda)\leq L}}z^{|\lambda|}e^{\hbar\cA_{1+b}(\lambda)}.
\end{equation}
Thus $L$ records the finite-rank cutoff, while formally removing it gives $\mathscr H_b(z;\hbar)$.  For $L=0$ the sum consists only of the empty partition.

For fixed $b\geq0$, $L\geq0$, $z>0$, and $x>0$, the specialization $\mathscr H_{b,L}(z;-x)$ converges absolutely.  Indeed, for $d=|\lambda|$ and $\ellpart(\lambda)\leq L$, \eqref{eq:K-sharp-lower} and $\cA_{1+b}(\lambda)\geq K(\lambda)$ give
\[
\cA_{1+b}(\lambda)\geq \frac{d^2}{2L}-\frac{Ld}{2}
\]
when $L\geq1$; hence the degree-$d$ contribution is bounded by
$p(d)z^d\exp(-xd^2/(2L)+xLd/2)$ and is summable, where $p(d)$ is the partition number introduced in Section~\ref{subsec:stable-representation}.  The case $L=0$ is trivial.  The main result of this subsection is that, conditioned on the determinant charge $M$, the finite-rank Jack-deformed central heat trace is the product of a chiral and an antichiral length-truncated Jack/Hurwitz sector.

\begin{proposition}\label{prop:finite-N-chiral-factorization}
For every $N\geq1$, $b\geq0$, and $t>0$,
\begin{equation}\label{eq:finite-N-chiral-factorization}
\begin{aligned}
Z_{1+b,N}(t)={}&\Theta_{1+b}(t)\E_M\Big[e^{\frac12 b(1+b)t x_NM^2}\\
&\qquad\qquad\times\mathscr H_{b,A_N}\left(q_t e^{-(1+b)Mt x_N};-t x_N\right)\mathscr H_{b,B_N}\left(q_t e^{(1+b)Mt x_N};-t x_N\right)\Big].
\end{aligned}
\end{equation}
\end{proposition}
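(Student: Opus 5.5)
The plan is to start from the exact sum over signatures rather than from the probabilistic form. The bijection $\lambda_N$ of Section~\ref{subsec:stable-representation} and Proposition~\ref{prop:stable-decomposition} with $\alpha=1+b$ give $D=N+\alpha-1=N+b$, so $2/D=2x_N$. Hence
\[
Z_{1+b,N}(t)=\sum_{m\in\Z}\sum_{\substack{\mu\in\Pcal_{\leq A_N}\\ \nu\in\Pcal_{\leq B_N}}}e^{-\frac t2 E_{1+b}(\mu,\nu,m)-t x_N F_b(\mu,\nu,m)},
\]
where $F_b$ is exactly $F_{1+b}$ from \eqref{eq:Ealpha}, rewritten as in \eqref{eq:Fb}. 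All terms are positive, and the total is the finite trace \eqref{eq:Z-def}. So Tonelli lets us reorder the sum freely: first sum over $m$, then over $(\mu,\nu)$.

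Next, for fixed $m$, expand the exponent and group it into pieces. The factor $e^{-\frac t2(1+b)m^2}$ becomes $\Theta_{1+b}(t)\Pbb(M=m)$. The term $-t x_N\cdot(-\tfrac{b(1+b)}2m^2)$ gives the prefactor $e^{\frac12 b(1+b)t x_N m^2}$. The remaining $\mu$-dependent part is
\[
q_t^{|\mu|}e^{-t x_N(1+b)m|\mu|}e^{-t x_N\cA_{1+b}(\mu)}=\bigl(q_te^{-(1+b)mtx_N}\bigr)^{|\mu|}e^{(-tx_N)\cA_{1+b}(\mu)}.
\]
The $\nu$-part is the same with the sign of $m$ reversed. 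The cutoffs $\ellpart(\mu)\leq A_N$ and $\ellpart(\nu)\leq B_N$ factor independently. Summing over $\mu$ and $\nu$ therefore gives exactly $\mathscr H_{b,A_N}(q_te^{-(1+b)mtx_N};-tx_N)\,\mathscr H_{b,B_N}(q_te^{(1+b)mtx_N};-tx_N)$, by the definition \eqref{eq:finite-Hurwitz-sector}. Summing over $m$ against $\Pbb(M=m)$ then yields \eqref{eq:finite-N-chiral-factorization}.

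The only nontrivial point is justifying the manipulations, i.e.\ checking that each sector converges for every $m$, including large $|m|$ where the argument $z=q_te^{\mp(1+b)mtx_N}$ may exceed $1$. The convergence remark preceding the proposition covers any $z>0$ and $x>0$, since the Gaussian decay $e^{-xd^2/(2L)}$ in the length-truncated sector beats $z^d$. This makes each factor finite. Since everything is nonnegative, the equality of the iterated sum with the original finite trace follows from Tonelli without any further estimate. I expect no real obstacle beyond this bookkeeping; the step most likely to hide an error is the sign in the $m(|\mu|-|\nu|)$ term, which fixes which sector receives $e^{-(1+b)Mtx_N}$.
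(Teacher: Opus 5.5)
Your proposal is correct and follows the paper's proof. You expand the trace via the bijection $\lambda_N$ and Proposition~\ref{prop:stable-decomposition} at $\alpha=1+b$, split $e^{-tx_NF_b}$ into $\mu$-, $\nu$- and charge-dependent factors with the sign placement you checked, factor the two truncated partition sums, and rewrite the charge sum through the discrete Gaussian law. Your Tonelli justification (nonnegative terms, finite total) is a slightly more careful version of the paper's appeal to the sector convergence estimate.
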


\begin{proof}
Specializing the stable-coordinate bijection $\lambda_N$ and Proposition~\ref{prop:stable-decomposition} at Jack parameter $1+b$ gives
\begin{align*}
Z_{1+b,N}(t)&=\sum_{m\in\Z}\sum_{\substack{\ellpart(\mu)\leq A_N\\ \ellpart(\nu)\leq B_N}}q_t^{|\mu|+|\nu|}e^{-\frac t2(1+b)m^2}e^{-t x_NF_b(\mu,\nu,m)}.
\end{align*}
By \eqref{eq:Fb},
\begin{align*}
e^{-t x_NF_b(\mu,\nu,m)}={}&e^{-t x_N\cA_{1+b}(\mu)}e^{-(1+b)mt x_N|\mu|}e^{-t x_N\cA_{1+b}(\nu)}e^{(1+b)mt x_N|\nu|}e^{\frac12b(1+b)t x_Nm^2}.
\end{align*}
The sums over $\mu$ and $\nu$ therefore factor, and \eqref{eq:discrete-Gaussian} converts the remaining charge sum into the expectation in \eqref{eq:finite-N-chiral-factorization}.  The individual partition sums converge by the convergence estimate preceding the proposition.
\end{proof}

\subsection{Chiral--antichiral generating series and controlled expansion}\label{subsec:controlled-expansion}
We now remove the length cutoffs in Proposition~\ref{prop:finite-N-chiral-factorization} and work formally in the content variable.  The determinant charge still shifts the two degree variables in opposite directions, while $x$ records the inverse-rank parameter $1/(N+b)$.

Let $\mathfrak{D}_q:=q\frac{\dd}{\dd q}$.  For a formal variable $u$, we interpret a shifted argument by
\begin{equation}\label{eq:formal-q-shift}
\mathscr H_b(qe^u;\hbar) :=e^{u\mathfrak{D}_q}\mathscr H_b(q;\hbar).
\end{equation}
Thus a partition of degree $d$ acquires the factor $e^{ud}$; no analytic meaning of the shifted series away from $u=0$ is required.  For fixed $t>0$ and $b\geq0$, define the formal chiral--antichiral generating series
\begin{equation}\label{eq:Hurwitz-master-Z}
\begin{aligned}
\mathcal Z_{b,t}(x):={}&\Theta_{1+b}(t)\E_M\Big[e^{\frac12 b(1+b)txM^2}\\ &\qquad\qquad\times\mathscr H_b(q_t e^{-(1+b)tMx};-tx)\mathscr H_b(q_t e^{(1+b)tMx};-tx)\Big].
\end{aligned}
\end{equation}
The finite-rank factorization \eqref{eq:finite-N-chiral-factorization} and the generating series \eqref{eq:Hurwitz-master-Z} have the same form, the former being obtained by imposing the two length cutoffs and specializing $x=x_N$.

If $\mu,\nu$ are independent $q_t$-uniform partitions, independent of $M$, then \eqref{eq:elliptic-master-spectral} and \eqref{eq:Fb} give the equivalent joint-expectation form
\begin{equation}\label{eq:Hurwitz-master-expectation}
\mathcal Z_{b,t}(x) =\frac{\Theta_{1+b}(t)}{\phi(q_t)^2} \E\left[e^{-txF_b(\mu,\nu,M)}\right],
\end{equation}
where the expectation is understood coefficientwise as a formal power series in $x$.  Every coefficient is therefore well defined and
\begin{equation}\label{eq:master-coefficients}
[x^r]\mathcal Z_{b,t}(x) =\frac{\Theta_{1+b}(t)}{\phi(q_t)^2} \frac{(-t)^r}{r!} \E\left[F_b(\mu,\nu,M)^r\right], \qquad r\geq0.
\end{equation}
The required moments are finite by the elementary bounds \eqref{eq:q-tail} and \eqref{eq:gaussian-moments}.

\begin{proposition}\label{prop:master-zero-radius}
Write $a_m(t,b)=[x^m]\mathcal Z_{b,t}(x)$.  For every compact $K\subset(0,\infty)\times[0,\infty)$ there are constants $A_K,B_K,c_K>0$ and an integer $r_K$ such that, for $(t,b)\in K$,
\begin{equation}\label{eq:gevrey-coefficient-bounds}
|a_m(t,b)|\leq A_K B_K^m m!\quad(m\geq0), \qquad a_{2r}(t,b)\geq c_K^{2r}(2r)!\quad(r\geq r_K).
\end{equation}
In particular, for every fixed $t>0$ and $b\geq0$, the coefficient sequence has exact Gevrey order one and $\mathcal Z_{b,t}(x)$ has radius of convergence zero.
\end{proposition}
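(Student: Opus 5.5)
The plan is to work entirely from the coefficient formula \eqref{eq:master-coefficients},
\[
a_m(t,b)=\frac{\Theta_{1+b}(t)}{\phi(q_t)^2}\frac{(-t)^m}{m!}\E\bigl[F_b(\mu,\nu,M)^m\bigr],
\]
and to bound the moments of $F_b$ from above and below. On a compact $K$ the prefactor $\Theta_{1+b}(t)/\phi(q_t)^2$ is bounded above and below by positive constants, and $t$ ranges in a compact subinterval of $(0,\infty)$. Since $\alpha=1+b\in[1,\alpha_1]$, the parameter $\alpha$ stays bounded. Both parts therefore reduce to estimating $\E|F_b|^m$ uniformly on $K$.

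\emph{Upper bound.} By \eqref{eq:content-crude} and \eqref{eq:Fb}, with $S=|\mu|+|\nu|$, we have $|F_b|\leq C(S^2+M^2)$ on $K$, using $|M|S\leq (S^2+M^2)/2$. Hence
\[
\E|F_b|^m\leq C^m2^m\bigl(\E[S^{2m}]+\E[M^{2m}]\bigr).
\]
Under the product $q_t$-uniform law, $\Pbb(S=n)\leq C\rho^n$ for some $\rho<1$ uniform on $K$, as in \eqref{eq:q-tail}. This gives $\E[S^{2m}]\leq C\sum_n n^{2m}\rho^n\leq C'(2m)!\,c^{-2m}$ with $c=-\log\rho$. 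The discrete Gaussian gives $\E[M^{2m}]\leq C\,m!\,B^m$, which is of smaller order. Using $(2m)!\leq 4^m(m!)^2$ and dividing by $m!$ yields $|a_m|\leq A_KB_K^m\,m!$.

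\emph{Lower bound for even coefficients.} For $m=2r$ every term $F_b^{2r}$ is nonnegative and $(-t)^{2r}>0$, so one may discard all configurations except a single family. I would keep the configurations $M=0$, $\nu=\varnothing$ and $\mu=(d)$ a single row, for one well-chosen $d$. Then $F_b=\cA_{1+b}((d))=(1+b)\binom d2\geq d^2/4$ for $d\geq2$. This family has probability $\phi(q_t)^2q_t^d\,\Pbb(M=0)$, and $\Pbb(M=0)\geq 1/\Theta_{1+b}(t)$, which is bounded below on $K$. Therefore
\[
a_{2r}\geq c\,\frac{t^{2r}}{(2r)!}\,4^{-2r}d^{4r}e^{-td/2}.
\]
Choosing $d=\lfloor 8r/t\rfloor$ (the maximizer) gives $d^{4r}e^{-td/2}\geq (C r/t)^{4r}e^{-4r}\cdot e^{-O(1)}$. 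By Stirling, $r^{4r}/(2r)!\geq (c'r)^{2r}\geq c''^{2r}(2r)!$. Collecting the constants, which are uniform on $K$ since $t$ is bounded above and below, gives $a_{2r}\geq c_K^{2r}(2r)!$ for $r\geq r_K$, where $r_K$ ensures $d\geq2$.

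The two bounds together give exact Gevrey order one. Since $|a_{2r}|^{1/(2r)}\to\infty$, the radius of convergence is zero. The main point to handle carefully is the lower bound: positivity of even moments is what allows a single extremal configuration to be isolated, and the only delicacy is tracking that the optimized $d$ and the Stirling constants stay uniform on $K$. This holds because $b\geq0$ keeps $\alpha\geq1$ and $t$ is bounded away from $0$ and $\infty$.
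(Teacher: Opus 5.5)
Your proposal is correct and follows the paper's proof: the lower bound restricts to the same event $\{M=0,\ \nu=\varnothing,\ \mu=(d)\}$ with the same choice $d=\lfloor 8r/t\rfloor$. The upper bound differs only cosmetically, since you bound $|F_b|\leq C(S^2+M^2)$ and estimate the moments of $S$ and $M$ separately, whereas the paper uses $|F_b|\leq C_KR^2$ with $R=1+|\mu|+|\nu|+M^2$ together with the exponential-moment bound $R^{2m}\leq(2m)!\,\delta^{-2m}e^{\delta R}$.
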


\begin{proof}
Fix $K$ and choose $0<t_0\leq t\leq t_1$ and $0\leq b\leq b_1$ on $K$.  For $R=1+|\mu|+|\nu|+M^2$, the product partition and discrete-Gaussian laws have a uniformly finite exponential moment $\sup_K\E[e^{\delta R}]<\infty$ for sufficiently small $\delta>0$.  This follows from Euler's partition product and the Gaussian sum, since $\delta<t_0/2$ leaves a strictly negative coefficient of each size and of $M^2$.  The content bound gives $|F_b|\leq C_K R^2$.  Using $R^{2m}\leq(2m)!\delta^{-2m}e^{\delta R}$ in \eqref{eq:master-coefficients}, and bounding the prefactor $\Theta_{1+b}(t)/\phi(q_t)^2$ on $K$, we obtain
\[
|a_m(t,b)|\leq A_K C_K^m\frac{(2m)!}{m!} \leq A_K(4C_K)^m m!.
\]
This proves the upper bound after renaming the constant.

By \eqref{eq:master-coefficients}, the even coefficients are nonnegative.  Restricting the expectation defining $a_{2r}$ to the event $\{M=0,\ \nu=\varnothing,\ \mu=(d)\}$ and using
\[
\cA_{1+b}((d))=\frac{1+b}{2}d(d-1),
\]
gives, for every $d\geq2$,
\begin{equation}\label{eq:master-zero-radius-lower}
a_{2r}\geq \frac{q_t^d}{(2r)!} \left(\frac{t(1+b)}{2}d(d-1)\right)^{2r}.
\end{equation}
Indeed, the factors $\phi(q_t)^2\Theta_{1+b}(t)^{-1}$ coming from the probability of this event cancel the prefactor in \eqref{eq:master-coefficients}.  For all sufficiently large $r$, choose $d_r=\lfloor 8r/t\rfloor$.  Then $q_t^{d_r}\geq e^{-4r}$ and $d_r(d_r-1)\geq 8r^2/t^2$.  Since $(2r)!\leq(2r)^{2r}$, \eqref{eq:master-zero-radius-lower} yields
\[
a_{2r}^{1/(2r)} \geq e^{-2}\frac{2(1+b)}{t} r,
\]
for all sufficiently large $r$, with the threshold uniform for $t\in[t_0,t_1]$.  Hence
\[
a_{2r}(t,b)\geq\left(\frac{2e^{-2}r}{t_1}\right)^{2r} \geq\left(\frac{e^{-2}}{t_1}\right)^{2r}(2r)!,
\]
which proves the lower bound.  Stirling's formula excludes every coefficient bound of Gevrey order $s<1$.  The same lower bound gives $\limsup_m|a_m|^{1/m}=\infty$, so the radius of convergence is zero.
\end{proof}

The proposition concerns growth of the coefficients.  The controlled expansion below is uniform in the parameters at every fixed truncation order; it does not assert a remainder estimate uniform in that order or a Borel summation theorem.

\begin{theorem}\label{thm:controlled-chiral-antichiral}
For every compact set $K\subset(0,\infty)\times[0,\infty)$ and every $p\in\Z_{\geq0}$, there exist $C_{K,p}<\infty$ and $N_{K,p}\geq1$ such that, for every $N\geq N_{K,p}$,
\begin{equation}\label{eq:controlled-master-uniform}
\sup_{(t,b)\in K} (N+b)^{p+1} \left| Z_{1+b,N}(t) -\bigl[\mathcal Z_{b,t}\bigr]_{\leq p}\left(\frac{1}{N+b}\right) \right| \leq C_{K,p}.
\end{equation}
\end{theorem}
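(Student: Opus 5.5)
The plan is to reduce the statement to Theorem~\ref{thm:heat-expansion} specialized at $\alpha=1+b$, and to check that the constants there are uniform on compact sets of the form $K\subset(0,\infty)\times[0,\infty)$. In $\alpha$-coordinates such a $K$ becomes a compact subset of $(0,\infty)\times[1,\infty)$, so it lies inside $(0,\infty)^2$.

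First, at $\alpha=1+b$ we have $N+\alpha-1=N+b$, and $F_\alpha=F_b$ by \eqref{eq:Ealpha} and \eqref{eq:Fb}. By \eqref{eq:master-coefficients}, the coefficient of $x^r$ in $\mathcal Z_{b,t}(x)$ is exactly
\[
\frac{\Theta_{1+b}(t)}{\phi(q_t)^2}\frac{(-t)^r}{r!}\E\left[F_b(\mu,\nu,M)^r\right].
\]
This is the $r$-th term of \eqref{eq:heat-expansion}. Hence $\bigl[\mathcal Z_{b,t}\bigr]_{\leq p}(1/(N+b))$ coincides with the truncated sum in Theorem~\ref{thm:heat-expansion}, and the difference in \eqref{eq:controlled-master-uniform} is precisely the remainder in that theorem. (Equivalently, one could begin from Proposition~\ref{prop:finite-N-chiral-factorization} and remove the length cutoffs. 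That route is the same argument in a different presentation, and I will use the probabilistic one.)

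The second step, and the only real content, is to verify that the proof of Theorem~\ref{thm:heat-expansion} gives constants uniform in $(t,b)\in K$ together with an explicit threshold $N_{K,p}$. I will go through its ingredients in order.

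\begin{itemize}
\item The coercivity bound \eqref{eq:Jack-lower} involves $\widetilde\alpha=1/(1+b)\geq 1/(1+b_1)$, which is bounded below on $K$. So Lemma~\ref{lem:trace-tail} applies uniformly and gives $O(e^{-cN^{1/4}})$.
\item The inclusion $T_N\subset\Omega_N$ holds for $N$ beyond a threshold that does not depend on the parameters.
\item The control \eqref{eq:F-exp-control} requires $D=N+b\asymp N$ uniformly, which holds since $b\leq b_1$. It also requires the coefficient $b(1+b)/(2D)$ of $M^2$ to be at most $(1+b)/8$. This holds once $N\geq 4b_1$, and beyond a further threshold depending only on $K$ after Young's inequality is applied.
\item The tail bounds \eqref{eq:q-tail} and \eqref{eq:gaussian-moments} are locally uniform, because $q_t$ ranges over a compact subset of $(0,1)$ and $t(1+b)\geq t_0$.
\item The prefactor $\Theta_{1+b}(t)/\phi(q_t)^2$ is bounded on $K$.
\end{itemize}

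Taking the maximum of these thresholds gives $N_{K,p}$. The resulting remainder is $C_{K,p}D^{-p-1}$, with the moment bounds uniform by the same tail estimates. This gives the claimed supremum bound.

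The main obstacle I expect is the uniform version of \eqref{eq:F-exp-control}. One must make sure that the positive term $\frac{b(1+b)}{2D}tM^2$, together with the cross term $(1+b)tM(|\mu|-|\nu|)/D$, is absorbed by the Gaussian weight $e^{-t(1+b)M^2/2}$ with a margin independent of $(t,b)\in K$, while keeping $S\leq N^{1/4}$. I would handle this directly: bound the cross term by $\varepsilon(1+b)M^2+C_\varepsilon(1+b)S^2/D^2$ with $\varepsilon=1/8$, and note that $S^2/D^2\leq N^{-3/2}$ is bounded. Since the factor $(1+b)$ appears in every term and $b\leq b_1$, the required margin is then uniform.
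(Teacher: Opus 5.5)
Your proposal is correct and follows the paper's proof: you identify $\bigl[\mathcal Z_{b,t}\bigr]_{\leq p}(1/(N+b))$ with the truncated sum of Theorem~\ref{thm:heat-expansion} at $\alpha=1+b$ via \eqref{eq:master-coefficients}, then invoke that theorem's local uniformity, using that $K$ becomes a compact subset of $(0,\infty)^2$ in $(t,\alpha)$-coordinates. Your explicit re-check of the thresholds and of \eqref{eq:F-exp-control} only unpacks what the paper states as ``locally uniform''.
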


\begin{proof}
By the coefficient identity \eqref{eq:master-coefficients},
\begin{align*}
\bigl[\mathcal Z_{b,t}\bigr]_{\leq p}\left(\frac{1}{N+b}\right)&=\frac{\Theta_{1+b}(t)}{\phi(q_t)^2}\sum_{r=0}^p\frac{(-t)^r}{r!(N+b)^r}\E\left[F_b(\mu,\nu,M)^r\right].
\end{align*}
This is exactly Theorem~\ref{thm:heat-expansion} specialized to Jack parameter $1+b$.  The remainder and its local uniformity therefore give \eqref{eq:controlled-master-expansion}--\eqref{eq:controlled-master-uniform}.
\end{proof}

\bibliographystyle{alpha}
\bibliography{Elliptic-b-Hurwitz}

\end{document}